\newtheorem{theorem}{Theorem}[section]
\newtheorem{proposition}[theorem]{Proposition}
\newtheorem{lemma}[theorem]{Lemma}
\newtheorem{remark}[theorem]{Remark}
\newtheorem{example}[theorem]{Example}
\newcommand{\1}{{\bm 1}}
\numberwithin{equation}{section}
\begin{document}

\begin{center}
\large\bf
Quadratic Embedding Constants of Corona Graphs
\end{center}

\bigskip

\begin{center}
Ferdi \\
Faculty of Mathematics and Natural Sciences \\
Institut Teknologi Bandung, Bandung, Indonesia \\
ferdimath123@gmail.com
\end{center}

\begin{center}
Edy Tri Baskoro \\
Combinatorial Mathematics Research Group \\
Faculty of Mathematics and Natural Sciences \\
Institut Teknologi Bandung, Indonesia \\
and \\
Center for Research Collaboration \\
on Graph Theory and Combinatorics, Indonesia \\
ebaskoro@itb.ac.id
\end{center}

\begin{center}
Nobuaki Obata\\
Center for Data-driven Science and Artificial Intelligence \\
Tohoku University\\
Sendai 980-8576 Japan \\
obata@tohoku.ac.jp
\end{center}

\begin{center}
Aditya Purwa Santika \\
Combinatorial Mathematics Research Group \\
Faculty of Mathematics and Natural Sciences \\
Institut Teknologi Bandung, Indonesia \\
aditps@itb.ac.id
\end{center}

\bigskip

\begin{quote}
\textbf{Abstract}\enspace
The quadratic embedding constant (QEC) of a connected graph
is defined to be the maximum of the quadratic function associated with
its distance matrix on a certain unit sphere of codimension two.
In this paper we derive a formula for the QEC
of a corona graph $G\odot H$.
It is shown that $\mathrm{QEC}(G\odot H)
=\psi_{H*}^{-1}(\mathrm{QEC}(G))$ holds
under some spectral assumptions on $H$,
where $\psi_{H*}^{-1}$ is the inverse function of the most right
branch of the analytic function $\psi_H$ defined by means of
the main eigenvalues of the adjacency matrix of $H$.
Moreover, if $H$ is a regular graph of which the 
adjacency matrix has the smallest eigenvalue $-2$,
then the formula is written down explicitly.
\end{quote}

\begin{quote}
\textbf{Keywords}\enspace
quadratic embedding constant,
corona graph,
main eigenvalue,
distance matrix,
distance spectra,
Euclidean distance geometry.
\end{quote}

\begin{quote}
\textbf{MSC}\enspace
primary:05C50.  \,\,  secondary:05C12, 05C76, 15A63, 51K05.
\end{quote}

\begin{quote}
{\bfseries Acknowledgements} \enspace 
This work was supported by 
the International Research Collaboration Program of 
Institut Teknologi Bandung, the
PMDSU Program funded by the Ministry of Higher Education, 
Science and Technology, Indonesia,
and the JSPS Grant-in-Aid for Scientific Research 23K03126, Japan,
\end{quote}

%%%%%%%%%%%%%%%%%%%%%%
\section{Introduction}
\label{01sec:Introduction}

Let $G=(V,E)$ be a finite connected graph with $|V|=n\ge2$
and $D=[d(x,y)]_{x,y\in V}$ the distance matrix of $G$,
where $d(x,y)$ stands for the 
standard graph distance between two vertices $x,y\in V$.
A \textit{quadratic embedding} of $G$ 
into a Euclidean space $\mathcal{H}$ is
a map $\varphi:V\to\mathcal{H}$ such that
\[
\|\varphi(x)-\varphi(y)\|^2=d(x,y), \quad x,y\in V.
\]
In general, a given graph does not
necessarily admit a quadratic embedding;
when it does, it is said to be \textit{of QE class}.
In the terminology of Euclidean distance geometry,
a graph is of QE class if and only if the distance matrix $D$
is a Euclidean distance matrix
\cite{Alfakih2018,Balaji-Bapat2007,Jaklic-Modic2013,
Jaklic-Modic2014, Liberti-Lavor-Maculan-Mucherino2014}.
It is known from the celebrated Schoenberg's theorem 
\cite{Schoenberg1935,Schoenberg1937,Schoenberg1938a,Schoenberg1938b}
that a connected graph $G$ is of QE class
if and only if its distance matrix $D$ is
conditionally negative definite.
To identify this condition, it is natural to consider the 
\textit{quadratic embedding constant (QEC)} defined by
\[
\mathrm{QEC}(G)
=\max\{\langle f,Df \rangle\,;\, f\in C(V), \,
\langle f,f \rangle=1, \, \langle \1,f \rangle=0\},
\]
where $C(V)$ is the space of real column vectors $f=[f_x]_{x\in V}$
with the canonical inner product $\langle\cdot,\cdot\rangle$,
and $\bm{1}\in C(V)$ is the column vector whose entries are all one.
In fact, a connected graph $G$ is of QE class
if and only if $\mathrm{QEC}(G)\le 0$.

Since the QEC was first introduced 
by Obata and Zakiyyah \cite{Obata-Zakiyyah2018},
interest has grown in characterization and
classification of graphs using the QEC
as a new numeric invariant of connected graphs.
The QECs of graphs of some particular classes are
determined explicitly, including
complete graphs and cycles \cite{Obata-Zakiyyah2018},
paths \cite{Mlotkowski2022},
complete multipartite graphs \cite{Obata2023b},
fan graphs \cite{Mlotkowski-Obata2025a, Mlotkowski-Obata2025b},
wheels \cite{Obata2017},
squid and kite graphs \cite{Purwaningsih-Sugeng2021},
hairy cycle graphs \cite{Irawan-Sugeng2021},
double star graphs \cite{Choudhury-Nandi2023}.
It is also interesting to obtain explicit formulas 
and estimates of QEC along with various graph operations,
for example, the Cartesian product \cite{Obata-Zakiyyah2018},
star product \cite{Baskoro-Obata2021, 
Mlotkowski-Obata2020, Obata-Zakiyyah2018}, 
lexicographic product \cite{Lou-Obata-Huang2022}, 
graph join \cite{Lou-Obata-Huang2022, Mlotkowski-Obata2025a},
and comb graphs (cluster graphs) \cite{Choudhury-Nandi2023}. 
Moreover, an effort was initiated to characterize
particular block graphs \cite{Baskoro-Obata2024}
and to construct primary non-QE graphs \cite{Obata2023a,Obata2023b}.

In this paper, 
along the line of research
on QEC in relation to various graph operations,
we focus on the corona of two graphs $G$ and $H$,
denoted by $G\odot H$,
for definition see Section \ref{02subsec:Corona graph}.
Since the corona graph was first discussed by
Frucht and Harary \cite{Frucht-Harary1970}
in relation to automorphism groups,
it has been actively studied as a benchmark class of graphs
along with chemical graph theory, graph coloring, 
graph labeling, network models, and so on.
For relevant spectral results we refer to
\cite{Barik-Pati-Sarma2007} for the adjacency spectra
and \cite{Indulal-Stevanovic2015} for the distance spectra.

The main purpose of this paper is
to derive a formula for $\mathrm{QEC}(G\odot H)$,
where $G$ is a connected graph on two or more vertices,
$H$ an arbitrary graph and 
$G\odot H$ their corona graph.
To describe $\mathrm{QEC}(G\odot H)$ we need
a special analytic function associated with $H$.
Let $A_H$ be the adjacency matrix of $H$,
$\mathrm{ev}(A_H)$ the set of distinct eigenvalues of $A_H$,
and $\mathrm{ev}_M(A_H)$ the subset of main ones.
Let $\omega_H(\lambda)$ be the unique
real analytic function with maximal domain satisfying
\begin{equation}\label{01eqn:def of omega(lambda) for H}
\omega_H(\lambda)
= 1+\lambda \langle\mathbf{1},(A_H+2+\lambda)^{-1}\mathbf{1}\rangle.
\end{equation}
It is shown that $\omega_H(\lambda)$ has a simple pole
at $\lambda=-\alpha-2$ for each
$\alpha\in \mathrm{ev}_{\mathrm{M}}(A_H)\backslash\{-2\}$ and,
is analytic except those poles.
Moreover, $\omega_H(\lambda)$ has 
$k_H=|\mathrm{ev}_{\mathrm{M}}(A_H)\backslash\{-2\}|\ge1$
distinct real zeroes.
We then introduce
a real analytic function $\psi_H(\lambda)$ with 
the maximal domain uniquely specified by
\begin{equation}\label{01eqn:def of psi(lambda) for H}
\psi_H(\lambda)=\frac{\lambda}{\omega_H(\lambda)}\,.
\end{equation}
It is shown that $\psi_H(\lambda)$ has a simple pole
at each zero of $\omega_H(\lambda)$,
and is analytic except those poles.
Let $\lambda_*$ be the largest zero of $\omega_H(\lambda)$.
Then $\psi_{H*}=\psi_H\!\restriction_{(\lambda_*,+\infty)}$
becomes an increasing function varying from $-\infty$ to $+\infty$.
Thus, we may define the inverse function 
$\lambda=\psi_{H*}^{-1}(\lambda^\prime)$
for $\lambda^\prime\in\mathbb{R}$,
namely, $\lambda$ is the largest
solution to the equation $\psi_H(\lambda)=\lambda^\prime$.
With those notations we will prove that
the formula
\begin{equation}\label{01eqn:main formula}
\mathrm{QEC}(G\odot H)
=\psi_{H*}^{-1}(\mathrm{QEC}(G))
\end{equation}
holds under some spectral assumptions,
for the precise statements
see Sections \ref{04subsec:Formula for QEC of corona graph}
and \ref{04subsec:Formula for QEC of corona graph with regular H}.
It is also interesting to note that if $H$ is a regular graph
with $\min\mathrm{ev}(A_H)=-2$,
formula \eqref{01eqn:main formula} holds for $\mathrm{QEC}(G)>0$,
and $\mathrm{QEC}(G\odot H)=0$
for any $G$ with $\mathrm{QEC}(G)\le0$.
We thus expect further profound relation to the graphs $H$ with
$\min\mathrm{ev}(A_H)=-2$, which have been extensively studied so far,
see e.g., the celebrated books 
\cite{Brouwer-Cohen-Neumaier1989,
Cvetkovic-Rowlinson-Simic2004,Cvetkovic-Rowlinson-Simic2010}.

The paper is organized as follows.
In Section 2 we obtain a useful expression of the distance matrix
of a corona graph and derive the set of basic equations
from which $\mathrm{QEC}(G\odot H)$ is determined.
In Section 3 we develop a general theory on
two key functions in
\eqref{01eqn:def of omega(lambda) for H} and
\eqref{01eqn:def of psi(lambda) for H}.
In Section 4 we obtain an intermediate
expression for $\mathrm{QEC}(G\odot H)$ 
as the greater or equal of $\psi_{H*}^{-1}(\mathrm{QEC}(G))$
and a certain constant $\gamma_3$ determined by
$\mathrm{ev}(A_H)$,
see Proposition \ref{04prop:preliminary formula with gamma3};
and then specify several spectral conditions
under which formula \eqref{01eqn:main formula} holds,
see Theorems \ref{04thm:min formula (01)}--\ref{04thm:min formula (03)}.
Moreover, if $H$ is a regular graph
the formula becomes simpler,
see Theorem \ref{04thm:main formula for regular graph}.
For a regular graph $H$ with $\min\mathrm{ev}(A_H)=-2$,
that formula \eqref{01eqn:main formula} holds for $\mathrm{QEC}(G)>0$
and $\mathrm{QEC}(G\odot H)=0$ otherwise,
see Theorem \ref{04thm:QEC for H with minev=-2}.
In Section 5 we present some examples.
In particular, we show that the main results
on double star graphs and particular cluster graphs
(comb graphs) in \cite{Choudhury-Nandi2023}
are covered by our main formula.

\bigskip
{\bfseries Notations:}\enspace
For any non-empty finite set $X$ 
the space of real column vectors $f=[f_x]_{x\in X}$ is
denoted by $C(X)$.
For each $x\in X$ let $e_x\in C(X)$ be the canonical unit vector
and $\bm{1}\in C(V)$ the vector of which the entries are all one,
that is $\bm{1}=\sum_{x\in X} e_x$.
Let $Y$ be another non-empty finite set.
The space of real matrices
$k=[k_{x,y}]_{x\in X,y\in Y}$ is denoted by $C(X\times Y)$.
The all-one matrix is denoted by $J_{X,Y}$ or simply by $J$.
If $X=Y$, the identity matrix is denoted by $I_X$ or simply by $I$. 
Recall that there is a canonical isomorphism
$C(X\times Y)\cong C(X)\otimes C(Y)$.

\section{Preliminaries}
\label{01sec:Preliminaries}

\subsection{Corona graph}
\label{02subsec:Corona graph}

Let $G=(V_1,E_1)$ and $H=(V_2,E_2)$ be 
arbitrary graphs with $V_1\cap V_2=\emptyset$.
For each vertex of $G$ we associate a copy of $H$.
A graph obtained from $G$ and $|V_1|$ copies of $H$
by adding edges connecting
each vertex of $G$ and every vertex of the corresponding copy of $H$
is called the \textit{corona} of $G$ and $H$,
and is denoted by $G\odot H$.
Here we note that, as a binary operation of graphs,
$G\odot H$ is neither commutative nor associative.
Obviously, the corona graph $G\odot H$ is connected
whenever so is $G$ (even if $H$ is disconnected).

By definition the vertex set of $G\odot H$ 
is the union of $V_1$ and $|V_1|$ copies of $V_2$.
To describe it, letting $o$ be a new symbol (vertex)
which is not a member of $V_2$, we introduce
the Cartesian product set
\begin{equation}\label{02eqn:Cartesian product vertex sets}
V=V_1\times (\{o\}\cup V_2)
=\{(i,x)\,;\, i\in V_1,\,\, x\in \{o\}\cup V_2\}.
\end{equation}
Then through the maps $x\mapsto (x,o)$ for $x\in V_1$, and
$y\mapsto (y,i)$ for $y$ being a vertex of a copy of $H$
corresponding to a vertex $i\in V_1$,
we may identify the vertex set of $G\odot H$ with $V$
and, introduce the adjacency relation on $V$ from $G\odot H$.
Thus, the corona $G\odot H$ becomes a graph
on $V$ defined by \eqref{02eqn:Cartesian product vertex sets},
and, from now on, we always take this realization. 

To describe the adjacency relation of the corona graph $G\odot H$
it is convenient to consider the graph join $\Tilde{H}=K_1+H$,
where $K_1$ is the singleton graph on $\{o\}$.
By definition, $\Tilde{H}$ is a graph on $\{o\}\cup V_2$ with
the adjacency relation:
\[
x\sim y \,\,\,\text{in $\Tilde{H}$}
\quad\Longleftrightarrow\quad
\begin{array}{l}
\text{(i) $x\neq o$, $y\neq o$, $x\sim y$ in $H$; or} \\
\text{(ii) $x=o$, $y\neq o$; or} \\
\text{(iii) $x\neq o$, $y=o$}.
\end{array}
\]
Then the corona $G\odot H$ is a graph on
$V=V_1\times (\{o\}\cup V_2)$ 
with the adjacency relation:
\[
(i,x)\sim (j,y)  \,\,\,\text{in $G\odot H$}
\quad\Longleftrightarrow\quad
\begin{array}{l}
\text{(i) $i=j$ and $x\sim y$ in $\Tilde{H}$; or} \\
\text{(ii) $i\sim j$ in $G$ and $x=y=o$}.
\end{array}
\]

Next we focus on the distance matrix of $G\odot H$,
denoted by $D=D_{G\odot H}$.
We note the canonical isomorphism
\begin{align}
C(V)
&=C(V_1\times (\{o\}\cup V_2))
\nonumber \\
&\cong C(V_1)\otimes C(\{o\}\cup V_2) 
\cong C(V_1)\otimes (\mathbb{R}\oplus C(V_2)),
\label{02eqn:C(V) in tensor form}
\end{align}
which follows from \eqref{02eqn:Cartesian product vertex sets}.

\begin{lemma}\label{01lem:distance matrix in tensor form}
Notations and assumptions being as above,
let $D_G$ be the distance matrix of $G$ and 
$A_H$ the adjacency matrix of $H$.
Then, the distance matrix of the corona graph $G\odot H$ is
expressed as 
\begin{equation}\label{02eqn:distance matrix in tensor form}
D_{G\odot H}
=D_G\otimes \begin{bmatrix} J & J \\ J & J \end{bmatrix}
 +I \otimes \begin{bmatrix} 0 & 0 \\ 0 & -2I-A_H \end{bmatrix}
 +J \otimes \begin{bmatrix} 0 & J \\ J & 2J \end{bmatrix},
\end{equation}
in accordance with the canonical isomorphism 
\eqref{02eqn:C(V) in tensor form}.
\end{lemma}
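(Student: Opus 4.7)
The plan is to verify \eqref{02eqn:distance matrix in tensor form} entry by entry under the canonical identification $C(V)\cong C(V_1)\otimes(\mathbb{R}\oplus C(V_2))$. A vertex $(i,x)\in V$ corresponds to $e_i\otimes e_x$, where $e_o$ together with $\{e_y:y\in V_2\}$ forms a basis of $\mathbb{R}\oplus C(V_2)$. To do this, I would first derive a case-by-case distance formula on $G\odot H$ and then compute the $((i,x),(j,y))$ entry of the right-hand side from each of the three tensor summands.

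For the distance formula, I would use the structural fact that the only edges leaving a ``cluster'' (the copy of $\Tilde{H}=K_1+H$ sitting above a single vertex of $G$) are those incident to its top vertex $(i,o)$. Hence any shortest path between $(i,x)$ and $(j,y)$ with $i\neq j$ must pass through $(i,o)$ and $(j,o)$, giving $d_{G\odot H}((i,o),(j,o))=d_G(i,j)$ with a $+1$ penalty added for each endpoint sitting in $V_2$. Within a single cluster, the distance is $0$ if the vertices coincide, $1$ if exactly one of $x,y$ equals $o$, and for distinct $x,y\in V_2$ it equals $1$ when $x\sim y$ in $H$ and $2$ otherwise (the detour through $(i,o)$); compactly, $d_{G\odot H}((i,x),(i,y))=2-[A_H]_{xy}$ when $x,y\in V_2$.

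Next I would read off the $((i,x),(j,y))$ entry of the right-hand side. The first summand contributes $d_G(i,j)$ in every cell, since its $2\times2$ block factor is the all-one matrix; the second summand contributes $0$ unless $i=j$ and $x,y\in V_2$, in which case it equals $-2\delta_{xy}-[A_H]_{xy}$; the third summand contributes $0$, $1$, or $2$ according to whether the pair $(x,y)$ contains two, one, or zero occurrences of $o$. Summing these in each of the six cases (same vs.\ different cluster, crossed with the three $(x,y)$-types up to symmetry) yields exactly the distance formula of the previous paragraph. The only case needing a brief simplification is $i=j$ with $x,y\in V_2$, where the total is $2-2\delta_{xy}-[A_H]_{xy}$; this correctly gives $0$ when $x=y$ (using $[A_H]_{xx}=0$) and $2-[A_H]_{xy}$ otherwise.

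The main (and only) obstacle is the careful bookkeeping of cases and keeping straight the convention that the first row/column of each $2\times 2$ block corresponds to $o$ and the second to $V_2$; no spectral input on $H$ is used and the argument is purely combinatorial.
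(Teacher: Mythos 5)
Your proof is correct and follows essentially the same route as the paper: derive the case-by-case distance formula for $G\odot H$ (distances within a cluster via $\Tilde{H}=K_1+H$, distances between clusters forced through the top vertices), then match it against the block/tensor expression. The only cosmetic difference is that the paper packages the intermediate step as $D_{G\odot H}=D_G\otimes J+I\otimes D_{\Tilde{H}}+(J-I)\otimes B_{\Tilde{H}}$ with $B_{\Tilde{H}}(x,y)=d_{\Tilde{H}}(x,o)+d_{\Tilde{H}}(o,y)$ before substituting block forms, whereas you verify the $((i,x),(j,y))$ entries of the right-hand side directly; your case checks, including the $i=j$, $x,y\in V_2$ simplification $2-2\delta_{xy}-[A_H]_{xy}$, are all accurate.
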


\begin{proof}
Let $d_G$ and $d_{\Tilde{H}}$ be the graph distances of
$G$ and $\Tilde{H}=K_1+H$.
We see easily from definition that
the graph distance of $G\odot H$ is given by
\[
d_{G\odot H}((i,x),(j,y))=
\begin{cases}
d_{\Tilde{H}}(x,y), & \text{if $i=j$}, \\
d_{\Tilde{H}}(x,o)+d_G(i,j)+d_{\Tilde{H}}(o,y), & \text{if $i\neq j$},
\end{cases}
\]
or equivalently,
\begin{align}
d_{G\odot H}((i,x),(j,y))
&=d_G(i,j)J(x,y)+I(i,j)d_{\Tilde{H}}(x,y) 
\nonumber \\
&\qquad  +(J(i,j)-I(i,j))(d_{\Tilde{H}}(x,o)+d_{\Tilde{H}}(o,y)), 
\label{02eqn:in proof Lemma 2.1 (1)}
\end{align}
for any $i,j \in V_1$ and $x,y\in \{o\}\cup V_2$.
Then, letting
\begin{equation}\label{02eqn:in proof Lemma 2.1 (2)}
B_{\Tilde{H}}(x,y)=d_{\Tilde{H}}(x,o)+d_{\Tilde{H}}(o,y),
\qquad x,y\in \{o\}\cup V_2,
\end{equation}
\eqref{02eqn:in proof Lemma 2.1 (1)} becomes
\begin{equation}\label{02eqn:in proof Lemma 2.1 (3)}
D_{G\odot H}
=D_G\otimes J +I \otimes D_{\Tilde{H}}
+(J-I)\otimes B_{\Tilde{H}}\,.
\end{equation}
On the other hand, the graph distance of $\Tilde{H}=K_1+H$ is given by
\begin{equation}\label{02eqn:in proof Lemma 2.1 (4)}
d_{\Tilde{H}}(x,y)=
\begin{cases}
2, & \text{if $x\neq o$, $y\neq o$, $x\neq y$, $x\not\sim y$ in $H$}, \\
1, & \text{if $x\neq o$, $y\neq o$, $x\sim y$ in $H$}, \\
1, & \text{if $x=o$, $y\neq o$ or $x\neq o$, $y=o$}, \\
0, &\text {if $x=y$}.
\end{cases}
\end{equation}
Then the distance matrix $D_{\Tilde{H}}$
and the matrix $B_{\Tilde{H}}$ in \eqref{02eqn:in proof Lemma 2.1 (2)}
are written down in block matrix forms according to
the partition $\{o\}\cup V_2$ as follows:
\begin{equation}\label{02eqn:in proof Lemma 2.1 (5)}
D_{\Tilde{H}}
=\begin{bmatrix}
0 & J \\
J & 2J-2I-A_H
\end{bmatrix},
\qquad
B_{\Tilde{H}}=
\begin{bmatrix}
0 & J \\
J & 2J
\end{bmatrix}.
\end{equation}
Then \eqref{02eqn:distance matrix in tensor form} is
immediate from \eqref{02eqn:in proof Lemma 2.1 (3)}
and \eqref{02eqn:in proof Lemma 2.1 (5)}.
\end{proof}

\begin{remark}\normalfont
An alternative block matrix expression of
the distance matrix of $G\odot H$,
of course, equivalent to \eqref{02eqn:distance matrix in tensor form},
is adopted in \cite{Indulal-Stevanovic2015},
where the purpose is to 
calculate its distance spectrum.
\end{remark}

%%%%%%%%%%%%%%%%%%%%%%%%%%%%%%%%%%%%%%%%%%%%%%%%%%%%%
\subsection{A preliminary formula for $\mathrm{QEC}(G\odot H)$}

We will be concerned with a corona graph $G\odot H$,
where $G$ is a connected graph on two or more vertices.
Namely, we avoid a corona graph $K_1\odot H\cong K_1+H$
from our discussion because $\mathrm{QEC}(K_1)$ is not defined.
In fact, contrary to its simple appearance, 
a general formula for $\mathrm{QEC}(K_1+H)$ is not
known yet, see \cite{Lou-Obata-Huang2022}
for $H$ being a regular graph
and \cite{Mlotkowski-Obata2025a,Mlotkowski-Obata2025b}
for $H=P_n$ being a path.

Thus, from now on we always consider
a corona graph $G\odot H$, where
$G=(V_1,E_1)$ is a connected graph with $|V_1|\ge2$
and $H=(V_2,E_2)$ an arbitrary graph with $|V_2|\ge1$.
We first note a lower bound of $\mathrm{QEC}(G\odot H)$.

\begin{lemma}\label{01lem:lower bound}
Let $G=(V_1,E_1)$ be a connected graph with $|V_1|\ge2$
and $H=(V_2,E_2)$ an arbitrary graph with $|V_2|\ge1$.
Then 
\begin{equation}\label{02eqn:lower bound}
\mathrm{QEC}(G\odot H)
\ge \max\{-2+\sqrt2,\, \mathrm{QEC}(G),\, \mathrm{QEC}(K_1+H)\}.
\end{equation}
\end{lemma}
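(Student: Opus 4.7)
The general strategy is the standard monotonicity of QEC under isometric induced subgraphs: if $V'\subset V$ satisfies $d_{G\odot H}(x,y)=d_{G\odot H[V']}(x,y)$ for all $x,y\in V'$, then extending any admissible test vector $f'\in C(V')$ by zero produces $f\in C(V)$ with $\|f\|=1$, $\langle\bm{1},f\rangle=0$, and $\langle f,D_{G\odot H}f\rangle=\langle f',D_{G\odot H[V']}f'\rangle$; taking the maximum over $f'$ yields $\mathrm{QEC}(G\odot H[V'])\le\mathrm{QEC}(G\odot H)$. The plan is to exhibit three such isometric induced subgraphs of $G\odot H$, respectively isomorphic to $G$, to $\Tilde{H}=K_1+H$, and to $P_4$.

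For the first, the vertex set $V_1\times\{o\}$ visibly induces a copy of $G$, and from Lemma~\ref{01lem:distance matrix in tensor form} (or directly from the adjacency description of $G\odot H$) any shortest path between $(i,o)$ and $(j,o)$ stays inside $V_1\times\{o\}$, giving $d_{G\odot H}((i,o),(j,o))=d_G(i,j)$. For the second, fix $i\in V_1$ and take $\{(i,o)\}\cup\{(i,y):y\in V_2\}$; this induces a copy of $\Tilde{H}$, and distances there are all at most two and match those in $\Tilde{H}$ (the center vertex $(i,o)$ plays the role of the apex of the join). These two subgraphs respectively furnish the bounds $\mathrm{QEC}(G)\le\mathrm{QEC}(G\odot H)$ and $\mathrm{QEC}(K_1+H)\le\mathrm{QEC}(G\odot H)$.

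For the third lower bound I would invoke the known value $\mathrm{QEC}(P_4)=-2+\sqrt{2}$ from \cite{Mlotkowski2022}. Using the connectedness of $G$ with $|V_1|\ge2$, pick an edge $i\sim j$ of $G$; using $|V_2|\ge1$, pick some $y\in V_2$. I claim the four vertices $(i,y),(i,o),(j,o),(j,y)$ induce an isometric copy of $P_4$: the three consecutive pairs are edges of $G\odot H$; the cross-pairs $(i,y)$--$(j,o)$ and $(i,o)$--$(j,y)$ lie at distance $2$ via the appropriate center; and the remaining pair satisfies $d_{G\odot H}((i,y),(j,y))=3$, because any walk from the $H$-copy attached to $i$ to the $H$-copy attached to $j$ must exit through $(i,o)$ and enter through $(j,o)$, incurring length at least $1+d_G(i,j)+1=3$.

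Combining these three lower bounds gives \eqref{02eqn:lower bound}. The only step beyond immediate verification is the last isometry claim---that no shortcut of length smaller than $3$ exists between $(i,y)$ and $(j,y)$---but this is forced by the adjacency structure of a corona: a vertex $(i,y)$ with $y\neq o$ has neighbors only inside the $i$-th copy of $\Tilde{H}$, so every path leaving that copy must pass through $(i,o)$.
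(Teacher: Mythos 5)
Your proposal is correct and follows essentially the same route as the paper: the paper's proof likewise observes that $G\odot H$ contains $P_4\cong K_2\odot K_1$, $G$, and $K_1+H$ as isometric subgraphs and invokes the monotonicity of QEC under isometric embedding together with $\mathrm{QEC}(P_4)=-2+\sqrt2$. You merely spell out the explicit vertex sets and the verification of isometry, which the paper leaves to the reader.
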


\begin{proof}
By assumption $G\odot H$ contains a path $P_4\cong K_2\odot K_1$
as an isometric subgraph.
Then, by general theory (see e.g., \cite{Obata-Zakiyyah2018}) we have
\[
\mathrm{QEC}(G\odot H)\ge \mathrm{QEC}(P_4)=-2+\sqrt2.
\]
Similarly, since $G\odot H$ contains $G$ and $K_1+H$ 
as isometric subgraphs, we have
$\mathrm{QEC}(G\odot H)\ge \mathrm{QEC}(G)$ and
$\mathrm{QEC}(G\odot H)\ge \mathrm{QEC}(K_1+H)$.
\end{proof}

\begin{proposition}\label{01prop:QEC=0}
Let $G=(V_1,E_1)$ be a connected graph with $|V_1|\ge2$
and $H=(V_2,E_2)$ an arbitrary graph with $|V_2|\ge1$.
Then $\mathrm{QEC}(G\odot H)=0$ holds if
{\upshape (i)} $\mathrm{QEC}(G)=0$ and $\mathrm{QEC}(K_1+H)\le0$;
or {\upshape (ii)} $\mathrm{QEC}(G)\le0$ and $\mathrm{QEC}(K_1+H)=0$.
\end{proposition}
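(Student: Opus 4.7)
The plan is to establish the two inequalities $\mathrm{QEC}(G\odot H)\ge0$ and $\mathrm{QEC}(G\odot H)\le0$ separately. For the lower bound I would invoke Lemma \ref{01lem:lower bound}: under case (i) it gives $\mathrm{QEC}(G\odot H)\ge\mathrm{QEC}(G)=0$, and under case (ii) it gives $\mathrm{QEC}(G\odot H)\ge\mathrm{QEC}(K_1+H)=0$, so in either case $\mathrm{QEC}(G\odot H)\ge0$.

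For the upper bound I would evaluate $\langle f,D_{G\odot H}f\rangle$ on an arbitrary $f\in C(V)$ satisfying $\langle\mathbf{1},f\rangle=0$, using the decomposition \eqref{02eqn:distance matrix in tensor form}. In accordance with the identification \eqref{02eqn:C(V) in tensor form}, I write $f=\sum_{i\in V_1}e_i\otimes(a_ie_o+g_i)$ with $a_i\in\mathbb{R}$ and $g_i\in C(V_2)$, and set $b_i=a_i+\langle\mathbf{1},g_i\rangle$, $A=\sum_i a_i$, $S=\sum_i\langle\mathbf{1},g_i\rangle$. A short tensor-product computation of the three summands of $D_{G\odot H}$ then yields
\[
\langle f,D_{G\odot H}f\rangle=\langle b,D_G b\rangle+\sum_{i\in V_1}\langle g_i,(-2I-A_H)g_i\rangle+2S(A+S).
\]
The single scalar constraint $A+S=\langle\mathbf{1},f\rangle=0$ plays a double role here: it annihilates the third summand, and it simultaneously forces $\sum_i b_i=0$, which is the admissibility condition for $b$ against $D_G$.

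It then remains to show each of the two surviving terms is $\le0$. The first is $\le0$ since $\sum_i b_i=0$ and $\mathrm{QEC}(G)\le0$ under either hypothesis. For the second I would verify the side claim that $\mathrm{QEC}(K_1+H)\le0$ forces the matrix $-2I-A_H$ to be negative semidefinite. This follows from running exactly the same kind of constrained computation on $D_{K_1+H}$ using the block form \eqref{02eqn:in proof Lemma 2.1 (5)}: for a test vector $h=pe_o+q$ with $\langle\mathbf{1},h\rangle=0$, the analogous cross terms again cancel and leave the clean identity $\langle h,D_{K_1+H}h\rangle=\langle q,(-2I-A_H)q\rangle$; letting $q$ range freely over $C(V_2)$ (with $p$ adjusted to $p=-\langle\mathbf{1},q\rangle$) promotes the scalar QEC bound to the desired matrix inequality. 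Consequently each $\langle g_i,(-2I-A_H)g_i\rangle\le0$, so $\langle f,D_{G\odot H}f\rangle\le0$ and therefore $\mathrm{QEC}(G\odot H)\le0$.

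The main obstacle is purely the tensor-product bookkeeping: one must expand the three summands in \eqref{02eqn:distance matrix in tensor form} accurately enough to see the displayed identity, and in particular to recognise that the \emph{single} scalar relation $\langle\mathbf{1},f\rangle=0$ is precisely what both kills the cross term $2S(A+S)$ and legitimises $b$ against $D_G$. Once these two observations are isolated, the rest of the argument is mechanical and uses no additional spectral information about $H$ beyond the hypothesis $\mathrm{QEC}(K_1+H)\le0$.
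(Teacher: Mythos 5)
Your proof is correct, but it takes a genuinely different route from the paper's. The paper disposes of the upper bound in one line by citing the known fact that the star product (vertex amalgamation) of two graphs of QE class is again of QE class, and observing that $G\odot H$ is an iterated star product of $G$ with $|V_1|$ copies of $K_1+H$; the lower bound is Lemma \ref{01lem:lower bound}, exactly as in your argument. You instead prove the upper bound directly from the tensor decomposition \eqref{02eqn:distance matrix in tensor form}: your identity $\langle f,D_{G\odot H}f\rangle=\langle b,D_Gb\rangle+\sum_{i}\langle g_i,(-2I-A_H)g_i\rangle+2S(A+S)$ checks out (the all-ones block in the first summand produces exactly the pairing $\sum_{i,j}(D_G)_{ij}b_ib_j$, and the cross terms of the third summand sum to $2S(A+S)$), the single constraint $A+S=\langle\mathbf{1},f\rangle=0$ does indeed both annihilate the last term and make $b$ admissible against $D_G$, and your reduction of $\mathrm{QEC}(K_1+H)\le0$ to negative semidefiniteness of $-2I-A_H$ via the identity $\langle h,D_{K_1+H}h\rangle=\langle q,(-2I-A_H)q\rangle$ on the constraint hyperplane is also correct, since every admissible $h=pe_o+q$ has $p=-\langle\mathbf{1},q\rangle$ forced. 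What the paper's route buys is brevity at the cost of an external citation; what yours buys is a self-contained elementary argument that, as a by-product, shows $\mathrm{QEC}(K_1+H)\le0$ is equivalent to $\min\mathrm{ev}(A_H)\ge-2$ for an arbitrary (not necessarily regular) graph $H$, which extends Proposition \ref{04prop:QEC(K1+H)=0 case} beyond the regular case.
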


\begin{proof}
It is known \cite{Mlotkowski-Obata2020,Obata-Zakiyyah2018} that
the star product (vertex amalgamation) of two graphs of QE class
is of QE class.
Since $G\odot H$ is obtained by repeated application of
star product of $G$ and $K_1+H$,
we have $\mathrm{QEC}(G\odot H)\le 0$
if both $G$ and $K_1+H$ are of QE class.
Therefore, $\mathrm{QEC}(G\odot H)\le 0$ holds
if (i) or (ii) is satisfied.
On the other hand, in that case, applying
Lemma \ref{01lem:lower bound}, we obtain
$\mathrm{QEC}(G\odot H)\ge 0$
and hence $\mathrm{QEC}(G\odot H)=0$.
\end{proof}

For simplicity the vertex set of $G\odot H$ is denoted by $V$
and the distance matrix by $D$.
Let $\mathcal{S}=\mathcal{S}_{G\odot H}$ be the set of solutions 
$(f,\lambda,\mu)\in C(V)\times \mathbb{R}\times \mathbb{R}$
to equations:
\begin{align}
(D-\lambda I)f &=\frac{\mu}{2}\bm{1}, 
\label{02eqn:basic equation (1)} \\
\langle \bm{1},f\rangle &=0, 
\label{02eqn:basic equation (2)} \\
\langle f,f\rangle &=1.
\label{02eqn:basic equation (3)}
\end{align}
Then the following assertion is fundamental, see
e.g., \cite{Obata-Zakiyyah2018}.

\begin{proposition}\label{02prop:starting expression for QEC}
Let $G=(V_1,E_1)$ be a connected graph with $|V_1|\ge2$
and $H=(V_2,E_2)$ an arbitrary graph $|V_2|\ge1$.
Then we have
\begin{equation}\label{03eqn:starting formula for QEC(1)}
\mathrm{QEC}(G\odot H)
=\max \lambda(\mathcal{S}),
\end{equation}
where $\lambda(\mathcal{S})$ is the set of $\lambda\in\mathbb{R}$
appearing in a solution $(f,\lambda,\mu)\in\mathcal{S}$.
\end{proposition}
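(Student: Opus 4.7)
The plan is to recognize this as a standard Lagrange multiplier problem and verify the correspondence between constrained maximizers and the solution set $\mathcal{S}$. Writing the QEC as the maximum of the smooth function $f\mapsto \langle f,Df\rangle$ on the intersection of the unit sphere $\langle f,f\rangle=1$ and the hyperplane $\langle\bm{1},f\rangle=0$, I first note that this constraint set is compact and non-empty (since $|V|\ge 2$), so the maximum is attained at some $f\in C(V)$.

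Next I would form the Lagrangian $L(f)=\langle f,Df\rangle-\lambda(\langle f,f\rangle-1)-\mu\langle\bm{1},f\rangle$ with multipliers $\lambda,\mu\in\mathbb{R}$. Since $D$ is symmetric, stationarity in $f$ gives $2Df-2\lambda f-\mu\bm{1}=0$, i.e.\ equation \eqref{02eqn:basic equation (1)}, while the constraints themselves are \eqref{02eqn:basic equation (2)} and \eqref{02eqn:basic equation (3)}. Constraint qualification is immediate: the gradients of the two constraints are $2f$ and $\bm{1}$, and these are linearly independent on the constraint set, because $f=c\bm{1}$ together with $\langle\bm{1},f\rangle=0$ forces $f=0$, contradicting $\langle f,f\rangle=1$. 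Hence every maximizer yields a triple $(f,\lambda,\mu)\in\mathcal{S}$.

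The key identity is that for any $(f,\lambda,\mu)\in\mathcal{S}$, taking the inner product of \eqref{02eqn:basic equation (1)} with $f$ and using \eqref{02eqn:basic equation (2)} and \eqref{02eqn:basic equation (3)} gives
\begin{equation*}
\langle f,Df\rangle=\lambda\langle f,f\rangle+\tfrac{\mu}{2}\langle\bm{1},f\rangle=\lambda.
\end{equation*}
Thus $\lambda(\mathcal{S})\subseteq\{\langle f,Df\rangle:f\text{ satisfies the two constraints}\}$, which gives $\max\lambda(\mathcal{S})\le\mathrm{QEC}(G\odot H)$. Conversely, the maximizer found above produces $(f_0,\lambda_0,\mu_0)\in\mathcal{S}$ with $\lambda_0=\langle f_0,Df_0\rangle=\mathrm{QEC}(G\odot H)$, giving the reverse inequality and establishing \eqref{03eqn:starting formula for QEC(1)}.

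I do not expect any serious obstacle: the argument is the classical Lagrange multiplier reduction of a constrained quadratic optimization, and the only point requiring care is that $\mathcal{S}$ is non-empty and that the maximum is indeed attained, both of which follow from compactness together with the constraint qualification above. A minor subtlety worth mentioning explicitly is that $\mathcal{S}$ may contain solutions corresponding to saddle points or local minima of the Lagrangian, but these only produce values $\lambda\le\mathrm{QEC}(G\odot H)$ and are harmless for the identity $\mathrm{QEC}(G\odot H)=\max\lambda(\mathcal{S})$.
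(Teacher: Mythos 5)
Your proof is correct and is precisely the standard Lagrange multiplier argument; the paper does not prove this proposition itself but defers to the cited reference (Obata--Zakiyyah), where the same reduction is carried out. The only points needing care---attainment of the maximum by compactness, the constraint qualification, and the identity $\langle f,Df\rangle=\lambda$ obtained by pairing the stationarity equation with $f$---are all handled properly in your write-up.
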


Thus our task is to find the largest $\lambda\in\mathbb{R}$
appearing as a solution $(f,\lambda,\mu)\in\mathcal{S}$ to equations 
\eqref{02eqn:basic equation (1)}--\eqref{02eqn:basic equation (3)}.
If we know in advance that $\mathrm{QEC}(G\odot H)\ge q_0$
for some constant $q_0\in\mathbb{R}$, we only need to look for
the solutions $(f,\lambda,\mu)\in\mathcal{S}$ with $\lambda\ge q_0$
and we have
\begin{equation}\label{02eqn:QEC by lower bound q_0}
\mathrm{QEC}(G\odot H)
=\max \lambda(\mathcal{S})\cap[q_0,+\infty).
\end{equation}
For example, we may take $q_0=-2+\sqrt2$ by 
Lemma \ref{01lem:lower bound}.

To solve equations
\eqref{02eqn:basic equation (1)}--\eqref{02eqn:basic equation (3)}
we adopt the canonical isomorphism
$C(V)\cong C(V_1)\otimes(\mathbb{R}\oplus C(V_2))$ in
\eqref{02eqn:C(V) in tensor form}.
Let $\{e_x\,;\, x\in V_2\}$ be the canonical basis of $C(V_2)$.
Since the vectors 
\begin{equation}\label{02eqn:canonical base}
\begin{bmatrix} 1 \\ 0 \end{bmatrix},
\qquad
\begin{bmatrix} 0 \\ e_x \end{bmatrix},
\quad x\in V_2,
\end{equation}
from the canonical basis of $\mathbb{R}\oplus C(V_2)$,
any $f\in C(V) \cong C(V_1)\otimes (\mathbb{R}\oplus C(V_2))$
admits a unique expression of the form:
\begin{equation}\label{02eqn:expansion of f in C(V)}
f=\xi_o\otimes \begin{bmatrix} 1 \\ 0 \end{bmatrix}
+ \sum_{x\in V_2} \xi_x\otimes \begin{bmatrix} 0 \\ e_x \end{bmatrix},
\qquad
\xi_x \in C(V_1),
\quad x\in \{o\}\cup V_2\,.
\end{equation}
Thus, equations 
\eqref{02eqn:basic equation (1)}--\eqref{02eqn:basic equation (3)}
for $(f,\lambda,\mu)$ 
are naturally transformed into equivalent equations
for $(\xi_o,\{\xi_x\,;\,x\in V_2\},\lambda,\mu)$.
 
\begin{lemma}\label{02lem:equivalent form of 2.10}
Equation \eqref{02eqn:basic equation (1)} is equivalent to
the following set of equations:
\begin{align}
&(D_G-\lambda I)\xi_o 
+\sum_{x\in V_2}(D_G\xi_x+J\xi_x)=\frac{\mu}{2}\,\bm{1},
\label{02eqn:basic equation (011)}
\\
&(J+\lambda I)\xi_o +\sum_{y\in V_2} J\xi_y
-\sum_{y\in V_2} \langle e_x, (A_H+2+\lambda)e_y\rangle \xi_y=0,
\qquad x\in V_2,
\label{02eqn:basic equation (012)}
\end{align}
\end{lemma}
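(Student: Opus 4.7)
\emph{Plan.} Equation \eqref{02eqn:basic equation (1)} is a single vector identity in $C(V)$. Under the isomorphism $C(V)\cong C(V_1)\otimes(\mathbb{R}\oplus C(V_2))$ used in \eqref{02eqn:expansion of f in C(V)}, I will split it into scalar equations by projecting onto the two types of basis vectors in the inner factor: the ``$o$-direction'' on the one hand, and each $e_x$ for $x\in V_2$ on the other. The plan is to substitute the block expression \eqref{02eqn:distance matrix in tensor form} for $D$ and the expansion \eqref{02eqn:expansion of f in C(V)} for $f$ into $(D-\lambda I)f=\tfrac{\mu}{2}\bm{1}$, apply each of the three tensor summands to the basis vectors, and collect coefficients. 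The $o$-coefficient equation will be exactly \eqref{02eqn:basic equation (011)}, while the $e_x$-coefficient equation will be a longer identity that collapses to \eqref{02eqn:basic equation (012)} after one elimination step using \eqref{02eqn:basic equation (011)}.

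\emph{Key steps.} The core computation is the action of each inner matrix in \eqref{02eqn:distance matrix in tensor form} on the basis vectors of $\mathbb{R}\oplus C(V_2)$. The first inner factor sends both the $o$-basis vector and every $e_x$ to the all-one vector on $\{o\}\cup V_2$; the second annihilates $o$ and sends $e_x$ to $-(2I+A_H)e_x$ in the $V_2$-block; the third sends $o$ to $\bm{1}$ in the $V_2$-block and sends $e_x$ to $1$ in the $o$-slot together with $2\bm{1}$ on $V_2$. Collecting the $o$-coefficient on both sides of $(D-\lambda I)f=\tfrac{\mu}{2}\bm{1}$ reproduces \eqref{02eqn:basic equation (011)} directly. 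Collecting the $e_x$-coefficient produces
\[
D_G\xi_o+\sum_{y\in V_2}D_G\xi_y+J\xi_o+2\sum_{y\in V_2}J\xi_y-\sum_{y\in V_2}\langle e_x,(A_H+2I)e_y\rangle\xi_y-\lambda\xi_x=\frac{\mu}{2}\bm{1}.
\]
Subtracting \eqref{02eqn:basic equation (011)} from this identity cancels $D_G\xi_o$, $\sum_{y}D_G\xi_y$, one copy of $\sum_{y}J\xi_y$, and $\tfrac{\mu}{2}\bm{1}$; rewriting $-\lambda\xi_x=-\sum_{y}\langle e_x,\lambda I\,e_y\rangle\xi_y$ and absorbing it into the bilinear sum then yields \eqref{02eqn:basic equation (012)}. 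Each step is reversible, so the two equations together are equivalent to \eqref{02eqn:basic equation (1)}.

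\emph{Main obstacle.} The argument is fundamentally computational, and the only real difficulty is bookkeeping: the letter $J$ in \eqref{02eqn:distance matrix in tensor form} denotes several all-one matrices of different shapes (on $V_1\times V_1$ from the outer factor, and $1\times 1$, $1\times|V_2|$, $|V_2|\times 1$, $|V_2|\times|V_2|$ from the four blocks of the inner factors), and the eight elementary products of basis vectors by the inner matrices must be listed explicitly before being assembled into the two component equations. I will tabulate these eight products at the start of the proof to keep the subsequent collection of coefficients transparent.
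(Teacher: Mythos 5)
Your proposal is correct and follows essentially the same route as the paper: expand $f$ in the canonical basis of $C(V_1)\otimes(\mathbb{R}\oplus C(V_2))$, compute $Df$ from the tensor-block form of the distance matrix, read off the $o$-component to get \eqref{02eqn:basic equation (011)} and the $e_x$-component to get the longer identity, then subtract \eqref{02eqn:basic equation (011)} and absorb $-\lambda\xi_x$ into the bilinear sum to obtain \eqref{02eqn:basic equation (012)}, noting reversibility. Your intermediate $e_x$-equation and the cancellation pattern match the paper's \eqref{03eqn:main equation (02)} exactly, so no further comment is needed.
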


\begin{proof}
Using
\begin{align*}
D\bigg(\xi_o\otimes \begin{bmatrix} 1 \\ 0 \end{bmatrix}\bigg)
&=D_G\xi_o\otimes \begin{bmatrix} 1 \\ \mathbf{1} \end{bmatrix}
 +J\xi_o\otimes \begin{bmatrix} 0 \\ \mathbf{1} \end{bmatrix},
\\
D\bigg(\xi_x\otimes \begin{bmatrix} 0 \\ e_x \end{bmatrix}\bigg)
&=D_G\xi_x\otimes \begin{bmatrix} 1 \\ \mathbf{1} \end{bmatrix}
 +\xi_x\otimes \begin{bmatrix} 0 \\ -(A_H+2)e_x \end{bmatrix}
 +J\xi_x\otimes \begin{bmatrix} 1 \\ 2\mathbf{1} \end{bmatrix},
\quad x\in V_2,
\end{align*}
which follow easily by 
Lemma \ref{01lem:distance matrix in tensor form},
equation \eqref{02eqn:basic equation (1)} is brought into
\begin{align*}
&D_G\xi_o\otimes \begin{bmatrix} 1 \\ \bm{1} \end{bmatrix}
 +J\xi_o\otimes \begin{bmatrix} 0 \\ \bm{1} \end{bmatrix}
 +\sum_{x\in V_2} 
  D_G\xi_x\otimes \begin{bmatrix} 1 \\ \mathbf{1} \end{bmatrix}
 +\sum_{x\in V_2} 
  \xi_x\otimes \begin{bmatrix} 0 \\ -(A_H+2)e_x \end{bmatrix} 
\\
&\qquad \qquad
 +\sum_{x\in V_2} 
  J\xi_x\otimes \begin{bmatrix} 1 \\ 2\mathbf{1} \end{bmatrix}
-\lambda \xi_o\otimes \begin{bmatrix} 1 \\ 0 \end{bmatrix}
-\sum_{x\in V_2} 
  \lambda \xi_x\otimes \begin{bmatrix} 0 \\ e_x \end{bmatrix}
=\frac{\mu}{2}\,
 \mathbf{1}\otimes \begin{bmatrix} 1 \\ \mathbf{1} \end{bmatrix}.
\end{align*}
Comparing the coefficients with respect to the canonical basis
\eqref{02eqn:canonical base}, 
equation \eqref{02eqn:basic equation (1)} is equivalently
brought into the set of equations:
\begin{align}
&(D_G-\lambda I)\xi_o 
+\sum_{x\in V_2}(D_G\xi_x+J\xi_x)=\frac{\mu}{2}\,\bm{1},
\label{03eqn:main equation (01)}\\
&(D_G+J)\xi_o 
+\sum_{y\in V_2}D_G\xi_y
+\sum_{y\in V_2}2J\xi_y
\nonumber \\
&\qquad\qquad +\sum_{y\in V_2} \langle e_x, -(A_H+2)e_y\rangle \xi_y
-\lambda\xi_x
=\frac{\mu}{2}\,\mathbf{1},
\qquad x\in V_2.
\label{03eqn:main equation (02)} 
\end{align}
Since \eqref{02eqn:basic equation (012)} is obtained
by subtracting \eqref{03eqn:main equation (01)} from
\eqref{03eqn:main equation (02)},
the set of equations \eqref{03eqn:main equation (01)} and
\eqref{03eqn:main equation (02)} are equivalent to
\eqref{02eqn:basic equation (1)}.
\end{proof}

\begin{proposition}\label{03prop:reformulated basic equations}
The set $\lambda(\mathcal{S})$ coincides with
the set of $\lambda\in\mathbb{R}$ appearing
as a solution $(\xi_o,\{\xi_x\,;\, x \in V_2\}, \lambda, \mu)$
to the following equations:
\begin{align}
&(D_G-J-\lambda)\xi_o 
+\sum_{x\in V_2}D_G\xi_x=\frac{\mu}{2}\,\mathbf{1},
\label{02eqn:basic equation (11)}
\\
&\lambda\xi_o
=\sum_{y\in V_2} \langle e_x, (A_H+2+\lambda)e_y\rangle \xi_y\,,
\qquad x\in V_2,
\label{02eqn:basic equation (12)}
\\
&\langle \bm{1}, \xi_o\rangle
+\sum_{x\in V_2} \langle \bm{1}, \xi_x\rangle=0.
\label{02eqn:basic equation (13)}
\\
&\langle \xi_o, \xi_o\rangle
+\sum_{x\in V_2} \langle \xi_x, \xi_x\rangle=1.
\label{02eqn:basic equation (14)}
\end{align}
\end{proposition}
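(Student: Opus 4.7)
The plan is to show that the system \eqref{02eqn:basic equation (11)}--\eqref{02eqn:basic equation (14)} is merely a rewriting of the original system \eqref{02eqn:basic equation (1)}--\eqref{02eqn:basic equation (3)} after substituting the canonical expansion \eqref{02eqn:expansion of f in C(V)}. The only nontrivial simplification is to absorb the $J\xi_o + \sum_x J\xi_x$ terms appearing in \eqref{02eqn:basic equation (011)}--\eqref{02eqn:basic equation (012)} by invoking the orthogonality constraint \eqref{02eqn:basic equation (2)}; this reflects the fact that $Jv=\langle\bm{1},v\rangle\bm{1}$ vanishes precisely when $v\perp \bm{1}$.

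First I would translate the two scalar conditions. Using \eqref{02eqn:C(V) in tensor form}, the all-one vector decomposes as $\bm{1}=\bm{1}_{V_1}\otimes\begin{bmatrix}1\\0\end{bmatrix}+\sum_{x\in V_2}\bm{1}_{V_1}\otimes\begin{bmatrix}0\\e_x\end{bmatrix}$, and the canonical basis \eqref{02eqn:canonical base} is orthonormal in $\mathbb{R}\oplus C(V_2)$. Hence, for $f$ as in \eqref{02eqn:expansion of f in C(V)}, one obtains at once $\langle\bm{1},f\rangle=\langle\bm{1},\xi_o\rangle+\sum_{x\in V_2}\langle\bm{1},\xi_x\rangle$ and $\langle f,f\rangle=\langle\xi_o,\xi_o\rangle+\sum_{x\in V_2}\langle\xi_x,\xi_x\rangle$, so that \eqref{02eqn:basic equation (2)} and \eqref{02eqn:basic equation (3)} become exactly \eqref{02eqn:basic equation (13)} and \eqref{02eqn:basic equation (14)}.

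Next I would handle the matrix equation \eqref{02eqn:basic equation (1)}. By Lemma \ref{02lem:equivalent form of 2.10} it is equivalent to \eqref{02eqn:basic equation (011)}--\eqref{02eqn:basic equation (012)}. Assume \eqref{02eqn:basic equation (13)} holds; then $\langle\bm{1},\xi_o+\sum_{x\in V_2}\xi_x\rangle=0$, whence
\begin{equation*}
J\xi_o+\sum_{x\in V_2}J\xi_x
=\Bigl\langle\bm{1},\,\xi_o+\sum_{x\in V_2}\xi_x\Bigr\rangle\bm{1}=0.
\end{equation*}
Subtracting this vanishing quantity from \eqref{02eqn:basic equation (011)} converts $(D_G-\lambda I)\xi_o+\sum_x(D_G\xi_x+J\xi_x)$ into $(D_G-J-\lambda I)\xi_o+\sum_x D_G\xi_x$, yielding \eqref{02eqn:basic equation (11)}; and subtracting it from the rewriting $\lambda\xi_o+(J\xi_o+\sum_y J\xi_y)-\sum_y\langle e_x,(A_H+2+\lambda)e_y\rangle\xi_y=0$ of \eqref{02eqn:basic equation (012)} yields \eqref{02eqn:basic equation (12)}. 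The reverse direction is obtained by adding the same zero vector, so, conditional on \eqref{02eqn:basic equation (13)}, the pairs \{\eqref{02eqn:basic equation (011)},\eqref{02eqn:basic equation (012)}\} and \{\eqref{02eqn:basic equation (11)},\eqref{02eqn:basic equation (12)}\} define the same solution set.

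Combining these two reductions with Proposition \ref{02prop:starting expression for QEC} will give the claim. There is no real obstacle beyond this bookkeeping; the only point requiring attention is that the equivalence between \eqref{02eqn:basic equation (011)}--\eqref{02eqn:basic equation (012)} and \eqref{02eqn:basic equation (11)}--\eqref{02eqn:basic equation (12)} is not valid in isolation but only after \eqref{02eqn:basic equation (13)} is imposed, so the argument must be organized as an equivalence of systems rather than of individual equations.
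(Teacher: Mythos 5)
Your proposal is correct and follows essentially the same route as the paper: translate \eqref{02eqn:basic equation (2)}--\eqref{02eqn:basic equation (3)} into \eqref{02eqn:basic equation (13)}--\eqref{02eqn:basic equation (14)} via the orthonormal expansion, invoke Lemma \ref{02lem:equivalent form of 2.10}, and then use the identity $\sum_{x\in V_2}J\xi_x=-J\xi_o$ (a consequence of \eqref{02eqn:basic equation (13)}) to pass between \eqref{02eqn:basic equation (011)}--\eqref{02eqn:basic equation (012)} and \eqref{02eqn:basic equation (11)}--\eqref{02eqn:basic equation (12)}. Your closing remark that this last equivalence holds only as an equivalence of systems conditional on \eqref{02eqn:basic equation (13)} is exactly the point the paper's proof also relies on.
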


\begin{proof}
Using \eqref{02eqn:expansion of f in C(V)}, equations
\eqref{02eqn:basic equation (2)} and \eqref{02eqn:basic equation (3)}
are equivalently brought into
\eqref{02eqn:basic equation (13)}
and \eqref{02eqn:basic equation (14)}, respectively.
Then, we see from  Lemma \ref{02lem:equivalent form of 2.10}
that equations 
\eqref{02eqn:basic equation (1)}--\eqref{02eqn:basic equation (3)}
are equivalent to
\eqref{02eqn:basic equation (011)},
\eqref{02eqn:basic equation (012)},
\eqref{02eqn:basic equation (13)} and 
\eqref{02eqn:basic equation (14)}.
Here we note that \eqref{02eqn:basic equation (13)} is equivalent to
\begin{equation}\label{03eqn:main equation (3)}
\sum_{x\in V_2}J\xi_x
=\sum_{x\in V_2}\langle \mathbf{1},\xi_x \rangle\mathbf{1}
=-\langle \mathbf{1},\xi_o \rangle\mathbf{1}
=-J\xi_o\,.
\end{equation}
Then, \eqref{02eqn:basic equation (11)}
and \eqref{02eqn:basic equation (12)} follow
by applying \eqref{03eqn:main equation (3)}
to \eqref{02eqn:basic equation (011)} and
\eqref{02eqn:basic equation (012)}, respectively.
Equations
\eqref{02eqn:basic equation (1)}--\eqref{02eqn:basic equation (3)}
being transformed into 
\eqref{02eqn:basic equation (11)}--\eqref{02eqn:basic equation (14)},
we see that $\lambda(\mathcal{S})$ coincides with
the set of $\lambda\in\mathbb{R}$ appearing
as a solution $(\xi_o,\{\xi_x\,;\, x\in V_2\}, \lambda, \mu)$
to equations
\eqref{02eqn:basic equation (11)}--\eqref{02eqn:basic equation (14)}.
\end{proof}

\begin{remark}\normalfont
Equation \eqref{02eqn:basic equation (1)} with $\mu=0$ is
nothing else but the eigen-equation associated with $D$.
Then, we see from Lemma \ref{02lem:equivalent form of 2.10}
that the eigenvalues $\lambda$ of $D$ are obtained
from the solutions $(\lambda,\xi_o,\{\xi_x\,;\,x\in V_2\})$ to
the equations \eqref{02eqn:basic equation (011)}
and \eqref{02eqn:basic equation (012)} with $\mu=0$.
The distance eigenvalues of a corona graph $G\odot H$ of
special type are determined in \cite{Indulal-Stevanovic2015}
by solving the eigen-equation in a form essentially
the same as \eqref{02eqn:basic equation (011)}
and \eqref{02eqn:basic equation (012)} with $\mu=0$.
\end{remark}

%%%%%%%%%%%%%%%%%%%%%%%%%%%%%%%%%%%%%%%%%%%%%
\section{Two Key Functions Associated with a Symmetric Matrix}

This section is devoted to general consideration.
With a symmetric matrix $A\in M_n(\mathbb{R})$, $n\ge1$, we
associate two real functions defined by
\begin{align}
\omega_A(\lambda)
&= 1+\lambda \langle\mathbf{1},(A+2+\lambda)^{-1}\mathbf{1}\rangle,
\label{03eqn:def of omega(lambda)}
\\[6pt]
\psi_A(\lambda)
&=\frac{\lambda}{\omega_A(\lambda)},
\label{03eqn:def of psi(lambda) original}
\end{align}
of which the precise domains and properties 
will be clarified in the discussion below.

\subsection{Main eigenvalues}
\label{03sec:Main eigenvalues}

Let $A\in M_n(\mathbb{R})$, $n\ge1$, be a symmetric matrix.
The set of distinct eigenvalues of $A$ is denoted by $\mathrm{ev}(A)$.
For an eigenvalue $\alpha\in\mathrm{ev}(A)$ we denote by
$E_\alpha$ the orthogonal projection onto the 
associated eigenspace.
By the spectral decomposition we come to
\begin{equation}\label{03eqn:spectral decomposition of A}
A=\sum_{\alpha\in\mathrm{ev}(A)} \alpha E_\alpha\,,
\qquad
\sum_{\alpha\in\mathrm{ev}(A)}E_\alpha =I,
\qquad
E_\alpha E_{\alpha^\prime}=\delta_{\alpha\alpha^\prime} E_\alpha\,.
\end{equation}
By convention we set $E_\alpha=O$ for $\alpha\not\in\mathrm{ev}(A)$.

Following Cvetkovi\'c \cite{Cvetkovic1978,Cvetkovic-Rowlinson-Simic2010}
an eigenvalue $\alpha\in\mathrm{ev}(A)$ is called
a \textit{main eigenvalue} if
there exists a non-zero vector $v\in \mathbb{R}^n$ such that
$Av=\alpha v$ and $\langle \mathbf{1}, v\rangle\neq0$,
or equivalently if $E_\alpha\mathbf{1}\neq0$.
Let $\mathrm{ev}_{\mathrm{M}}(A)$ denote the set of main eigenvalues of $A$.
It follows from the second relation in
\eqref{03eqn:spectral decomposition of A} that
$\mathrm{ev}_{\mathrm{M}}(A)\neq\emptyset$.

%%%%%%%%%%%%%%%%%%%%%%%%%%%%%%%%%%%%%%%%%%%%%%%%%%%%
\subsection{The $\omega$-function associated with a symmetric matrix}

Applying the spectral decomposition 
\eqref{03eqn:spectral decomposition of A} to
\eqref{03eqn:def of omega(lambda)}, we obtain
\begin{align}
\omega_A(\lambda)
&=1+\sum_{\alpha\in\mathrm{ev}_{\mathrm{M}}(A)} 
 \frac{\lambda}{\alpha+2+\lambda} \,\|E_\alpha\mathbf{1}\|^2
\nonumber \\
&=1+\|E_{-2}\mathbf{1}\|^2
 +\sum_{\alpha\in\mathrm{ev}_{\mathrm{M}}(A)\backslash\{-2\}} 
 \frac{\lambda}{\alpha+2+\lambda} \,\|E_\alpha\mathbf{1}\|^2,
\label{03eqn:omega(lambda) (21)}
\end{align}
where $\|E_{-2}\mathbf{1}\|>0$ if $-2\in\mathrm{ev}_M(A)$
and $\|E_{-2}\mathbf{1}\|=0$ otherwise.
As usual, the last sum is understood to be zero when
$\mathrm{ev}_{\mathrm{M}}(A)\backslash\{-2\}=\emptyset$.
We observe that the last expression in \eqref{03eqn:omega(lambda) (21)}
is a real analytic function on the domain
\begin{equation}\label{03eqn:domain of omega}
\mathbb{R}\backslash
\{-\alpha-2\,;\,\alpha\in\mathrm{ev}_{\mathrm{M}}(A)\backslash\{-2\}\},
\end{equation}
which has a simple pole at $\lambda=-\alpha-2$ for each
$\alpha\in\mathrm{ev}_{\mathrm{M}}(A)\backslash\{-2\}$.
From now on, we understand $\omega_A(\lambda)$ as defined in this way.
In other words, $\omega_A(\lambda)$ is the unique maximal
analytic extension satisfying \eqref{03eqn:def of omega(lambda)}
for $\lambda\in\mathbb{R}$ with $\det(A+2+\lambda)\neq0$.
For convenience we call $\omega_A(\lambda)$ the 
\textit{$\omega$-function} associated with a symmetric matrix $A$.
The number of poles of $\omega_A(\lambda)$ is denoted by
\begin{equation}\label{03eqn:def of k_A}
k_A=|\mathrm{ev}_{\mathrm{M}}(A)\backslash\{-2\}|.
\end{equation}

Since $\mathrm{ev}_{\mathrm{M}}(A)\neq\emptyset$ in general,
we see that $k_A=0$ if and only if $\mathrm{ev}_{\mathrm{M}}(A)=\{-2\}$.
In that case, from \eqref{03eqn:omega(lambda) (21)} we have
\begin{equation}\label{03eqn:omega(lambda)=const}
\omega_A(\lambda)
=1+\|E_{-2}\mathbf{1}\|^2
=1+\|\mathbf{1}\|^2
=1+n\quad\text{(constant)}.
\end{equation}
For example, $A=-(2/n)J_n\in M_n(\mathbb{R})$ falls
into this situation.
We next note that $k_A\ge1$ is equivalent to
$\mathrm{ev}_{\mathrm{M}}(A)\backslash\{-2\}\neq\emptyset$.
Since $\mathrm{ev}_{\mathrm{M}}(A)\neq\emptyset$ in general,
$k_A\ge1$ is also equivalent to $\mathrm{ev}_{\mathrm{M}}(A)\neq\{-2\}$.
In that case, we see directly from \eqref{03eqn:omega(lambda) (21)}
that $\omega_A(\lambda)$ is a rational function of the form:
\begin{equation}\label{03eqn:omega(lambda) as rational function}
\omega_A(\lambda)
=p(\lambda)\prod_{\alpha\in\mathrm{ev}_{\mathrm{M}}(A)\backslash\{-2\}}
 (\alpha+2+\lambda)^{-1},
\end{equation}
where $p(\lambda)$ is a polynomial of order $k_A$ with
the leading coefficient $n+1$.

\begin{lemma}\label{03lem:behaviour of omega(lambda)}
Let $A\in M_n(\mathbb{R})$ be a symmetric matrix
and $\omega_A(\lambda)$ the associated $\omega$-function
defined as above. 
\begin{enumerate}
\item[\upshape (1)] 
$\omega_A(0)=1+\|E_{-2}\mathbf{1}\|^2$.
\item[\upshape (2)] 
$\displaystyle\lim_{\lambda\rightarrow\pm\infty}\omega_A(\lambda)=n+1$.
\item[\upshape (3)]
For $\alpha\in\mathrm{ev}_{\mathrm{M}}(A)\backslash\{-2\}$ with $\alpha+2>0$
we have
\[
\lim_{\lambda\downarrow -\alpha-2}\omega_A(\lambda)=-\infty,
\qquad
\lim_{\lambda\uparrow -\alpha-2}\omega_A(\lambda)=+\infty.
\]
\item[\upshape (4)]
For $\alpha\in\mathrm{ev}_{\mathrm{M}}(A)\backslash\{-2\}$ with $\alpha+2<0$
we have
\[
\lim_{\lambda\downarrow -\alpha-2}\omega_A(\lambda)=+\infty,
\qquad
\lim_{\lambda\uparrow -\alpha-2}\omega_A(\lambda)=-\infty.
\]
\end{enumerate}
\end{lemma}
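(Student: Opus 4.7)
All four assertions follow directly from the explicit spectral representation

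\begin{equation*}
\omega_A(\lambda)
=1+\|E_{-2}\mathbf{1}\|^2
 +\sum_{\alpha\in\mathrm{ev}_{\mathrm{M}}(A)\backslash\{-2\}}
 \frac{\lambda}{\alpha+2+\lambda}\,\|E_\alpha\mathbf{1}\|^2
\end{equation*}

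obtained in \eqref{03eqn:omega(lambda) (21)}. Thus the entire proof is an exercise in reading off the behavior of this rational function term by term; no new computation is required.

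For assertion (1) I would simply substitute $\lambda=0$: each term in the sum carries a factor $\lambda$ and therefore vanishes, leaving $\omega_A(0)=1+\|E_{-2}\mathbf{1}\|^2$. For assertion (2) I would note that $\lambda/(\alpha+2+\lambda)\to 1$ as $\lambda\to\pm\infty$ for every fixed $\alpha$, so that
\begin{equation*}
\lim_{\lambda\to\pm\infty}\omega_A(\lambda)
=1+\|E_{-2}\mathbf{1}\|^2
 +\sum_{\alpha\in\mathrm{ev}_{\mathrm{M}}(A)\backslash\{-2\}}\|E_\alpha\mathbf{1}\|^2
=1+\sum_{\alpha\in\mathrm{ev}(A)}\|E_\alpha\mathbf{1}\|^2,
\end{equation*}
where I have used that $E_\alpha\mathbf{1}=0$ for non-main $\alpha$ to extend the sum. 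Parseval applied to the completeness relation $\sum_\alpha E_\alpha=I$ in \eqref{03eqn:spectral decomposition of A} then gives $\sum_\alpha\|E_\alpha\mathbf{1}\|^2=\|\mathbf{1}\|^2=n$, whence the limit equals $n+1$.

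For assertions (3) and (4), fix $\alpha\in\mathrm{ev}_{\mathrm{M}}(A)\backslash\{-2\}$ and isolate the corresponding singular term; the remaining terms are bounded near $\lambda=-\alpha-2$, so the limits are controlled by the sign of
\begin{equation*}
\frac{\lambda\,\|E_\alpha\mathbf{1}\|^2}{\alpha+2+\lambda},
\qquad
\|E_\alpha\mathbf{1}\|^2>0.
\end{equation*}
In case (3), $\alpha+2>0$ forces $\lambda\approx-\alpha-2<0$, so the numerator is negative; as $\lambda\downarrow-\alpha-2$ the denominator tends to $0^+$ giving $-\infty$, while as $\lambda\uparrow-\alpha-2$ the denominator tends to $0^-$ giving $+\infty$. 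In case (4), $\alpha+2<0$ forces $\lambda\approx-\alpha-2>0$, and the same sign analysis with positive numerator yields the reversed limits. The only delicate point, and the place where I would be most careful, is making sure that the directions $\downarrow$ and $\uparrow$ are correctly matched with the signs of $\alpha+2+\lambda$ near the pole; once the bookkeeping of signs is laid out, the claims are immediate.
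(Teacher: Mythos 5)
Your proof is correct and takes exactly the approach the paper intends: the paper's own proof of this lemma is the single line ``Straightforward from the expression in \eqref{03eqn:omega(lambda) (21)}'', and your argument simply spells out those straightforward details (substitution at $\lambda=0$, termwise limits at $\pm\infty$ with Parseval, and the sign analysis of the isolated singular term near each pole), all of which check out.
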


\begin{proof}
Straightforward from the expression in
\eqref{03eqn:omega(lambda) (21)}.
\end{proof}

We will examine the zeroes of $\omega_A(\lambda)$.
From \eqref{03eqn:def of k_A} we see that
\[
|\mathrm{ev}_{\mathrm{M}}(A)|=
\begin{cases}
k_A, & \text{if $-2\not\in \mathrm{ev}_{\mathrm{M}}(A)$}, \\
k_A+1, & \text{if $-2\in \mathrm{ev}_{\mathrm{M}}(A)$}.
\end{cases}
\]
If $k_A=0$, or equivalently if
$\mathrm{ev}_{\mathrm{M}}(A)=\{-2\}$,
then $\omega_A(\lambda)$ has no zero 
by \eqref{03eqn:omega(lambda)=const}.

\begin{lemma}\label{03lem:zeroes of omega(lambda) (1)}
Assume that $k_A\ge1$ and $-2\not\in\mathrm{ev}_{\mathrm{M}}(A)$.
Arrange the members in $\mathrm{ev}_{\mathrm{M}}(A)\cup \{-2\}$
in ascending order as
\[
\mathrm{ev}_{\mathrm{M}}(A)\cup \{-2\}
=\{\alpha_1<\alpha_2<\dots<\alpha_{k_A+1}\}.
\]
Then, for each $1\le i\le k_A$ the interval
$(-\alpha_{i+1}-2,-\alpha_i-2)$
contains a unique zero of $\omega_A(\lambda)$.
Moreover, those $k_A$ distinct real zeroes
exhaust the zeroes of $\omega_A(\lambda)$.
\end{lemma}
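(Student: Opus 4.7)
The plan is to combine a degree count for $\omega_A(\lambda)$ as a rational function with sign changes on each interval obtained from the intermediate value theorem together with Lemma \ref{03lem:behaviour of omega(lambda)}.

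First, starting from \eqref{03eqn:omega(lambda) as rational function} together with the identity $\sum_{\alpha\in\mathrm{ev}_{\mathrm{M}}(A)}\|E_\alpha\mathbf{1}\|^2=\|\mathbf{1}\|^2=n$, I would verify that the polynomial $p(\lambda)$ appearing there has degree exactly $k_A$ with leading coefficient $n+1$.  In particular, $\omega_A(\lambda)$ has at most $k_A$ zeros (counted with multiplicity) in $\mathbb{C}$.

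Next, under the assumption $-2\notin\mathrm{ev}_{\mathrm{M}}(A)$ one of the listed values, say $\alpha_j=-2$, is \emph{not} a main eigenvalue, and $-\alpha_j-2=0$ is the unique member of $\{-\alpha_i-2\}_{i=1}^{k_A+1}$ that fails to be a pole of $\omega_A$.  Part (1) of Lemma \ref{03lem:behaviour of omega(lambda)} combined with $\|E_{-2}\mathbf{1}\|=0$ then gives $\omega_A(0)=1$.  I would then run through the $k_A$ intervals $(-\alpha_{i+1}-2,-\alpha_i-2)$ according to whether each endpoint is a pole or equals $0$.  When both endpoints are poles, parts (3) and (4) of Lemma \ref{03lem:behaviour of omega(lambda)} yield opposite infinities at the two ends (the precise sign pattern depending on whether the relevant main eigenvalues lie above or below $-2$, which in the ordering corresponds to $i+1\le j-1$ or $i\ge j+1$).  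When one endpoint is the non-pole $\lambda=0$, which occurs for $i=j-1$ and $i=j$, the value $\omega_A(0)=1>0$ replaces one of the infinities; at the other (pole) end the limit turns out to be $-\infty$, so a sign change still occurs.  In every case the intermediate value theorem supplies at least one zero in the open interval.

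Combining the two counts finishes the argument: the $k_A$ intervals are pairwise disjoint and each contains at least one zero, whereas the first step bounds the total number of zeros of $\omega_A$ by $k_A$.  Hence each interval contains exactly one (simple) zero and these zeros exhaust the zero set of $\omega_A(\lambda)$.  The main obstacle is the bookkeeping at $\lambda=0$: one must keep straight that $0$ appears as an endpoint of one or two of the intervals depending on whether $j=1$, $j=k_A+1$, or $1<j<k_A+1$, yet is a regular point rather than a pole, so the sign at that end is controlled by the finite value $\omega_A(0)=1$ rather than by $\pm\infty$.  Once this distinction is handled, the case analysis for the remaining endpoints is routine.
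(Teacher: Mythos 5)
Your proposal is correct and follows essentially the same strategy as the paper: bound the number of zeroes by $k_A$ via the rational-function expression \eqref{03eqn:omega(lambda) as rational function}, then use the intermediate value theorem on each interval $(-\alpha_{i+1}-2,-\alpha_i-2)$ with the limits from Lemma \ref{03lem:behaviour of omega(lambda)} and the value $\omega_A(0)=1$ at the non-pole endpoint. The only difference is cosmetic: you organize the case analysis by the type of interval endpoints, whereas the paper splits into cases according to whether $-2$ is the smallest, the largest, or an interior member of the ordered list.
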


\begin{proof}
For simplicity we write $k=k_A$.
Since $\omega_A(\lambda)$ is a rational function
as in \eqref{03eqn:omega(lambda) as rational function}
with $p(\lambda)$ being a polynomial of order $k$,
$\omega_A(\lambda)$ has at most $k$ distinct zeroes
in the complex plane.
Therefore, for the assertion it is sufficient to show that
each interval $(-\alpha_{i+1}-2,-\alpha_i-2)$
for $1\le i\le k$
contains at least one zero of $\omega_A(\lambda)$.

(Case 1) $\alpha_1=-2$. 
In this case, we have $-\alpha_k-2<\dotsb<-\alpha_2-2<0$.
By definition,
$\omega_A(\lambda)$ is continuous on the interval
$(-\alpha_{i+1}-2,-\alpha_i-2)$ for each $2\le i\le k$.
Moreover, 
we see from Lemma \ref{03lem:behaviour of omega(lambda)} (3) that
\[
\lim_{\lambda\downarrow -\alpha_{i+1}-2}\omega_A(\lambda)=-\infty,
\qquad
\lim_{\lambda\uparrow -\alpha_i-2}\omega_A(\lambda)=+\infty.
\]
Therefore, $\omega_A(\lambda)$ has at least one zero
in the interval $(-\alpha_{i+1}-2,-\alpha_i-2)$ for each $2\le i\le k$.
Moreover, since 
$\omega_A(\lambda)$ is a continuous function on
$(-\alpha_2-2,+\infty)$ and
\[
\lim_{\lambda\downarrow -\alpha_2-2}\omega(\lambda)=-\infty,
\qquad
\omega_A(0)=1,
\]
it has at least one zero
in the interval $(-\alpha_2-2,0)=(-\alpha_2-2,-\alpha_1-2)$.

(Case 2) $\alpha_k=-2$. In a similar manner as in (Case 1),
we may show that
for each $1\le i\le k$ the interval $(-\alpha_{i+1}-2,-\alpha_i-2)$
contains at least one zero of $\omega_A(\lambda)$.

(Case 3) $\alpha_1<-2<\alpha_k$. In that case we have
$\alpha_l=-2$ for some $2\le l \le k-1$.
For $1\le i\le l-2$ the function $\omega_A(\lambda)$ is continuous
on $(-\alpha_{i+1}-2,-\alpha_i-2)$ and
\[
\lim_{\lambda\downarrow -\alpha_{i+1}-2}\omega_A(\lambda)=+\infty,
\qquad
\lim_{\lambda\uparrow -\alpha_i-2}\omega_A(\lambda)=-\infty.
\]
by Lemma \ref{03lem:behaviour of omega(lambda)} (4).
Therefore, $\omega_A(\lambda)$ has at least one zero
in the $(-\alpha_{i+1}-2,-\alpha_i-2)$ for $1\le i\le l-2$.
Similarly, $\omega_A(\lambda)$ has at least one zero
in the $(-\alpha_{i+1}-2,-\alpha_i-2)$ for
$l+1\le i\le k$.
Finally, since $\omega_A(\lambda)$ is continuous
on $(-\alpha_{l+1}-2,-\alpha_{l-1}-2)$ and
\[
\lim_{\lambda\downarrow -\alpha_{l+1}-2}\omega_A(\lambda)=-\infty,
\qquad
\omega_A(0)=1,
\qquad
\lim_{\lambda\uparrow -\alpha_{l-1}-2}\omega_A(\lambda)=-\infty
\]
by Lemma \ref{03lem:behaviour of omega(lambda)},
it has at least one zero
in each of the intervals
$(-\alpha_{l+1}-2,0)=(-\alpha_{l+1}-2,-\alpha_l-2)$
and $(0,-\alpha_{l-1}-2)=(-\alpha_l-2,-\alpha_{l-1}-2)$.
Consequently, $\omega_A(\lambda)$ 
has at least one zero in each interval $(-\alpha_{i+1}-2,-\alpha_i-2)$
for $1\le i\le k$.
\end{proof}

\begin{lemma}\label{03lem:zeroes of omega(lambda) (2)}
Assume that $k_A\ge1$ and $-2\in\mathrm{ev}_{\mathrm{M}}(A)$.
Arrange the members in $\mathrm{ev}_{\mathrm{M}}(A)$
in ascending order as
\[
\mathrm{ev}_{\mathrm{M}}(A)
=\{\alpha_1<\alpha_2<\dots<\alpha_{k_A+1}\}.
\]
Then, for each $1\le i\le k_A$ the interval
$(-\alpha_{i+1}-2,-\alpha_i-2)$
contains a unique zero of $\omega_A(\lambda)$.
Moreover, those $k_A$ distinct real zeroes
exhaust the zeroes of $\omega_A(\lambda)$.
\end{lemma}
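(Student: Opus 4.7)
The plan is to mirror the proof of Lemma~\ref{03lem:zeroes of omega(lambda) (1)}, modified for the fact that one of the main eigenvalues, say $\alpha_l=-2$ for some $1\le l\le k_A+1$, now yields a regular point $-\alpha_l-2=0$ of $\omega_A$ rather than a pole. By Lemma~\ref{03lem:behaviour of omega(lambda)}\,(1) we have $\omega_A(0)=1+\|E_{-2}\mathbf{1}\|^2>0$, and by \eqref{03eqn:omega(lambda) as rational function}, $\omega_A$ is a rational function whose numerator has degree $k_A$, so it admits at most $k_A$ zeros in $\mathbb{C}$. It therefore suffices to exhibit at least one real zero in each of the $k_A$ open intervals $(-\alpha_{i+1}-2,-\alpha_i-2)$, $1\le i\le k_A$; uniqueness and exhaustiveness will then follow by counting.

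For an interval whose two endpoints are both genuine poles of $\omega_A$, that is for $i\notin\{l-1,l\}$, I would invoke Lemma~\ref{03lem:behaviour of omega(lambda)}\,(3) when $i>l$ (so that both $\alpha_i+2$ and $\alpha_{i+1}+2$ are positive) and Lemma~\ref{03lem:behaviour of omega(lambda)}\,(4) when $i<l-1$ (so that both are negative); in either case the one-sided limits at the two endpoints are $+\infty$ and $-\infty$, and the Intermediate Value Theorem applied to the continuous restriction of $\omega_A$ to the interval yields a zero. For the two remaining intervals which border the non-pole $0$, namely $(-\alpha_{l+1}-2,0)$ (from $i=l$, present unless $l=k_A+1$) and $(0,-\alpha_{l-1}-2)$ (from $i=l-1$, present unless $l=1$), the bridging role usually supplied by the second pole is taken over by the positive value $\omega_A(0)$: Lemma~\ref{03lem:behaviour of omega(lambda)}\,(3) gives $\omega_A(\lambda)\to-\infty$ as $\lambda\downarrow-\alpha_{l+1}-2$ and Lemma~\ref{03lem:behaviour of omega(lambda)}\,(4) gives $\omega_A(\lambda)\to-\infty$ as $\lambda\uparrow-\alpha_{l-1}-2$, each producing a sign change with $\omega_A(0)>0$ and hence a zero by continuity.

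Once each of the $k_A$ intervals is confirmed to contain at least one zero, the a priori bound of $k_A$ zeros forces exactly one per interval and precludes any zeros outside them. The principal obstacle is purely organizational: cleanly separating the three subcases $l=1$, $l=k_A+1$, and $1<l<k_A+1$, and matching the correct clause of Lemma~\ref{03lem:behaviour of omega(lambda)} to the sign pattern of $(\alpha_i+2,\alpha_{i+1}+2)$ on each interval. The conceptual mechanism is uniform throughout, with the strictly positive value $\omega_A(0)$ playing the same bridging role at the regular point $0$ here that the value $\omega_A(0)=1$ played in the proof of Lemma~\ref{03lem:zeroes of omega(lambda) (1)}.
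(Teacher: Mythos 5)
Your proposal is correct and follows essentially the same route as the paper, which proves this lemma by citing the at-most-$k_A$-zeros bound from the rational-function form and then repeating the interval-by-interval intermediate-value argument of Lemma~\ref{03lem:zeroes of omega(lambda) (1)}. You correctly identify the only adaptation needed, namely that the bridging value at the regular point $0$ is now $\omega_A(0)=1+\|E_{-2}\mathbf{1}\|^2>0$ rather than $1$, and your matching of Lemma~\ref{03lem:behaviour of omega(lambda)}\,(3) and (4) to the sign of $\alpha+2$ on each interval is accurate.
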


\begin{proof}
We first note that $\omega_A(\lambda)$ has at most $k_A$
distinct zeroes in the complex plane by
\eqref{03eqn:omega(lambda) as rational function}.
Therefore, for the assertion it is sufficient to show that
for each $1\le i\le k_A$ the interval $(-\alpha_{i+1}-2,-\alpha_i-2)$
contains at least one zero of $\omega_A(\lambda)$.
In fact, this claim is verified by using similar argument as
in the proof of Lemma
\ref{03lem:zeroes of omega(lambda) (1)}.
\end{proof}

Combining Lemmas \ref{03lem:zeroes of omega(lambda) (1)} 
and \ref{03lem:zeroes of omega(lambda) (2)},
we come to the following 

\begin{proposition}\label{03prop:zeroes of omega(lambda) (3)}
Let $A\in M_n(\mathbb{R})$, $n\ge1$, be a symmetric matrix
and $\omega_A(\lambda)$ the associated $\omega$-function.
Assume that $k_A=|\mathrm{ev}_{\mathrm{M}}(A)\backslash\{-2\}|\ge1$
and arrange the members of $\mathrm{ev}_{\mathrm{M}}(A)\cup \{-2\}$
in ascending order as
\begin{equation}\label{03eqn:ascending order alpha}
\mathrm{ev}_{\mathrm{M}}(A)\cup \{-2\}
=\{\alpha_1<\alpha_2<\dots<\alpha_{k_A+1}\}.
\end{equation}
Then, for each $1\le i\le k_A$ the interval
$(-\alpha_{i+1}-2,-\alpha_i-2)$
contains a unique zero of $\omega_A(\lambda)$.
Moreover, those $k_A$ distinct zeroes exhaust the zeroes of
$\omega_A(\lambda)$.
\end{proposition}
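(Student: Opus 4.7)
The plan is to obtain Proposition~\ref{03prop:zeroes of omega(lambda) (3)} as a direct amalgamation of Lemmas~\ref{03lem:zeroes of omega(lambda) (1)} and \ref{03lem:zeroes of omega(lambda) (2)} via a case split on whether $-2$ belongs to $\mathrm{ev}_{\mathrm{M}}(A)$. The role of this split is essentially bookkeeping: the set $\mathrm{ev}_{\mathrm{M}}(A) \cup \{-2\}$ is assembled differently depending on whether $-2$ must be adjoined or is already present, but in both cases its cardinality turns out to be $k_A + 1$, and the indexing in (\ref{03eqn:ascending order alpha}) is compatible with that of the two lemmas.

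First I would verify that the enumeration $\alpha_1 < \cdots < \alpha_{k_A+1}$ used in the proposition matches the one assumed in each lemma. When $-2 \notin \mathrm{ev}_{\mathrm{M}}(A)$, the union $\mathrm{ev}_{\mathrm{M}}(A) \cup \{-2\}$ contains $|\mathrm{ev}_{\mathrm{M}}(A)| + 1 = k_A + 1$ elements; when $-2 \in \mathrm{ev}_{\mathrm{M}}(A)$, the union coincides with $\mathrm{ev}_{\mathrm{M}}(A)$, which again has $k_A + 1$ elements by the definition $k_A = |\mathrm{ev}_{\mathrm{M}}(A) \setminus \{-2\}|$. So in either case the ascending enumeration is well-defined and agrees with the hypothesis of the relevant lemma.

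With this alignment in hand, I would invoke Lemma~\ref{03lem:zeroes of omega(lambda) (1)} in the first case and Lemma~\ref{03lem:zeroes of omega(lambda) (2)} in the second. Each lemma delivers exactly the two claims of the proposition: the existence of a unique zero of $\omega_A$ in every interval $(-\alpha_{i+1}-2, -\alpha_i-2)$ for $1 \le i \le k_A$, together with the exhaustion statement. The latter rests on the rational-function representation (\ref{03eqn:omega(lambda) as rational function}), whose numerator polynomial has degree $k_A$ and therefore admits at most $k_A$ complex zeroes, matching the $k_A$ real zeroes already located. The main obstacle has been dispatched inside the lemmas themselves — namely the intermediate-value arguments that pin down one sign change in each of the $k_A$ intervals while coping with the fact that $0$ may be an interior point of one of them (with $\omega_A(0) = 1 + \|E_{-2}\mathbf{1}\|^2 > 0$). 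At the level of the proposition, once the two cases are aligned nothing further is required.
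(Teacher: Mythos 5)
Your proposal is correct and coincides with the paper's own derivation: the paper states Proposition~\ref{03prop:zeroes of omega(lambda) (3)} precisely as the combination of Lemmas~\ref{03lem:zeroes of omega(lambda) (1)} and \ref{03lem:zeroes of omega(lambda) (2)} via the same case split on whether $-2\in\mathrm{ev}_{\mathrm{M}}(A)$, with the same cardinality bookkeeping showing both enumerations have $k_A+1$ elements. Nothing further is needed.
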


We are also interested in the largest zero of $\omega_A(\lambda)$.

\begin{proposition}\label{03prop:max zero of omega}
We keep the notations and assumptions in Proposition
\ref{03prop:zeroes of omega(lambda) (3)}.
Let $\lambda_*$ be the largest zero of $\omega_A(\lambda)$.
Then, we have $\lambda_*<0$ if $\min\mathrm{ev}_{\mathrm{M}}(A)\ge -2$,
and $\lambda_*>0$ otherwise.
\end{proposition}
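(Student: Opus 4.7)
The plan is to exploit the interval-localization already given by Proposition \ref{03prop:zeroes of omega(lambda) (3)}: the largest zero $\lambda_*$ must lie in the rightmost interval $(-\alpha_2-2,-\alpha_1-2)$, so I only need to determine the sign of the endpoints of that one interval, which is controlled by whether $-2$ sits at position $\alpha_1$ or at some position $\alpha_l$ with $l\ge 2$ in the ordering \eqref{03eqn:ascending order alpha}.

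First I would observe the basic dichotomy: since $-2$ always belongs to $\mathrm{ev}_{\mathrm{M}}(A)\cup\{-2\}$ by construction, the hypothesis $\min\mathrm{ev}_{\mathrm{M}}(A)\ge -2$ forces $\alpha_1=-2$, whereas $\min\mathrm{ev}_{\mathrm{M}}(A)<-2$ forces $\alpha_1<-2$ together with $\alpha_l=-2$ for some $l\ge 2$, and in particular $\alpha_2\le -2$.

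Next I would translate these facts into the signs of the endpoints of $(-\alpha_2-2,-\alpha_1-2)$. In the first case $-\alpha_1-2=0$, so this interval lies strictly to the left of zero and any point of it, in particular $\lambda_*$, is negative. In the second case $-\alpha_1-2>0$ and $-\alpha_2-2\ge 0$, so the interval lies in $[0,-\alpha_1-2)$; hence $\lambda_*>-\alpha_2-2\ge 0$. As a sanity check consistent with these two cases, one recalls from Lemma \ref{03lem:behaviour of omega(lambda)}(1) that $\omega_A(0)=1+\|E_{-2}\mathbf{1}\|^2>0$, so $\lambda=0$ is never itself a zero, and the strict inequalities $\lambda_*<0$ and $\lambda_*>0$ are not in jeopardy.

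No real obstacle is expected; the argument is essentially a two-line case split once Proposition \ref{03prop:zeroes of omega(lambda) (3)} is in hand, and the only subtlety is remembering that $-2$ must appear in the enumeration \eqref{03eqn:ascending order alpha} even when it is not a main eigenvalue, which is precisely what pins down $\alpha_1=-2$ under the first hypothesis and $\alpha_2\le -2$ under the second.
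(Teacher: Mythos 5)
Your proof is correct and follows essentially the same route as the paper: localize $\lambda_*$ in the rightmost interval $(-\alpha_2-2,-\alpha_1-2)$ via Proposition \ref{03prop:zeroes of omega(lambda) (3)}, then note that $\alpha_1=-2$ in the first case and $\alpha_1<-2$ with $\alpha_2\le -2$ in the second. The added remark that $\omega_A(0)>0$ is harmless but unnecessary, since the strict inequalities already come from the open-interval localization.
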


\begin{proof}
It follows from Proposition \ref{03prop:zeroes of omega(lambda) (3)}
that
\begin{equation}\label{03eqn:max zero lambda*}
-\alpha_{2}-2<\lambda_*<-\alpha_1-2,
\end{equation}
where $\mathrm{ev}_{\mathrm{M}}(A)\cup \{-2\}
=\{\alpha_1<\alpha_2<\dots<\alpha_{k_A+1}\}$.
If $\min\mathrm{ev}_{\mathrm{M}}(A)\ge -2$,
then $\alpha_1=-2$ and we come to $\lambda_*<-\alpha_1-2=0$
by \eqref{03eqn:max zero lambda*}.
If $\min\mathrm{ev}_{\mathrm{M}}(A)<-2$,
then we have
$\min\mathrm{ev}_{\mathrm{M}}(A)=\alpha_1<\alpha_2\le -2$
and $\lambda_*>-\alpha_{2}-2\ge0$ by \eqref{03eqn:max zero lambda*}.
\end{proof}

%%%%%%%%%%%%%%%%%%%%%%%%%%%%%%%%%%%%%%%%%%
\subsection{The $\psi$-function and its inverse function}

Let $A\in M_n(\mathbb{R})$, $n\ge1$, be a symmetric matrix
and $\omega_A(\lambda)$ the associated $\omega$-function 
introduced in the previous section.
The \textit{$\psi$-function} is defined by
\begin{equation}\label{03eqn:def of psi(lambda)}
\psi_A(\lambda)=\frac{\lambda}{\omega_A(\lambda)}\,.
\end{equation}
Obviously, $\psi_A(\lambda)$ is a real analytic function on the domain
\begin{equation}\label{03eqn:original domain of psi}
\mathbb{R}\backslash 
(\{-\alpha-2\,;\alpha\in \mathrm{ev}_{\mathrm{M}}(A)\backslash\{-2\}\}
\cup \{\omega_A(\lambda)=0\}).
\end{equation}

If $k_A=0$, that is, $\mathrm{ev}_{\mathrm{M}}(A)=\{-2\}$, 
then \eqref{03eqn:original domain of psi} is reduced to
the whole  $\mathbb{R}$ and \eqref{03eqn:def of psi(lambda)} becomes
\begin{equation}\label{03eqn:psi for evm=-2}
\psi_A(\lambda)
=\frac{\lambda}{\omega_A(\lambda)}
=\frac{\lambda}{n+1}\,.
\end{equation}
Assume that $k_A\ge1$, that is, 
$\mathrm{ev}_{\mathrm{M}}(A)\neq\{-2\}$.
For the exclusion sets of \eqref{03eqn:original domain of psi},
by Proposition \ref{03prop:zeroes of omega(lambda) (3)}
we see that
\begin{equation}\label{03eqn:disjoint exclusion sets}
\{-\alpha-2\,;\alpha\in \mathrm{ev}_{\mathrm{M}}(A)\backslash\{-2\}\}
\cap \{\omega_A(\lambda)=0\}=\emptyset,
\end{equation}
Moreover, we see from Lemma \ref{03lem:behaviour of omega(lambda)} that
\[
\lim_{\lambda\rightarrow -\alpha-2}\psi_A(\lambda)=0,
\qquad
\alpha\in\mathrm{ev}_{\mathrm{M}}(A)\backslash\{-2\}.
\]
Therefore, together with \eqref{03eqn:disjoint exclusion sets}
we see that $\psi_A(\lambda)$ is extended uniquely to
a real analytic function on
$\mathbb{R}\backslash \{\omega_A(\lambda)=0\}$.
From now on, we understand that $\psi_A(\lambda)$ is the
unique real analytic function on
$\mathbb{R}\backslash \{\omega_A(\lambda)=0\}$ which
satisfies \eqref{03eqn:def of psi(lambda)}.
In particular, we have
\begin{equation}\label{03eqn:zeroes of psi}
\psi_A(0)=0,
\qquad
\psi_A(-\alpha-2)=0, 
\quad \alpha\in\mathrm{ev}_{\mathrm{M}}(A)\backslash\{-2\},
\end{equation}
and $\psi_A(\lambda)$ has a simple pole at
each zero of $\omega_A(\lambda)$.

\begin{proposition}\label{03prop:behaviour of psi(lambda)}
Let $A\in M_n(\mathbb{R})$, $n\ge1$, be a symmetric matrix
and assume that $k_A=|\mathrm{ev}_{\mathrm{M}}(A)\backslash\{-2\}|\ge1$.
Let $\omega_A(\lambda)$ and $\psi_A(\lambda)$ be the
associated $\omega$- and $\psi$-functions, respectively.
Let $\lambda_1>\lambda_2>\dotsb>\lambda_{k_A}$ be the zeroes of
$\omega_A(\lambda)$.
Then $\psi_A(\lambda)$ is a continuous and 
strictly increasing function on
$\mathbb{R}\backslash\{\lambda_1,\lambda_2,\dots, \lambda_{k_A}\}$ and
\[
\lim_{\lambda\rightarrow+\infty}\psi_A(\lambda)=+\infty,
\quad
\lim_{\lambda\downarrow \lambda_i}\psi_A(\lambda)=-\infty,
\quad
\lim_{\lambda\uparrow \lambda_i}\psi_A(\lambda)=+\infty,
\quad
\lim_{\lambda\rightarrow-\infty}\psi_A(\lambda)=-\infty,
\]
for $1\le i\le k_A$.
In particular, $\psi_A(\lambda)$ splits into $k_A+1$ 
continuous increasing functions on the intervals
$(-\infty,\lambda_{k_A})$, $(\lambda_{k_A},\lambda_{k_A-1}),
\dots (\lambda_2,\lambda_1)$, $(\lambda_1, +\infty)$.
\end{proposition}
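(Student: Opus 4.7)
The plan is to derive all four assertions---continuity, strict monotonicity on each of the $k_A+1$ open intervals, the limits at $\pm\infty$, and the one-sided limits at each pole $\lambda_i$---from a single explicit computation of $\psi_A'$, combined with the behavior of $\omega_A$ already established.

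First, continuity on $\mathbb{R}\setminus\{\lambda_1,\dots,\lambda_{k_A}\}$ is immediate from the definition of $\psi_A$ as the maximal real analytic extension on that domain. The limits at $\pm\infty$ also follow at once: by Lemma~\ref{03lem:behaviour of omega(lambda)}(2), $\omega_A(\lambda)\to n+1>0$ as $\lambda\to\pm\infty$, so $\psi_A(\lambda)=\lambda/\omega_A(\lambda)\to\pm\infty$.

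The core step is strict monotonicity. Differentiating the spectral expression for $\omega_A$ in \eqref{03eqn:omega(lambda) (21)} term by term gives
\[
\omega_A'(\lambda)
=\sum_{\alpha\in\mathrm{ev}_{\mathrm{M}}(A)\setminus\{-2\}}
\frac{\alpha+2}{(\alpha+2+\lambda)^2}\|E_\alpha\mathbf{1}\|^2,
\]
and a short algebraic simplification yields the pleasant identity
\[
\omega_A(\lambda)-\lambda\omega_A'(\lambda)
=1+\|E_{-2}\mathbf{1}\|^2
+\sum_{\alpha\in\mathrm{ev}_{\mathrm{M}}(A)\setminus\{-2\}}
\frac{\lambda^2}{(\alpha+2+\lambda)^2}\|E_\alpha\mathbf{1}\|^2 \ge 1.
\]
Therefore
\[
\psi_A'(\lambda)
=\frac{\omega_A(\lambda)-\lambda\omega_A'(\lambda)}{\omega_A(\lambda)^2}>0
\]
wherever both sides make sense, namely on the complement of the set of poles of $\omega_A$ together with its zeros. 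Since $\psi_A$ extends real analytically across each point $\lambda=-\alpha-2$, this strict positivity of $\psi_A'$ in fact holds on the whole domain $\mathbb{R}\setminus\{\lambda_1,\dots,\lambda_{k_A}\}$, giving strict monotonicity on every connected component.

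Finally, the one-sided limits at each $\lambda_i$ follow with no further work. By the disjointness recorded in \eqref{03eqn:disjoint exclusion sets}, $\omega_A$ is continuous and vanishes to first order at $\lambda_i$, so $|\psi_A(\lambda)|\to\infty$ as $\lambda\to\lambda_i$; the signs of the one-sided limits are then forced by the monotonicity just proved, since on each bounded interval $(\lambda_{i+1},\lambda_i)$ the increasing function $\psi_A$ must run from $-\infty$ to $+\infty$ (and analogously on $(\lambda_1,+\infty)$ and $(-\infty,\lambda_{k_A})$). The only substantive step in the whole argument is the algebraic cancellation producing the manifestly positive form of $\omega_A-\lambda\omega_A'$; without that clean form, strict increase would have to be argued interval by interval via a case analysis parallel to Lemmas~\ref{03lem:zeroes of omega(lambda) (1)} and~\ref{03lem:zeroes of omega(lambda) (2)}.
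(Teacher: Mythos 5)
Your proof is correct and follows essentially the same route as the paper: the paper likewise differentiates $\psi_A$ directly and obtains $\psi_A'(\lambda)=\omega_A(\lambda)^{-2}\bigl(1+\lambda^2\|(A+2+\lambda)^{-1}\mathbf{1}\|^2\bigr)>0$, which is exactly your identity $\omega_A-\lambda\omega_A'\ge1$ written in resolvent rather than spectral-decomposition form, and then reads off the limit behaviour from Lemma~\ref{03lem:behaviour of omega(lambda)} and Proposition~\ref{03prop:zeroes of omega(lambda) (3)}. No substantive difference.
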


\begin{proof}
By differentiating $\psi_A(\lambda)$ directly, we obtain
\[
\frac{d}{d\lambda}\,\psi_A(\lambda)
=\frac{1}{\omega_A(\lambda)^2}
  \Big\{1+\lambda^2\|(A+2+\lambda)^{-1}\mathbf{1}\|^2 \Big\}>0.
\]
Therefore, $\psi_A(\lambda)$ is 
strictly increasing on $\mathbb{R}\backslash\{\omega_A(\lambda)=0\}$.
The limit behavior follows from
Lemma \ref{03lem:behaviour of omega(lambda)} and
Proposition \ref{03prop:zeroes of omega(lambda) (3)}.
\end{proof}

It follows from Proposition \ref{03prop:behaviour of psi(lambda)} that
the inverse function $\psi_A^{-1}$ is determined only as
a multivalued function (in fact, taking exactly $(k_A+1)$ values).
Let $\lambda_*=\lambda_1$ be the largest zero of $\omega_A(\lambda)$
and define $\psi_{A*}=\psi\!\restriction_{(\lambda_*,+\infty)}$.
Since $\psi_{A*}(\lambda)$ becomes a real analytic
and strictly increasing function
on $(\lambda_*,\infty)$ which varies from $-\infty$ to $+\infty$,
its inverse function 
$\psi_{A*}^{-1}(\lambda^\prime)$ is well defined
and becomes a real analytic and strictly increasing function.

\begin{proposition}\label{03prop:behaviour of psi inverse}
We keep the notations and assumptions as in
Proposition \ref{03prop:behaviour of psi(lambda)}.
\begin{enumerate}
\item[\upshape (1)] $\psi_{A*}^{-1}(\lambda^\prime)$ 
is a real analytic and strictly increasing function
on $\mathbb{R}$ such that
\[
\lim_{\lambda^\prime\rightarrow-\infty} 
\psi_{A*}^{-1}(\lambda^\prime)=\lambda_*,
\qquad
\lim_{\lambda^\prime\rightarrow+\infty} 
\psi_{A*}(\lambda^\prime)=+\infty.
\]
\item[\upshape (2)] $\psi_{A*}^{-1}(0)=-\alpha_1-2$,
where $\alpha_1=\min \mathrm{ev}_{\mathrm{M}}(A)\cup\{-2\}$.
\item[\upshape (3)] 
$\psi_{A*}^{-1}(\lambda^\prime)
=\max \{\lambda\in\mathbb{R}\,;\, \psi_A(\lambda)=\lambda^\prime\}$.
\end{enumerate}
\end{proposition}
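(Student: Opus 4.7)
The plan is to deduce all three assertions directly from the analytic and combinatorial information about $\psi_A$ already packaged in Propositions \ref{03prop:zeroes of omega(lambda) (3)}--\ref{03prop:behaviour of psi(lambda)}; there is almost no new calculation to do, only the bookkeeping needed to single out the rightmost branch of $\psi_A$.

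For part (1), I would apply Proposition \ref{03prop:behaviour of psi(lambda)} to the rightmost interval $(\lambda_*,+\infty)=(\lambda_1,+\infty)$: there $\psi_{A*}$ is real analytic and strictly increasing, with $\lim_{\lambda\downarrow\lambda_*}\psi_{A*}(\lambda)=-\infty$ and $\lim_{\lambda\to+\infty}\psi_{A*}(\lambda)=+\infty$. Hence $\psi_{A*}$ is a real-analytic bijection onto $\mathbb{R}$, and since $d\psi_A/d\lambda>0$ on its domain (as computed in the proof of Proposition \ref{03prop:behaviour of psi(lambda)}), the inverse function theorem produces a strictly increasing, real-analytic inverse $\psi_{A*}^{-1}:\mathbb{R}\to(\lambda_*,+\infty)$ whose two one-sided limits are obtained by inverting the corresponding limits of $\psi_{A*}$.

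For part (2), I would first identify the zero set of $\psi_A$. From $\psi_A(\lambda)=\lambda/\omega_A(\lambda)$ and \eqref{03eqn:zeroes of psi}, the zeroes of $\psi_A$ are exactly $\{-\alpha-2:\alpha\in\mathrm{ev}_{\mathrm{M}}(A)\cup\{-2\}\}$, i.e.\ the $k_A+1$ points $-\alpha_i-2$ with $\alpha_1<\dots<\alpha_{k_A+1}$ as in \eqref{03eqn:ascending order alpha}. The largest of these is $-\alpha_1-2$, and by the bound $\lambda_*<-\alpha_1-2$ in \eqref{03eqn:max zero lambda*} this largest zero lies in $(\lambda_*,+\infty)$. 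Because $\psi_{A*}$ is a strictly increasing bijection onto $\mathbb{R}$, it has a unique zero in $(\lambda_*,+\infty)$, which must therefore be $-\alpha_1-2$. This gives $\psi_{A*}^{-1}(0)=-\alpha_1-2$. For part (3), Proposition \ref{03prop:behaviour of psi(lambda)} tells us that on each of the $k_A+1$ intervals $(-\infty,\lambda_{k_A})$, $(\lambda_{k_A},\lambda_{k_A-1}),\ldots,(\lambda_2,\lambda_1),(\lambda_1,+\infty)$ the function $\psi_A$ is a continuous strictly increasing bijection onto $\mathbb{R}$, so the equation $\psi_A(\lambda)=\lambda'$ has exactly $k_A+1$ real solutions, one per interval. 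The largest interval is $(\lambda_1,+\infty)=(\lambda_*,+\infty)$, and its solution, being by definition $\psi_{A*}^{-1}(\lambda')$, is therefore the maximum.

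There is no genuine analytic obstacle here, since Propositions \ref{03prop:zeroes of omega(lambda) (3)}--\ref{03prop:behaviour of psi(lambda)} have already done the work; the only point requiring care is in part (2), where one should verify that the list of zeroes of $\psi_A$ is $\{-\alpha-2:\alpha\in\mathrm{ev}_{\mathrm{M}}(A)\cup\{-2\}\}$ rather than only $\{-\alpha-2:\alpha\in\mathrm{ev}_{\mathrm{M}}(A)\setminus\{-2\}\}$: the numerator factor $\lambda$ in $\psi_A=\lambda/\omega_A$ contributes the extra zero $\lambda=0=-(-2)-2$, and this extra zero must be tracked separately in the two cases $-2\in\mathrm{ev}_{\mathrm{M}}(A)$ and $-2\notin\mathrm{ev}_{\mathrm{M}}(A)$ to see that the resulting union is exactly $\{-\alpha_i-2:1\le i\le k_A+1\}$.
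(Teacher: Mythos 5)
Your argument is correct and follows essentially the same route as the paper: part (1) is read off from the monotonicity and limit behaviour in Proposition \ref{03prop:behaviour of psi(lambda)}, part (2) comes down to checking that $-\alpha_1-2$ lies in $(\lambda_*,+\infty)$ (via $\lambda_*<-\alpha_1-2$) and is a zero of $\psi_{A*}$ by \eqref{03eqn:zeroes of psi}, and part (3) is the one-solution-per-branch count. Your extra care in part (2) about merging the zero $\lambda=0$ with the zeroes $-\alpha-2$, $\alpha\in\mathrm{ev}_{\mathrm{M}}(A)\setminus\{-2\}$, is a slightly more explicit version of what the paper leaves implicit, but it is the same proof.
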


\begin{proof}
(1) Already clear as mentioned above.

(2) We note from $-\alpha_2-2<\lambda_*<-\alpha_1-2$ that
$-\alpha_1-2$ belongs to the domain of $\psi_{A*}(\lambda)$.
Then $\psi_{A*}(-\alpha_1-2)=0$ by \eqref{03eqn:zeroes of psi}
and we have $\psi_{A*}^{-1}(0)=-\alpha_1-2$.

(3) Given $\lambda^\prime\in\mathbb{R}$,
there exist $k_A+1$ distinct $\lambda\in \mathbb{R}$
satisfying $\psi_A(\lambda)=\lambda^\prime$ by
Proposition \ref{03prop:behaviour of psi(lambda)}.
Since $\psi_{A*}(\lambda)$ is chosen as the most right
branch of $\psi_A(\lambda)$,
we see that $\psi_{A*}^{-1}(\lambda^\prime)$ coincides with
the largest $\lambda$ satisfying $\psi_A(\lambda)=\lambda^\prime$.
\end{proof}

\begin{remark}
\normalfont
The assertion of Proposition \ref{03prop:behaviour of psi inverse}
remains valid also for $k_A=0$ under the convention that
$\lambda_*=-\infty$.
\end{remark}

%%%%%%%%%%%%%%%%%%%%%%%%%%%%%%%%%%%%%%%%%%%%%%%%%%%%%%%%%%%%
%%%%%%%%%%%%%%%%%%%%%%%%%%%%%%%%%%%%%%%%%%%%%%%%%%%%%%%%%%%%
\section{Main Formula for $\mathrm{QEC}(G\odot H)$}

\subsection{The $\omega$- and $\psi$-functions associated with a graph}

We first note the following

\begin{lemma}\label{04lem:main eigen is only -2}
Let $H$ be an arbitrary graph and $A_H$ the adjacency matrix.
\begin{enumerate}
\item[\upshape (1)] $\mathrm{ev}_{\mathrm{M}}(A_H)\neq\emptyset$
and $\mathrm{ev}_{\mathrm{M}}(A_H)\neq\{-2\}$.
\item[\upshape (2)] 
$\max\mathrm{ev}(A_H)\in \mathrm{ev}_{\mathrm{M}}(A_H)$,
i.e., the largest eigenvalue of $A_H$ is a main one.
\end{enumerate}
\end{lemma}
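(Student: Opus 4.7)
The plan is to combine the resolution of the identity from the spectral decomposition of $A_H$ with two elementary positivity facts for adjacency matrices: non-negativity of the degree sequence, and the Perron--Frobenius theorem for non-negative symmetric matrices.

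For part (1), I would apply the identity $\sum_{\alpha\in\mathrm{ev}(A_H)}E_\alpha=I$ from \eqref{03eqn:spectral decomposition of A} to the vector $\mathbf{1}$, yielding $\mathbf{1}=\sum_\alpha E_\alpha\mathbf{1}$. Since $|V_2|\ge 1$ means $\mathbf{1}\neq 0$, at least one term must be non-zero, which gives $\mathrm{ev}_{\mathrm{M}}(A_H)\neq\emptyset$. To exclude the possibility $\mathrm{ev}_{\mathrm{M}}(A_H)=\{-2\}$, I would argue by contradiction: if $-2$ were the unique main eigenvalue, then $E_\alpha\mathbf{1}=0$ for every $\alpha\neq -2$, forcing $\mathbf{1}=E_{-2}\mathbf{1}$ and therefore $A_H\mathbf{1}=-2\mathbf{1}$. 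Reading this coordinate-wise gives $\deg_H(x)=-2$ for every vertex $x$, which contradicts the non-negativity of vertex degrees.

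For part (2), I would exhibit a concrete non-zero eigenvector for $\lambda_{\max}:=\max\mathrm{ev}(A_H)$ that is not orthogonal to $\mathbf{1}$. Since $A_H$ is a non-negative symmetric matrix, the Perron--Frobenius theorem provides a coordinate-wise non-negative eigenvector $v\ge 0$, $v\neq 0$, associated with the spectral radius $\rho(A_H)=\lambda_{\max}$. Then $\langle\mathbf{1},v\rangle=\sum_x v_x>0$, and by the definition of main eigenvalue $\lambda_{\max}\in\mathrm{ev}_{\mathrm{M}}(A_H)$ follows at once.

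The only subtle step is the applicability of Perron--Frobenius when $H$ is disconnected (and hence $A_H$ is not irreducible). I would handle this by writing $A_H$ in block-diagonal form according to the connected components $H_1,\dots,H_r$ of $H$, selecting an index $i$ with $\lambda_{\max}(A_{H_i})=\lambda_{\max}$, applying the irreducible form of Perron--Frobenius on $H_i$ to obtain a strictly positive eigenvector on that component, and extending by zero to obtain the global $v$. No deeper obstacle is expected; the entire lemma is essentially a bookkeeping statement combining the spectral decomposition with the non-negativity of adjacency matrix entries.
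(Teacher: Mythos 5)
Your proof is correct and follows essentially the same route as the paper: the non-emptiness of $\mathrm{ev}_{\mathrm{M}}(A_H)$ via the resolution of the identity applied to $\mathbf{1}$, the contradiction $A_H\mathbf{1}=-2\mathbf{1}$ against non-negative row sums, and Perron--Frobenius for part (2). Your extra care with the disconnected case is a reasonable (if routine) addition that the paper leaves implicit.
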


\begin{proof}
(1) It is pointed out in Section \ref{03sec:Main eigenvalues}
that $\mathrm{ev}_{\mathrm{M}}(A_H)\neq\emptyset$.
If $\mathrm{ev}_{\mathrm{M}}(A)=\{-2\}$, 
we have $A\mathbf{1}=-2\mathbf{1}$.
But this contradicts that any row sum of $A_H$ is non-negative.

(2) By Perron-Frobenius theorem,
associated with the largest eigenvalue there exists
an eigenvector $f$ of which the entries are all non-negative.
Since $f$ is not orthogonal to $\bm{1}$,
the largest eigenvalue is a main eigenvalue,
see also \cite{Cvetkovic1978,Cvetkovic-Rowlinson-Simic2010}.
\end{proof}

Let $\omega_H(\lambda)$ be the $\omega$-function associated with
the adjacency matrix $A_H$ of a given graph $H$.
Following the argument in the previous section,
we know that $\omega_H(\lambda)$ is a real analytic function on
$\mathbb{R}\backslash
\{-\alpha-2\,;\, 
\alpha\in \mathrm{ev}_{\mathrm{M}}(A_H)\backslash\{-2\}\}$
satisfying
\begin{equation}\label{03eqn:def of omega(lambda) for H}
\omega_H(\lambda)
= 1+\lambda \langle\mathbf{1},(A_H+2+\lambda)^{-1}\mathbf{1}\rangle.
\end{equation}
By Lemma \ref{04lem:main eigen is only -2} (1) we have
\[
k_H=|\mathrm{ev}_{\mathrm{M}}(A_H)\backslash\{-2\}|\ge1.
\]
Moreover, we see 
from Proposition \ref{03prop:zeroes of omega(lambda) (3)} that
$\omega_H(\lambda)$ has $k_H$ distinct real zeroes,
say, $\lambda_1>\lambda_2>\dots>\lambda_{k_H}$.
Then we have
\begin{equation}\label{04eqn:interlacing of lambda and -2-alpha}
-\alpha_{i+1}-2<\lambda_i<-\alpha_{i}-2,
\qquad 1\le i\le k_H\,,
\end{equation}
where $\mathrm{ev}_{\mathrm{M}}(A_H)\cup \{-2\}
=\{\alpha_1<\alpha_2<\dots<\alpha_{k_H+1}\}$.

Let $\psi_H(\lambda)$ be the $\psi$-function associated with $A_H$,
which is the unique real analytic function
on $\mathbb{R}\backslash\{\omega_H(\lambda)=0\}
=\mathbb{R}\backslash\{\lambda_1, \lambda_2, \dots, \lambda_{k_H}\}$
satisfying
\begin{equation}\label{03eqn:def of psi(lambda) for H}
\psi_H(\lambda)=\frac{\lambda}{\omega_H(\lambda)}\,.
\end{equation}
Let $\lambda_*=\lambda_1$ be the largest zero of $\omega_H(\lambda)$
and define $\psi_{H*}=\psi_H\!\restriction_{(\lambda_*,+\infty)}$.
Then $\psi_{H*}(\lambda)$ is an increasing 
function on $(\lambda_*,\infty)$ varying from $-\infty$ to $+\infty$.

\begin{proposition}\label{04prop:inverse of psi_H}
The inverse function $\psi_{H*}^{-1}(\lambda^\prime)$ 
is an increasing function on $\mathbb{R}$
and satisfies 
\[
\lim_{\lambda^\prime\rightarrow-\infty}
\psi_{H*}^{-1}(\lambda^\prime)=\lambda_*,
\qquad
\lim_{\lambda^\prime\rightarrow+\infty}
\psi_{H*}^{-1}(\lambda^\prime)=+\infty,
\qquad
\psi_{H*}^{-1}(0)=-\alpha_1-2,
\]
where $\lambda_*$ is the largest zero of $\omega_H(\lambda)$
and $\alpha_1=\min \mathrm{ev}_{\mathrm{M}}(A_H)\cup\{-2\}$.
Moreover,
\begin{equation}\label{04eqn:inverse of psi_H}
\psi_{H*}^{-1}(\lambda^\prime)
=\max\{\lambda\in\mathbb{R}\,;\, \psi_H(\lambda)=\lambda^\prime\}.
\end{equation}
\end{proposition}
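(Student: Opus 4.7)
The plan is to specialize Proposition \ref{03prop:behaviour of psi inverse} to the case where the symmetric matrix is the adjacency matrix $A_H$ of the graph $H$. The first step is to verify that the standing hypothesis $k_A\ge 1$ of that proposition is met in the graph setting. This is exactly the content of Lemma \ref{04lem:main eigen is only -2}(1), which guarantees $\mathrm{ev}_{\mathrm{M}}(A_H)\neq\{-2\}$, hence $k_H=|\mathrm{ev}_{\mathrm{M}}(A_H)\backslash\{-2\}|\ge1$.

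With this hypothesis secured, the entire machinery of Section 3 applies to $A=A_H$. In particular, by Proposition \ref{03prop:zeroes of omega(lambda) (3)}, $\omega_H(\lambda)$ has exactly $k_H$ distinct real zeroes $\lambda_1>\dots>\lambda_{k_H}$, and by Proposition \ref{03prop:behaviour of psi(lambda)}, $\psi_H(\lambda)$ decomposes into $k_H+1$ strictly increasing continuous branches. The rightmost branch $\psi_{H*}=\psi_H\!\restriction_{(\lambda_*,+\infty)}$ varies from $-\infty$ to $+\infty$, so its inverse $\psi_{H*}^{-1}$ is well defined as a real-valued function on $\mathbb{R}$.

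Then each of the claimed properties follows by reading off the corresponding part of Proposition \ref{03prop:behaviour of psi inverse} with $A=A_H$: part (1) gives monotonicity of $\psi_{H*}^{-1}$ together with the two limit behaviors at $\pm\infty$; part (2) gives $\psi_{H*}^{-1}(0)=-\alpha_1-2$, where $\alpha_1=\min\mathrm{ev}_{\mathrm{M}}(A_H)\cup\{-2\}$; and part (3) gives the characterization \eqref{04eqn:inverse of psi_H} of $\psi_{H*}^{-1}(\lambda^\prime)$ as the largest real solution of $\psi_H(\lambda)=\lambda^\prime$.

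There is essentially no obstacle: the proposition is purely a restatement of the general result of Section 3 in the graph-theoretic language, once Lemma \ref{04lem:main eigen is only -2}(1) removes the degenerate case $k_H=0$. The only bookkeeping worth spelling out is the identification of $\alpha_1$ in the two formulations, which is tautological since $\mathrm{ev}_{\mathrm{M}}(A_H)\cup\{-2\}$ is used in both places.
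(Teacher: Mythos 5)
Your proposal is correct and matches the paper's proof, which simply cites Proposition \ref{03prop:behaviour of psi inverse}; the paper establishes $k_H\ge1$ via Lemma \ref{04lem:main eigen is only -2}(1) in the text immediately preceding the proposition, exactly as you do. No issues.
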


\begin{proof}
Immediate from 
Proposition \ref{03prop:behaviour of psi inverse}.
\end{proof}

%%%%%%%%%%%%%%%%%%%%%%%%%%%%%%%%%%%%%%%%%%%%%%%%%%%
\subsection{Calculating $\mathrm{QEC}(G\odot H)$}

We come back to the main problem to calculate
$\mathrm{QEC}(G\odot H)=\max\lambda(\mathcal{S})$,
see Propositions \ref{02prop:starting expression for QEC} and 
\ref{03prop:reformulated basic equations}.
Using the $\omega$-function, 
we divide $\lambda(\mathcal{S})$ into three parts:
\begin{align}
\Gamma_1
&=\lambda(\mathcal{S})
  \cap\{\det(A_H+2+\lambda)\neq0\}
  \cap\{\omega_H(\lambda)\neq0\},
\label{03eqn:def of Gamma1}\\
\Gamma_2
&=\lambda(\mathcal{S})
  \cap\{\det(A_H+2+\lambda)\neq0\}
  \cap\{\omega_H(\lambda)=0\},
\label{03eqn:def of Gamma2}\\
\Gamma_3
&=\lambda(\mathcal{S})
  \cap\{\det(A_H+2+\lambda)=0\}.
\label{03eqn:def of Gamma3}
\end{align}
Since $\lambda(\mathcal{S})=\Gamma_1\cup \Gamma_2\cup \Gamma_3$
(disjoint union), we have
\begin{equation}\label{04eqn:our goal divided}
\mathrm{QEC}(G\odot H)
=\max\,\Gamma_1\cup \Gamma_2\cup\Gamma_3\,.
\end{equation}
This subsection is devoted to 
$\max\,\Gamma_1\cup \Gamma_2$.

\begin{lemma}\label{03lem: basic equations under det neq 0}
Assume that $\det(A_H+2+\lambda)\neq0$.
Then equations 
\eqref{02eqn:basic equation (11)}--\eqref{02eqn:basic equation (14)}
are equivalent to the following ones:
\begin{align}
&\omega_H(\lambda)D_G\xi_o-(J_G+\lambda)\xi_o 
=\frac{\mu}{2}\,\mathbf{1},
\label{03eqn:main equation (31)} \\
&\xi_x
=\lambda \langle e_x, (A_H+2+\lambda)^{-1}\mathbf{1}\rangle\xi_o,
\qquad x\in V_2,
\label{03eqn:main equation (32)} \\
&\omega_H(\lambda)J_G\xi_o=0,
\label{03eqn:main equation (33)} \\
&\langle \xi_o, \xi_o\rangle
+\sum_{x\in V_2} \langle \xi_x, \xi_x\rangle=1.
\label{03eqn:main equation (34)}
\end{align}
Therefore, $\Gamma_1\cup \Gamma_2$ coincides with
the set of $\lambda\in\mathbb{R}$ appearing 
as a solution $(\xi_o,\{\xi_x\,;\, x \in V_2\}, \lambda, \mu)$
to equations
\eqref{03eqn:main equation (31)}--\eqref{03eqn:main equation (34)}.
\end{lemma}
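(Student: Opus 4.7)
The plan is to use the hypothesis $\det(A_H+2+\lambda)\neq 0$ to solve equation \eqref{02eqn:basic equation (12)} explicitly for each $\xi_x$ in terms of $\xi_o$, and then substitute the result back into the remaining three equations. Writing $M=A_H+2+\lambda$, equation \eqref{02eqn:basic equation (12)} reads $\sum_{y\in V_2}M_{xy}\,\xi_y=\lambda\,\xi_o$ for every $x\in V_2$. Regarding $(\xi_y)_{y\in V_2}$ as an element of $C(V_2)$ with $C(V_1)$-valued components and inverting $M$ on the $V_2$-factor, one obtains
\[
\xi_x=\lambda\,\langle e_x,M^{-1}\mathbf{1}\rangle\,\xi_o,\qquad x\in V_2,
\]
which is exactly \eqref{03eqn:main equation (32)}.

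The key auxiliary identity is obtained by summing this over $x\in V_2$ and invoking the definition \eqref{03eqn:def of omega(lambda) for H}:
\[
\sum_{x\in V_2}\xi_x
=\lambda\,\langle\mathbf{1},M^{-1}\mathbf{1}\rangle\,\xi_o
=\bigl(\omega_H(\lambda)-1\bigr)\xi_o.
\]
Plugging this into \eqref{02eqn:basic equation (11)} collapses the left-hand side into $\omega_H(\lambda)D_G\xi_o-(J+\lambda)\xi_o$, which is \eqref{03eqn:main equation (31)}. Plugging it into \eqref{02eqn:basic equation (13)} yields $\omega_H(\lambda)\langle\mathbf{1},\xi_o\rangle=0$, which upon multiplying by $\mathbf{1}\in C(V_1)$ becomes the matrix equation \eqref{03eqn:main equation (33)}. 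Equation \eqref{02eqn:basic equation (14)} is already \eqref{03eqn:main equation (34)}.

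For the converse direction I will reverse each step: applying $M$ to both sides of \eqref{03eqn:main equation (32)} and using $MM^{-1}\mathbf{1}=\mathbf{1}$ recovers \eqref{02eqn:basic equation (12)}; the identity $\sum_{x\in V_2}\xi_x=(\omega_H(\lambda)-1)\xi_o$ derived again from \eqref{03eqn:main equation (32)} turns \eqref{03eqn:main equation (31)} back into \eqref{02eqn:basic equation (11)}; the same identity combined with \eqref{03eqn:main equation (33)} produces \eqref{02eqn:basic equation (13)}; and \eqref{03eqn:main equation (34)} is \eqref{02eqn:basic equation (14)} verbatim. This establishes the equivalence of the two systems whenever $\det(A_H+2+\lambda)\neq 0$.

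The final assertion about $\Gamma_1\cup\Gamma_2$ is then immediate: by definitions \eqref{03eqn:def of Gamma1} and \eqref{03eqn:def of Gamma2}, this union is precisely $\lambda(\mathcal{S})\cap\{\det(A_H+2+\lambda)\neq 0\}$, and by the equivalence just proved, such $\lambda$ arise exactly as those appearing in a tuple $(\xi_o,\{\xi_x\,;\,x\in V_2\},\lambda,\mu)$ solving \eqref{03eqn:main equation (31)}--\eqref{03eqn:main equation (34)}. There is no serious obstacle in this lemma; the only substantive observation is the recognition that $1+\sum_{x\in V_2}\lambda\langle e_x,M^{-1}\mathbf{1}\rangle=\omega_H(\lambda)$, which is just the definition of the $\omega$-function and which is precisely what makes the coefficients in \eqref{03eqn:main equation (31)} and \eqref{03eqn:main equation (33)} collapse cleanly.
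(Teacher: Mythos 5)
Your proof is correct and follows essentially the same route as the paper: invert $A_H+2+\lambda$ in \eqref{02eqn:basic equation (12)} to get \eqref{03eqn:main equation (32)}, then use $1+\lambda\langle\mathbf{1},(A_H+2+\lambda)^{-1}\mathbf{1}\rangle=\omega_H(\lambda)$ to collapse \eqref{02eqn:basic equation (11)} and \eqref{02eqn:basic equation (13)} into \eqref{03eqn:main equation (31)} and \eqref{03eqn:main equation (33)}. Your explicit reversal of each step for the converse is slightly more careful than the paper, which simply asserts the equivalence, but the substance is identical.
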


\begin{proof}
Since the matrix $A_H+2+\lambda$ is invertible by assumption,
from \eqref{02eqn:basic equation (12)} we obtain
\begin{align*}
\xi_x
&=\lambda \sum_{y\in V_2} 
 \langle e_x, (A_H+2+\lambda)^{-1}e_y\rangle\xi_o
\\
&=\lambda \langle e_x, (A_H+2+\lambda)^{-1}\mathbf{1}\rangle\xi_o\,,
\qquad x\in V_2,
\end{align*}
which shows \eqref{03eqn:main equation (32)}.
Then applying \eqref{03eqn:main equation (32)} to
\eqref{02eqn:basic equation (11)}, we obtain
\begin{align*}
&(D_G-J_G-\lambda)\xi_o 
 +\sum_{x\in V_2}
 \lambda \langle e_x, (A_H+2+\lambda)^{-1}\mathbf{1}\rangle D_G\xi_o
\\
&\qquad =(D_G-J_G-\lambda)\xi_o 
 +\lambda \langle\mathbf{1},
 (A_H+2+\lambda)^{-1}\mathbf{1}\rangle D_G\xi_o
=\frac{\mu}{2}\,\mathbf{1},
\end{align*}
which is equivalent to \eqref{03eqn:main equation (31)}.
Again by using \eqref{03eqn:main equation (32)},
we see that \eqref{03eqn:main equation (3)} is brought into
\begin{align*}
-J_G\xi_o
=\sum_{x\in V_2} J_G\xi_x
&=\sum_{x\in V_2}  \lambda 
 \langle e_x, (A_H+2+\lambda)^{-1}\mathbf{1}\rangle J_G\xi_o
\\
&=\lambda 
  \langle \mathbf{1}, (A_H+2+\lambda)^{-1}\mathbf{1}\rangle J_G\xi_o,
\end{align*}
namely,
\[
(1+\lambda \langle \mathbf{1}, (A_H+2+\lambda)^{-1}\mathbf{1}\rangle)
J_G\xi_o=0,
\]
which is equivalent to \eqref{03eqn:main equation (33)}.
Thus, equations
\eqref{02eqn:basic equation (11)}--\eqref{02eqn:basic equation (14)}
are equivalently brought into 
\eqref{03eqn:main equation (31)}--\eqref{03eqn:main equation (34)}.
\end{proof}

\begin{lemma}\label{03lem:description Gamma2}
Let $\lambda(\mathcal{S}_G)$ be the set of
$\lambda^\prime\in\mathbb{R}$ appearing as a 
solution $(\xi^\prime,\lambda^\prime,\mu^\prime)
\in C(V_1)\times \mathbb{R}\times \mathbb{R}$
to the following equations:
\begin{equation}\label{03eqn:for S(G)}
(D_G-\lambda^\prime I)\xi^\prime=\frac{\mu^\prime}{2}\mathbf{1},
\qquad
\langle \mathbf{1},\xi^\prime\rangle=0,
\qquad
\langle \xi^\prime,\xi^\prime\rangle=1.
\end{equation}
It then holds that
\begin{equation}\label{03eqn:description Gamma2}
\Gamma_1
=\{\det(A_H+2+\lambda)\neq0\}
 \cap\{\lambda\in\mathbb{R}\,;\,
  \psi_H(\lambda)\in \lambda(\mathcal{S}_G)\}.
\end{equation}
\end{lemma}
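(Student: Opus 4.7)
The plan is to start from Lemma \ref{03lem: basic equations under det neq 0}, which reduces the task to characterizing those $\lambda$ for which equations \eqref{03eqn:main equation (31)}--\eqref{03eqn:main equation (34)} admit a solution under the additional assumption $\omega_H(\lambda)\neq0$. The first observation is that \eqref{03eqn:main equation (33)} together with $\omega_H(\lambda)\neq0$ forces $J_G\xi_o=0$, i.e., $\langle\mathbf{1},\xi_o\rangle=0$. Substituting this into \eqref{03eqn:main equation (31)} eliminates the $J_G\xi_o$ term, and dividing by $\omega_H(\lambda)$ converts the equation into
\[
(D_G-\psi_H(\lambda) I)\xi_o
=\frac{\mu}{2\,\omega_H(\lambda)}\,\mathbf{1},
\]
which is exactly the eigen-type equation for the distance matrix $D_G$ with spectral parameter $\psi_H(\lambda)$.

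Next I would show that $\xi_o$ cannot vanish. Indeed, if $\xi_o=0$ then \eqref{03eqn:main equation (32)} gives $\xi_x=0$ for all $x\in V_2$, which contradicts the normalization \eqref{03eqn:main equation (34)}. Therefore I may set $\xi'=\xi_o/\|\xi_o\|$, and using the orthogonality $\langle\mathbf{1},\xi_o\rangle=0$ along with the equation above, one checks that the triple $(\xi',\psi_H(\lambda),\mu')$ with $\mu'=\mu/(\omega_H(\lambda)\|\xi_o\|)$ satisfies the three equations \eqref{03eqn:for S(G)}. This proves the inclusion ``$\subset$''.

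For the reverse inclusion, suppose $\lambda\in\mathbb{R}$ satisfies $\det(A_H+2+\lambda)\neq0$, $\omega_H(\lambda)\neq0$, and $\psi_H(\lambda)\in\lambda(\mathcal{S}_G)$. Pick a triple $(\xi',\psi_H(\lambda),\mu')$ solving \eqref{03eqn:for S(G)}. The plan is to define $\xi_o=c\,\xi'$ for a positive scalar $c$, define $\xi_x$ by \eqref{03eqn:main equation (32)} applied to $\xi_o$, and choose $c$ so that \eqref{03eqn:main equation (34)} holds. Using \eqref{03eqn:main equation (32)} one computes
\[
\|\xi_o\|^2+\sum_{x\in V_2}\|\xi_x\|^2
=c^2\bigl(1+\lambda^2\|(A_H+2+\lambda)^{-1}\mathbf{1}\|^2\bigr),
\]
and the parenthesized factor is strictly positive, so $c$ is well-defined. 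Equations \eqref{03eqn:main equation (32)} and \eqref{03eqn:main equation (33)} are immediate, and \eqref{03eqn:main equation (31)} follows by reversing the manipulation from the first paragraph, with $\mu=c\,\omega_H(\lambda)\mu'$. Combining both directions gives \eqref{03eqn:description Gamma2}.

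The argument is essentially a bookkeeping reduction, and I do not anticipate a serious technical obstacle; the only delicate points are confirming $\xi_o\neq0$ from the normalization, correctly carrying the scalar factor $c$ through the equations, and making sure that the positivity of $1+\lambda^2\|(A_H+2+\lambda)^{-1}\mathbf{1}\|^2$ (which was already used in differentiating $\psi_A$ in Proposition \ref{03prop:behaviour of psi(lambda)}) guarantees that $c$ can indeed be chosen.
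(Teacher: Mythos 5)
Your proposal is correct and follows essentially the same route as the paper: reduce via Lemma \ref{03lem: basic equations under det neq 0}, use $\omega_H(\lambda)\neq0$ to turn \eqref{03eqn:main equation (33)} into $J_G\xi_o=0$ and \eqref{03eqn:main equation (31)} into $(D_G-\psi_H(\lambda))\xi_o=\tfrac{\mu}{2\omega_H(\lambda)}\mathbf{1}$, and rescale $\xi_o$ using the identity $\bigl(1+\lambda^2\|(A_H+2+\lambda)^{-1}\mathbf{1}\|^2\bigr)\langle\xi_o,\xi_o\rangle=1$ to pass between the two normalizations. The only differences are cosmetic (you normalize by $\|\xi_o\|$ after noting $\xi_o\neq0$, where the paper uses the explicit constant, and you track the scalar in $\mu^\prime$ slightly more carefully).
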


\begin{proof}
In view of the definition \eqref{03eqn:def of Gamma1},
for \eqref{03eqn:description Gamma2} it is sufficient to show that
\[
\lambda\in \lambda(\mathcal{S})
\quad\Leftrightarrow\quad
\psi_H(\lambda)
=\frac{\lambda}{\omega_H(\lambda)}\in \lambda(\mathcal{S}_G)
\]
under the conditions
$\det(A_H+2+\lambda)\neq0$ and $\omega_H(\lambda)\neq0$.

($\Rightarrow$) Assume first that 
$\det(A_H+2+\lambda)\neq0$, $\omega_H(\lambda)\neq0$
and $\lambda\in \lambda(\mathcal{S})$,
and take a solution $(\xi_o,\{\xi_x\,;\,\xi\in V_1\},\lambda,\mu)$
to equations
\eqref{03eqn:main equation (31)}--\eqref{03eqn:main equation (34)}.
Since $\omega_H(\lambda)\neq 0$,
\eqref{03eqn:main equation (31)} becomes
\begin{equation}\label{03eqn:main equation (51)}
D_G\xi_o-\frac{\lambda}{\omega_H(\lambda)} \xi_o
=\frac{1}{\omega_H(\lambda)}\,\frac{\mu}{2}\mathbf{1},
\end{equation}
and \eqref{03eqn:main equation (33)} becomes
\begin{equation}\label{03eqn:basic equation (53)}
\langle \mathbf{1},\xi_o \rangle=0.
\end{equation}
Moreover, applying \eqref{03eqn:main equation (32)}
to \eqref{03eqn:main equation (34)}, we obtain
\begin{equation}\label{03eqn:basic equation (54)(1)}
\big(1+\lambda^2\|(A_H+2+\lambda)^{-1}\mathbf{1}\|^2\big)
\langle \xi_o, \xi_o\rangle=1.
\end{equation}
We define $(\xi^\prime,\lambda^\prime,\mu^\prime)\in
C(V_1)\times \mathbb{R}\times\mathbb{R}$ by
\begin{equation}\label{03eqn:in proof Lemma 3.5 (1)}
\xi^\prime
=\big(1+\lambda^2\|(A_H+2+\lambda)^{-1}\mathbf{1}\|^2\big)^{1/2}
\xi_o\,,
\quad
\lambda^\prime=\frac{\lambda}{\omega_H(\lambda)},
\quad
\mu^\prime=\frac{\mu}{\omega_H(\lambda)}.
\end{equation}
Then $(\xi^\prime,\lambda^\prime,\mu^\prime)$ becomes 
a solution to \eqref{03eqn:for S(G)}
and hence $\lambda^\prime=\psi_H(\lambda)\in \lambda(\mathcal{S}_G)$.

($\Leftarrow$) Assume that 
$\det(A_H+2+\lambda)\neq0$, $\omega_H(\lambda)\neq0$ 
and $\psi_H(\lambda)\in \lambda(\mathcal{S}_G)$.
We set $\lambda^\prime=\psi_H(\lambda)$ and
take a solution $(\xi^\prime,\lambda^\prime,\mu^\prime)$ to
equations \eqref{03eqn:for S(G)}.
Then, defining $\xi_o$ and $\mu$ by 
\eqref{03eqn:in proof Lemma 3.5 (1)}
and $\xi_x$ by \eqref{03eqn:main equation (32)},
we see that $(\xi_o,\{\xi_x\,;\, x\in V_2\}, \lambda,\mu)$ becomes a
solution to equations 
\eqref{03eqn:main equation (31)}--\eqref{03eqn:main equation (34)},
and hence $\lambda\in \lambda(\mathcal{S})$.
\end{proof}

\begin{lemma}\label{04lem:description Gamma1}
If $-2-\psi_{H*}^{-1}(\mathrm{QEC}(G)) \not\in\mathrm{ev}(A_H)$,
then $\Gamma_1\neq\emptyset$ and
\begin{equation}\label{03eqn:description max Gamma2}
\max \Gamma_1=\psi_{H*}^{-1}(\mathrm{QEC}(G)).
\end{equation}
\end{lemma}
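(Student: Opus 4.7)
The plan is to combine the description of $\Gamma_1$ from Lemma \ref{03lem:description Gamma2} with the characterization of $\psi_{H*}^{-1}$ from Proposition \ref{04prop:inverse of psi_H}. Set $\lambda_0=\psi_{H*}^{-1}(\mathrm{QEC}(G))$. Since $\lambda_0$ lies in the rightmost branch $(\lambda_*,+\infty)$, where $\psi_{H*}$ is strictly increasing and real analytic, we have $\lambda_0>\lambda_*$, and in particular $\omega_H(\lambda_0)\neq 0$. By construction $\psi_H(\lambda_0)=\mathrm{QEC}(G)$, and $\mathrm{QEC}(G)=\max\lambda(\mathcal{S}_G)\in\lambda(\mathcal{S}_G)$. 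The hypothesis $-2-\lambda_0\notin\mathrm{ev}(A_H)$ is exactly $\det(A_H+2+\lambda_0)\neq0$. Hence by \eqref{03eqn:description Gamma2}, $\lambda_0\in\Gamma_1$, proving $\Gamma_1\neq\emptyset$ and $\max\Gamma_1\geq\lambda_0$.

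For the reverse inequality, take any $\lambda\in\Gamma_1$. Lemma \ref{03lem:description Gamma2} gives $\psi_H(\lambda)\in\lambda(\mathcal{S}_G)$, so $\psi_H(\lambda)\leq\mathrm{QEC}(G)$. I would then split into two cases depending on which branch of $\psi_H$ contains $\lambda$. If $\lambda>\lambda_*$, then $\lambda$ lies in the domain of the strictly increasing function $\psi_{H*}$, and applying $\psi_{H*}^{-1}$ to the inequality yields $\lambda\leq\psi_{H*}^{-1}(\mathrm{QEC}(G))=\lambda_0$. If instead $\lambda\leq\lambda_*$, then the strict inequality $\lambda_0>\lambda_*$ noted above gives $\lambda\leq\lambda_*<\lambda_0$. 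In either case $\lambda\leq\lambda_0$, so $\max\Gamma_1\leq\lambda_0$, completing the proof.

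The only subtlety, and the step I would be most careful about, is the case analysis in the upper bound: one must resist the temptation to apply $\psi_{H*}^{-1}$ directly to $\psi_H(\lambda)\leq\mathrm{QEC}(G)$, because $\psi_H$ is only piecewise monotone and the inequality need not respect the global ordering when $\lambda$ lies on a non-rightmost branch. The fix is that all such problematic $\lambda$ automatically satisfy $\lambda\leq\lambda_*<\lambda_0$, so they cannot threaten the maximum. Everything else is a direct translation through the already established equivalences, with the hypothesis $-2-\lambda_0\notin\mathrm{ev}(A_H)$ entering only to place $\lambda_0$ inside the first intersection defining $\Gamma_1$.
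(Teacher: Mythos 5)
Your proof is correct and follows essentially the same route as the paper: both directions rest on Lemma \ref{03lem:description Gamma2}, the fact that $\mathrm{QEC}(G)=\max\lambda(\mathcal{S}_G)$, and the monotonicity properties of $\psi_{H*}$. Your branch-by-branch case analysis for the upper bound is just an unpacked version of what the paper obtains by invoking Proposition \ref{04prop:inverse of psi_H}, namely $\psi_{H*}^{-1}(\lambda^\prime)=\max\{\lambda\,;\,\psi_H(\lambda)=\lambda^\prime\}$ together with the monotonicity of $\psi_{H*}^{-1}$.
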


\begin{proof}
It is known by the general theory that 
\begin{equation}\label{04eqn:in proof Prop 4.5 (0)}
\mathrm{QEC}(G)=\max\lambda(\mathcal{S}_G).
\end{equation}
In particular, $\mathrm{QEC}(G)\in \lambda(\mathcal{S}_G)$
and hence
\[
\psi_{H*}^{-1}(\mathrm{QEC}(G))\in
\{\lambda\in\mathbb{R}\,;\,
  \psi_H(\lambda)\in \lambda(\mathcal{S}_G)\}.
\]
Then, together with the assumption that
$-2-\psi_{H*}^{-1}(\mathrm{QEC}(G)) \not\in\mathrm{ev}(A_H)$,
we see by Lemma \ref{03lem:description Gamma2}
that $\psi_{H*}^{-1}(\mathrm{QEC}(G))\in \Gamma_1$.
Therefore, $\Gamma_1\neq\emptyset$ and
\begin{equation}\label{03eqn:in proof Prop3.6 (01)}
\max\Gamma_1
\ge\psi_{H*}^{-1}(\mathrm{QEC}(G)).
\end{equation}
On the other hand,
it follows from Lemma \ref{03lem:description Gamma2}
that
\begin{align}
\max\Gamma_1
&\le \max\{\lambda\in\mathbb{R}\,;\,
  \psi_H(\lambda)\in \lambda(\mathcal{S}_G)\}
\nonumber \\
&=\max \{\psi_{H*}^{-1}(\lambda^\prime)\,;\,
\lambda^\prime\in \lambda(\mathcal{S}_G)\},
\label{04eqn:in proof Prop 4.5 (1)}
\end{align}
where the last equality is due to Proposition
\ref{04prop:inverse of psi_H}.
Since $\psi_{H*}^{-1}(\lambda^\prime)$ is an increasing function,
in view of \eqref{04eqn:in proof Prop 4.5 (0)} we obtain
\begin{equation}\label{04eqn:in proof Prop 4.5 (2)}
\max \{\psi_{H*}^{-1}(\lambda^\prime)\,;\,
\lambda^\prime\in \lambda(\mathcal{S}_G)\}
=\psi_{H*}^{-1}(\mathrm{QEC}(G)).
\end{equation}
Combining \eqref{04eqn:in proof Prop 4.5 (1)}
and \eqref{04eqn:in proof Prop 4.5 (2)},
we come to
\begin{equation}\label{03eqn:in proof Prop3.6 (1)}
\max\Gamma_1
\le\psi_{H*}^{-1}(\mathrm{QEC}(G)).
\end{equation}
Thus, \eqref{03eqn:description max Gamma2} follows from
\eqref{03eqn:in proof Prop3.6 (01)} and
\eqref{03eqn:in proof Prop3.6 (1)}.
\end{proof}

\begin{lemma}\label{04lem:description Gamma1 and Gamma2}
If $-2-\psi_{H*}^{-1}(\mathrm{QEC}(G)) \not\in\mathrm{ev}(A_H)$,
then we have
\begin{equation}\label{04eqn:description max Gamma and Gamm2}
\max \Gamma_1\cup\Gamma_2=\psi_{H*}^{-1}(\mathrm{QEC}(G)).
\end{equation}
Moreover,
\begin{equation}\label{03eqn:estimate max Gamma2}
\lambda_*<\psi_{H*}^{-1}(-1) \le\max \Gamma_1\cup\Gamma_2,
\end{equation}
where $\lambda_*$ is the largest zero of $\omega_H(\lambda)$.
\end{lemma}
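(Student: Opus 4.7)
The plan is to leverage Lemma \ref{04lem:description Gamma1} (which already identifies $\max\Gamma_1$) and show that $\Gamma_2$ contributes nothing new. Specifically, under the hypothesis, Lemma \ref{04lem:description Gamma1} gives $\max\Gamma_1=\psi_{H*}^{-1}(\mathrm{QEC}(G))$, so it suffices to verify $\max\Gamma_2\le\psi_{H*}^{-1}(\mathrm{QEC}(G))$ (with $\max\emptyset$ interpreted as $-\infty$).

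First I would note that by the very definition \eqref{03eqn:def of Gamma2}, any $\lambda\in\Gamma_2$ satisfies $\omega_H(\lambda)=0$, so $\Gamma_2\subseteq\{\lambda_1,\dots,\lambda_{k_H}\}$ and hence $\max\Gamma_2\le\lambda_1=\lambda_*$. Next I would invoke Proposition \ref{04prop:inverse of psi_H} (equivalently, Proposition \ref{03prop:behaviour of psi inverse}) which asserts that $\psi_{H*}^{-1}$ maps $\mathbb{R}$ strictly into $(\lambda_*,+\infty)$. In particular, $\psi_{H*}^{-1}(\mathrm{QEC}(G))>\lambda_*\ge\max\Gamma_2$. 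Combining this with Lemma \ref{04lem:description Gamma1} yields the first assertion \eqref{04eqn:description max Gamma and Gamm2}.

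For the second assertion \eqref{03eqn:estimate max Gamma2}, I would prove it in two steps. The strict inequality $\lambda_*<\psi_{H*}^{-1}(-1)$ is immediate from Proposition \ref{04prop:inverse of psi_H} applied to the value $\lambda^\prime=-1$. The remaining inequality $\psi_{H*}^{-1}(-1)\le\max\Gamma_1\cup\Gamma_2=\psi_{H*}^{-1}(\mathrm{QEC}(G))$ reduces, by the strict monotonicity of $\psi_{H*}^{-1}$, to the claim $\mathrm{QEC}(G)\ge -1$. This is the standard lower bound holding for any connected graph $G$ with $|V_1|\ge 2$: since $G$ is connected it contains an edge $\{x,y\}$, and testing the quadratic form $\langle f,D_Gf\rangle$ on $f=(e_x-e_y)/\sqrt{2}$ gives $\langle f,D_Gf\rangle=-d_G(x,y)=-1$, so $\mathrm{QEC}(G)\ge -1$. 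Alternatively, this follows from the isometric embedding of $K_2$ in $G$ together with $\mathrm{QEC}(K_2)=-1$ and the general monotonicity cited in the proof of Lemma \ref{01lem:lower bound}.

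The argument is essentially routine once the ingredients of Section 3 and Lemma \ref{04lem:description Gamma1} are in hand; the main point to pin down carefully is the bookkeeping that $\Gamma_2$ lies in the discrete set of zeroes of $\omega_H$ and that $\psi_{H*}^{-1}$ strictly dominates $\lambda_*$. I do not anticipate any genuine obstacle here, only the need to invoke the fact $\mathrm{QEC}(G)\ge -1$, which is part of the folklore of the subject.
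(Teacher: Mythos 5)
Your proposal is correct and follows essentially the same route as the paper's proof: bound $\max\Gamma_2$ by $\lambda_*$ via the definition of $\Gamma_2$, observe that $\psi_{H*}^{-1}$ takes values strictly above $\lambda_*$, and combine Lemma \ref{04lem:description Gamma1} with the monotonicity of $\psi_{H*}^{-1}$ and the bound $\mathrm{QEC}(G)\ge-1$. The only difference is that you supply an explicit verification of $\mathrm{QEC}(G)\ge-1$, which the paper simply cites as known.
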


\begin{proof}
We first note that $\Gamma_2=\emptyset$ or $\max\Gamma_2\le\lambda_*$.
In fact, if $\Gamma_2 \neq\emptyset$,
by the definition of $\Gamma_2$ we have
\begin{equation}\label{04eqn:in proof Lem 4.6(1)}
\max\Gamma_2
\le\max\{\lambda\in\mathbb{R}\,;\,\omega_H(\lambda)=0\}=\lambda_*.
\end{equation}
On the other hand, 
recall that $\psi_{H*}^{-1}(\lambda^\prime)$ is
an increasing function on $\mathbb{R}$ taking
values in $(\lambda_*,+\infty)$.
Since $\mathrm{QEC}(G)\ge-1$ for any 
connected graph $G$ on two or more vertices,
with the help of Lemma \ref{04lem:description Gamma1} we have
\begin{equation}\label{04eqn:in proof Lem 4.6(2)}
\lambda_*<\psi_{H*}^{-1}(-1) 
\le \psi_{H*}^{-1}(\mathrm{QEC}(G))
=\max\Gamma_1\,.
\end{equation}
Thus, we see from
\eqref{04eqn:in proof Lem 4.6(1)} and
\eqref{04eqn:in proof Lem 4.6(2)} 
that $\max\Gamma_1\cup\Gamma_2=\max\Gamma_1$
and \eqref{04eqn:description max Gamma and Gamm2} follows.
Finally, \eqref{03eqn:estimate max Gamma2} is already clear
from \eqref{04eqn:in proof Lem 4.6(2)}.
\end{proof}

We are in a good position to 
present a provisional formula for $\mathrm{QEC}(G\odot H)$
incorporating Lemma \ref{04lem:description Gamma1 and Gamma2}.

\begin{proposition}\label{04prop:preliminary formula with gamma3}
Let $G=(V_1,E_1)$ be a connected graph with $|V_1|\ge2$
and $H=(V_2,E_2)$ an arbitrary graph with $|V_2|\ge1$.
Let $A_H$ be the adjacency matrix of $H$,
and $\omega_H(\lambda)$ and $\psi_{H}(\lambda)$
the associated functions.
If 
\begin{equation}\label{03eqn:condition by not in ev(AH)}
-2-\psi_{H*}^{-1}(\mathrm{QEC}(G)) \not\in\mathrm{ev}(A_H),
\end{equation}
then we have
\begin{equation}\label{04eqn:preliminary formula for corona graph (0)}
\mathrm{QEC}(G\odot H)
=\max \{\psi_{H*}^{-1}(\mathrm{QEC}(G)),\,\,\gamma_3\},
\end{equation}
where 
\begin{equation}\label{04eqn:def of gamma3}
\gamma_3=\max \Gamma_3
=\max\lambda(\mathcal{S})\cap\{\det(A_H+2+\lambda)=0\},
\end{equation}
understanding tacitly that
$\gamma_3=-\infty$ if $\Gamma_3=\emptyset$.
\end{proposition}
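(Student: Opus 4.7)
The plan is to assemble this proposition directly from the decomposition \eqref{04eqn:our goal divided} together with Lemma \ref{04lem:description Gamma1 and Gamma2}. Recall that Proposition \ref{02prop:starting expression for QEC} gives $\mathrm{QEC}(G\odot H)=\max\lambda(\mathcal{S})$, while the definitions \eqref{03eqn:def of Gamma1}--\eqref{03eqn:def of Gamma3} partition $\lambda(\mathcal{S})$ into three disjoint pieces $\Gamma_1,\Gamma_2,\Gamma_3$ according to whether $\det(A_H+2+\lambda)$ or $\omega_H(\lambda)$ vanishes. Consequently,
\[
\mathrm{QEC}(G\odot H)
=\max\Gamma_1\cup\Gamma_2\cup\Gamma_3
=\max\{\max\Gamma_1\cup\Gamma_2,\,\gamma_3\},
\]
with the convention that the maximum over an empty set is $-\infty$.

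First I would apply Lemma \ref{04lem:description Gamma1 and Gamma2}, which is directly available under the hypothesis $-2-\psi_{H*}^{-1}(\mathrm{QEC}(G))\not\in\mathrm{ev}(A_H)$, to replace $\max\Gamma_1\cup\Gamma_2$ by $\psi_{H*}^{-1}(\mathrm{QEC}(G))$. That lemma also ensures $\Gamma_1\neq\emptyset$, so this maximum is a genuine real number rather than $-\infty$. Substituting into the display above yields
\[
\mathrm{QEC}(G\odot H)
=\max\{\psi_{H*}^{-1}(\mathrm{QEC}(G)),\,\gamma_3\},
\]
which is precisely the claimed formula \eqref{04eqn:preliminary formula for corona graph (0)}.

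There is essentially no obstacle: the argument is a one-line assembly once Lemma \ref{04lem:description Gamma1 and Gamma2} is in hand. The only things worth checking carefully are the disjointness and exhaustiveness of the partition $\lambda(\mathcal{S})=\Gamma_1\cup\Gamma_2\cup\Gamma_3$, both immediate from the definitions, and the consistent handling of the empty case via the convention $\gamma_3=-\infty$, which ensures $\max\{\psi_{H*}^{-1}(\mathrm{QEC}(G)),\gamma_3\}=\psi_{H*}^{-1}(\mathrm{QEC}(G))$ exactly when $\Gamma_3=\emptyset$. Thus the whole proof reduces to citing \eqref{04eqn:our goal divided} and Lemma \ref{04lem:description Gamma1 and Gamma2} and noting that $\gamma_3$ is, by its very definition, the remaining contribution.
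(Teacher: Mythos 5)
Your proposal is correct and follows exactly the paper's own argument: the paper likewise states that the result is immediate from the decomposition \eqref{04eqn:our goal divided} together with Lemma \ref{04lem:description Gamma1 and Gamma2}. Your additional remarks on the disjointness of the partition and the $\gamma_3=-\infty$ convention are sound but only make explicit what the paper leaves tacit.
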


The proof is obvious by
\eqref{04eqn:our goal divided} and
Lemma \ref{04lem:description Gamma1 and Gamma2}.
For estimate of $\gamma_3$ we note the obvious inequality:
\begin{equation}\label{04eqn:max Gamma3}
\gamma_3
\le \max\{-\alpha-2\,;\,\alpha\in\mathrm{ev}(A_H)\}
=-\min \mathrm{ev}(A_H)-2.
\end{equation}
%

%%%%%%%%%%%%%%%%%%%%%%%%%%%%%%%%%%%%%%%%%%%%
\subsection{Main formula for $\mathrm{QEC}(G\odot H)$}
\label{04subsec:Formula for QEC of corona graph}

Upon applying Proposition
\ref{04prop:preliminary formula with gamma3},
we need to examine $\gamma_3=\max\Gamma_3$.
But, in some special cases it may be avoided
and we may claim that
\begin{equation}\label{04eqn:preliminary formula for corona graph (00)}
\mathrm{QEC}(G\odot H)
=\psi_{H*}^{-1}(\mathrm{QEC}(G)).
\end{equation}
During this subsection, for the corona graph $G\odot H$
we always assume that 
$G=(V_1,E_1)$ is a connected graph with $|V_1|\ge2$
and $H=(V_2,E_2)$ an arbitrary graph with $|V_2|\ge1$.
The adjacency matrix of $H$ is denoted by $A_H$,
and $\omega_H(\lambda)$ and $\psi_{H}(\lambda)$ are
the associated functions, as usual.

\begin{proposition}\label{04prop:preliminary formula for corona graph}
Formula \eqref{04eqn:preliminary formula for corona graph (00)} holds if
the following two conditions are satisfied:
\begin{enumerate}
\item[\upshape (i)] $-2-\psi_{H*}^{-1}(\mathrm{QEC}(G)) \not\in\mathrm{ev}(A_H)$;
\item[\upshape (ii)] there exists $q_0\in\mathbb{R}$ such that
$\mathrm{QEC}(G\odot H)\ge q_0$ and
$\Gamma_3\cap[q_0, +\infty)=\emptyset$.
\end{enumerate}
\end{proposition}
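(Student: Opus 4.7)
The plan is to derive the formula directly from the preceding Proposition \ref{04prop:preliminary formula with gamma3}, using condition (ii) to eliminate $\gamma_3$ as a possible maximizer. The whole argument should be short, because most of the spectral work has already been done; what remains is a careful case analysis at the level of real numbers.

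First, I will invoke condition (i) to apply Proposition \ref{04prop:preliminary formula with gamma3}, which gives
\[
\mathrm{QEC}(G\odot H)=\max\{\psi_{H*}^{-1}(\mathrm{QEC}(G)),\,\gamma_3\}.
\]
Thus the target identity \eqref{04eqn:preliminary formula for corona graph (00)} is equivalent to the inequality $\gamma_3\le\psi_{H*}^{-1}(\mathrm{QEC}(G))$, so everything reduces to showing that the $\gamma_3$-branch does not dominate.

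Next, I will argue by contradiction: suppose $\gamma_3>\psi_{H*}^{-1}(\mathrm{QEC}(G))$, so that $\mathrm{QEC}(G\odot H)=\gamma_3$. Condition (ii) provides a real number $q_0$ with $\mathrm{QEC}(G\odot H)\ge q_0$ and $\Gamma_3\cap[q_0,+\infty)=\emptyset$. The latter forces every element of $\Gamma_3$ to be strictly less than $q_0$; combined with the convention $\gamma_3=-\infty$ when $\Gamma_3=\emptyset$, this yields $\gamma_3<q_0$ in either case. But then $\gamma_3=\mathrm{QEC}(G\odot H)\ge q_0$, a contradiction. Therefore the maximum is attained by $\psi_{H*}^{-1}(\mathrm{QEC}(G))$, and formula \eqref{04eqn:preliminary formula for corona graph (00)} follows.

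There is essentially no obstacle: once Proposition \ref{04prop:preliminary formula with gamma3} is available, the statement reduces to a one-line comparison of two real numbers. The only delicate point is handling the convention $\gamma_3=-\infty$ when $\Gamma_3=\emptyset$, which must be written out explicitly to make the inequality $\gamma_3<q_0$ unambiguous in both the empty and nonempty cases.
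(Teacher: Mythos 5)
Your proof is correct and is essentially the paper's argument: the paper discards $\Gamma_3$ by writing $\mathrm{QEC}(G\odot H)=\max\,(\Gamma_1\cup\Gamma_2\cup\Gamma_3)\cap[q_0,+\infty)$ and invoking Lemma \ref{04lem:description Gamma1 and Gamma2}, while you route the same elimination through Proposition \ref{04prop:preliminary formula with gamma3} and a short contradiction. The care you take with the convention $\gamma_3=-\infty$ when $\Gamma_3=\emptyset$ is appropriate but does not change the substance.
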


\begin{proof}
We have $\mathrm{QEC}(G\odot H)
=\max\,(\Gamma_1\cup \Gamma_2\cup\Gamma_3)\cap[q_0,+\infty)$
by \eqref{02eqn:QEC by lower bound q_0} and
\eqref{04eqn:our goal divided}.
Then \eqref{04eqn:preliminary formula for corona graph (00)} 
follows from Lemma \ref{04lem:description Gamma1 and Gamma2}.
\end{proof}

\begin{theorem}\label{04thm:min formula (01)}
Formula \eqref{04eqn:preliminary formula for corona graph (00)} holds if
the following three conditions are satisfied:
\begin{enumerate}
\item[\upshape (i)] 
$-2-\psi_{H*}^{-1}(\mathrm{QEC}(G)) \not\in\mathrm{ev}(A_H)$;
\item[\upshape (ii)]
$\min \mathrm{ev}(A_H)\geq -2$;
\item[\upshape (iii)] $\mathrm{QEC}(G)\ge0$.
\end{enumerate}
\end{theorem}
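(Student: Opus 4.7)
The plan is to apply Proposition \ref{04prop:preliminary formula with gamma3}, which under hypothesis (i) already gives
\[
\mathrm{QEC}(G\odot H)=\max\{\psi_{H*}^{-1}(\mathrm{QEC}(G)),\,\gamma_3\}.
\]
So the only thing to verify is that the right-hand term $\gamma_3$ is dominated by the left-hand term, i.e.\ $\gamma_3\le \psi_{H*}^{-1}(\mathrm{QEC}(G))$. Both hypotheses (ii) and (iii) will feed into this single inequality, with (ii) bounding $\gamma_3$ from above and (iii) bounding $\psi_{H*}^{-1}(\mathrm{QEC}(G))$ from below.

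First I would bound $\gamma_3$ using \eqref{04eqn:max Gamma3}: since $\gamma_3\le -\min\mathrm{ev}(A_H)-2$, hypothesis (ii) yields
\[
\gamma_3\le -\min\mathrm{ev}(A_H)-2\le 0.
\]
(If $\Gamma_3=\emptyset$ this is trivially satisfied with $\gamma_3=-\infty$.)

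Next I would evaluate $\psi_{H*}^{-1}$ at $0$. By Proposition \ref{04prop:inverse of psi_H}, $\psi_{H*}^{-1}(0)=-\alpha_1-2$ where $\alpha_1=\min(\mathrm{ev}_{\mathrm{M}}(A_H)\cup\{-2\})$. Hypothesis (ii) forces every eigenvalue, hence every main eigenvalue, of $A_H$ to be at least $-2$, so $\alpha_1=-2$ and therefore $\psi_{H*}^{-1}(0)=0$. Since $\psi_{H*}^{-1}$ is strictly increasing by Proposition \ref{04prop:inverse of psi_H}, hypothesis (iii) gives
\[
\psi_{H*}^{-1}(\mathrm{QEC}(G))\ge \psi_{H*}^{-1}(0)=0.
\]

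Combining the two displays gives $\gamma_3\le 0\le \psi_{H*}^{-1}(\mathrm{QEC}(G))$, so the maximum in \eqref{04eqn:preliminary formula for corona graph (0)} is attained by the first argument, yielding \eqref{04eqn:preliminary formula for corona graph (00)}. There is no real obstacle here: the argument is just careful bookkeeping, and the elegance lies in the fact that the single hypothesis $\min\mathrm{ev}(A_H)\ge -2$ plays a double role, pinning both $\gamma_3$ and $\psi_{H*}^{-1}(0)$ to the threshold value $0$ across which the sign of $\mathrm{QEC}(G)$ (hypothesis (iii)) decides the outcome.
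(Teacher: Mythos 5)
Your proof is correct and follows essentially the same route as the paper: both invoke Proposition \ref{04prop:preliminary formula with gamma3} under hypothesis (i), bound $\gamma_3\le -\min\mathrm{ev}(A_H)-2\le 0$ via \eqref{04eqn:max Gamma3} and (ii), and use $\psi_{H*}^{-1}(0)=-\alpha_1-2=0$ together with monotonicity and (iii) to get $\psi_{H*}^{-1}(\mathrm{QEC}(G))\ge0$. No differences worth noting.
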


\begin{proof}
Recall first that 
$\psi_{H*}^{-1}(0)=-\alpha_1-2$, where
$\alpha_1=\min\mathrm{ev}_M(A_H)\cup\{-2\}$,
see Proposition \ref{04prop:inverse of psi_H}.
Then, by condition (ii) we see that $\alpha_1=-2$ and
$\psi_{H*}^{-1}(0)=0$.
Since $\psi_{H*}^{-1}(\lambda^\prime)$ is an increasing function,
condition (iii) ensures that
\[
\psi_{H*}^{-1}(\mathrm{QEC}(G))\ge0.
\]
On the other hand, in view of \eqref{04eqn:max Gamma3}
and condition (ii) we have
\[
\gamma_3
\le -\min \mathrm{ev}(A_H)-2
\le 0.
\]
It then follows from Proposition 
\ref{04prop:preliminary formula with gamma3} that
\[
\mathrm{QEC}(G\odot H)
=\max\{\psi_{H*}^{-1}(\mathrm{QEC}(G)),\, \gamma_3\}
=\psi_{H*}^{-1}(\mathrm{QEC}(G)),
\]
as desired.
\end{proof}

During the above proof condition (iii) is crucial.
The following result stands in contrast to 
Theorem \ref{04thm:min formula (01)} in this aspect.

\begin{theorem}\label{04thm:min formula (02)}
Formula \eqref{04eqn:preliminary formula for corona graph (00)} holds if
the following two conditions are satisfied:
\begin{enumerate}
\item[\upshape (i)] 
$-2-\psi_{H*}^{-1}(\mathrm{QEC}(G)) \not\in\mathrm{ev}(A_H)$;
\item[\upshape (ii)]
$\min \mathrm{ev}(A_H)> -\sqrt{2}$.
\end{enumerate}
\end{theorem}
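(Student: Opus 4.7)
The plan is to apply Proposition \ref{04prop:preliminary formula for corona graph}, which reduces the task to verifying its two hypotheses. Condition (i) of that proposition is identical to condition (i) of the present theorem, so nothing needs to be done there. The substantive work is in producing a suitable constant $q_0$ witnessing condition (ii), i.e.\ an $\mathbb{R}$-value such that simultaneously $\mathrm{QEC}(G\odot H)\ge q_0$ and $\Gamma_3\cap[q_0,+\infty)=\emptyset$.

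The natural candidate is $q_0=-2+\sqrt{2}$, for two reasons. First, by Lemma \ref{01lem:lower bound} we always have $\mathrm{QEC}(G\odot H)\ge -2+\sqrt{2}$ (coming from the isometric path $P_4\cong K_2\odot K_1$), so the lower bound half of (ii) is automatic. Second, from the general estimate \eqref{04eqn:max Gamma3}
\[
\gamma_3\le -\min\mathrm{ev}(A_H)-2,
\]
together with condition (ii) of the present theorem, we obtain
\[
\gamma_3\le -\min\mathrm{ev}(A_H)-2 < \sqrt{2}-2 = -2+\sqrt{2}.
\]
Hence every element of $\Gamma_3$ is strictly less than $q_0$, so $\Gamma_3\cap[q_0,+\infty)=\emptyset$, confirming the second half of condition (ii) of Proposition \ref{04prop:preliminary formula for corona graph}.

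With both hypotheses of Proposition \ref{04prop:preliminary formula for corona graph} now in hand, formula \eqref{04eqn:preliminary formula for corona graph (00)} follows directly, completing the proof. I do not expect any serious obstacle: the argument is essentially a numerical coincidence between the universal lower bound $-2+\sqrt{2}$ arising from $P_4$ and the bound $\sqrt{2}-2$ on $\gamma_3$ produced by the hypothesis $\min\mathrm{ev}(A_H)>-\sqrt{2}$. The only subtle point to check is that the strict inequality $\min\mathrm{ev}(A_H)>-\sqrt{2}$ (rather than $\ge$) is exactly what is needed to make the bound on $\gamma_3$ strict, which in turn lets us take the closed interval $[q_0,+\infty)$ in condition (ii) of Proposition \ref{04prop:preliminary formula for corona graph}. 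Notably, in contrast to Theorem \ref{04thm:min formula (01)}, no sign assumption on $\mathrm{QEC}(G)$ is required, since here the comparison $\gamma_3<q_0\le \mathrm{QEC}(G\odot H)$ is obtained without routing through the value $\psi_{H*}^{-1}(0)=0$.
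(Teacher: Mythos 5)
Your proposal is correct and follows essentially the same route as the paper's own proof: both set $q_0=-2+\sqrt{2}$, invoke Lemma \ref{01lem:lower bound} for the lower bound on $\mathrm{QEC}(G\odot H)$, bound $\gamma_3$ via \eqref{04eqn:max Gamma3} together with hypothesis (ii), and conclude by Proposition \ref{04prop:preliminary formula for corona graph}. No gaps.
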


\begin{proof}
In view of \eqref{04eqn:max Gamma3} and condition (ii) we have
\[
\gamma_3
\le -\min \mathrm{ev}(A_H)-2
< -2+\sqrt2.
\]
On the other hand, we have
$\mathrm{QEC}(G\odot H)\ge -2+\sqrt2$ by Lemma \ref{01lem:lower bound}.
Setting $q_0=-2+\sqrt2$, we see that
$\mathrm{QEC}(G\odot H)\ge q_0$ and
$\Gamma_3\cap[q_0,+\infty)=\emptyset$.
Then the assertion follows from
Proposition \ref{04prop:preliminary formula for corona graph}.
\end{proof}

\begin{theorem}\label{04thm:min formula (03)}
Formula \eqref{04eqn:preliminary formula for corona graph (00)} holds if
\begin{equation}\label{04eqn:condition for min ev(AH)}
-2-\psi_{H*}^{-1}(\mathrm{QEC}(G))
<\min\mathrm{ev}(A_H).
\end{equation}
\end{theorem}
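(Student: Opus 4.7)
The plan is to derive the theorem essentially as a corollary of Proposition \ref{04prop:preliminary formula with gamma3} together with the crude upper bound \eqref{04eqn:max Gamma3} on $\gamma_3$, so no new computation on the set $\Gamma_3$ is needed.

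First I would verify that hypothesis (i) of Proposition \ref{04prop:preliminary formula with gamma3} is automatic under \eqref{04eqn:condition for min ev(AH)}: since $\min\mathrm{ev}(A_H)$ is the smallest value taken by elements of $\mathrm{ev}(A_H)$, any real number strictly less than $\min\mathrm{ev}(A_H)$ cannot belong to $\mathrm{ev}(A_H)$. Applied to $-2-\psi_{H*}^{-1}(\mathrm{QEC}(G))$, this yields $-2-\psi_{H*}^{-1}(\mathrm{QEC}(G))\notin\mathrm{ev}(A_H)$. Hence Proposition \ref{04prop:preliminary formula with gamma3} gives
\[
\mathrm{QEC}(G\odot H)
=\max\{\psi_{H*}^{-1}(\mathrm{QEC}(G)),\,\gamma_3\}.
\]

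Next I would rewrite the standing assumption \eqref{04eqn:condition for min ev(AH)} in the equivalent form
\[
-\min\mathrm{ev}(A_H)-2<\psi_{H*}^{-1}(\mathrm{QEC}(G))
\]
by multiplying through by $-1$ and rearranging. Combining this with the general upper bound \eqref{04eqn:max Gamma3}, namely $\gamma_3\le -\min\mathrm{ev}(A_H)-2$, yields the strict inequality $\gamma_3<\psi_{H*}^{-1}(\mathrm{QEC}(G))$. Consequently the maximum on the right-hand side is attained by $\psi_{H*}^{-1}(\mathrm{QEC}(G))$, and formula \eqref{04eqn:preliminary formula for corona graph (00)} follows.

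There is really no serious obstacle: every ingredient is already in place, and the role of \eqref{04eqn:condition for min ev(AH)} is precisely to push the unknown (and potentially troublesome) quantity $\gamma_3$ strictly below $\psi_{H*}^{-1}(\mathrm{QEC}(G))$. The only minor point one should watch is the convention $\gamma_3=-\infty$ when $\Gamma_3=\emptyset$, but in that degenerate case the inequality $\gamma_3<\psi_{H*}^{-1}(\mathrm{QEC}(G))$ holds trivially, so the same conclusion goes through.
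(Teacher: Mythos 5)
Your proposal is correct and follows essentially the same route as the paper: both observe that condition \eqref{03eqn:condition by not in ev(AH)} is automatic under \eqref{04eqn:condition for min ev(AH)}, then combine the bound \eqref{04eqn:max Gamma3} with the hypothesis to force $\gamma_3<\psi_{H*}^{-1}(\mathrm{QEC}(G))$ and conclude via Proposition \ref{04prop:preliminary formula with gamma3}. Your explicit remark about the convention $\gamma_3=-\infty$ is a small extra care the paper leaves tacit.
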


\begin{proof}
Upon applying Proposition 
\ref{04prop:preliminary formula with gamma3},
we first note that condition
\eqref{03eqn:condition by not in ev(AH)} is fulfilled
by \eqref{04eqn:condition for min ev(AH)}.
Moreover, in view of \eqref{04eqn:max Gamma3} 
and \eqref{04eqn:condition for min ev(AH)} we have
\[
\gamma_3 
\le-\min\mathrm{ev}(A_H)-2
<\psi_{H*}^{-1}(\mathrm{QEC}(G)).
\]
It then follows from Proposition 
\ref{04prop:preliminary formula with gamma3} that
\[
\mathrm{QEC}(G\odot H)
=\max \{\psi_{H*}^{-1}(\mathrm{QEC}(G)),\,\,\gamma_3\}
=\psi_{H*}^{-1}(\mathrm{QEC}(G)),
\]
as desired.
\end{proof}

\begin{remark}\normalfont
We note that a graph $H$ satisfies $\min \mathrm{ev}(A_H)>-\sqrt{2}$
if and only if it is a disjoint union of complete graphs.
In fact, let $H$ be a graph on $n\ge1$ vertices
with $\mathrm{ev}(A_H)=\{\lambda_1\ge \lambda_2\ge
\dots\ge \lambda_n\}$.
Assume that $H$ contains $P_3$ as an induced subgraph.
Then, using $\mathrm{ev}(P_3)=\{0,\pm\sqrt2\}$
and the Cauchy interlacing theorem,
we see that $\lambda_3\ge -\sqrt2 \ge \lambda_n=\min\mathrm{ev}(A_H)$.
Therefore, if a graph $H$ satisfies
$\min \mathrm{ev}(A_H)>-\sqrt{2}$,
it does not contain $P_3$ as an induced subgraph,
and hence it is a disjoint union of complete graphs.
Conversely, if $H$ is a disjoint union of complete graphs,
we have $\min\mathrm{ev}(A_H)=0$ or $=-1$.
\end{remark}

%%%%%%%%%%%%%%%%%%%%%%%%%%%%%%%%%%%%%%%%%%%%%%%%%%%%%%%%%%%%%
\subsection{Corona graphs $G\odot H$ with $H$ being regular}
\label{04subsec:Formula for QEC of corona graph with regular H}

\begin{lemma}\label{04lem:main ev for regular graphs}
Let $\kappa\ge0$.
If $H$ is a $\kappa$-regular graph,
then $\kappa$ is a unique main eigenvalue, that is,
$\mathrm{ev}_M(H)=\{\kappa\}$.
\end{lemma}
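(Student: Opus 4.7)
The plan is to use the defining property of $\kappa$-regularity, namely that the all-ones vector $\mathbf{1}$ is itself an eigenvector of $A_H$ with eigenvalue $\kappa$, together with the orthogonality of eigenspaces for a symmetric matrix. The characterization of main eigenvalues recalled in Section~\ref{03sec:Main eigenvalues} (an eigenvalue $\alpha$ is main iff $E_\alpha \mathbf{1}\neq 0$) reduces the whole question to inspecting where $\mathbf{1}$ lies in the spectral decomposition.

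First, I would observe that $H$ being $\kappa$-regular means every row sum of $A_H$ equals $\kappa$, i.e., $A_H\mathbf{1}=\kappa\mathbf{1}$. Hence $\kappa\in\mathrm{ev}(A_H)$ and $\mathbf{1}$ belongs to the eigenspace associated with $\kappa$; consequently $E_\kappa\mathbf{1}=\mathbf{1}\neq 0$, so $\kappa\in\mathrm{ev}_{\mathrm{M}}(A_H)$.

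Next, for any other eigenvalue $\alpha\in\mathrm{ev}(A_H)\setminus\{\kappa\}$, since $A_H$ is symmetric the eigenspaces for distinct eigenvalues are mutually orthogonal, so the eigenspace for $\alpha$ is contained in the orthogonal complement of $\mathbf{1}$. Equivalently, in the spectral decomposition $\sum_{\alpha'}E_{\alpha'}=I$ the projections satisfy $E_\alpha E_\kappa=0$, whence $E_\alpha\mathbf{1}=E_\alpha(E_\kappa\mathbf{1})=0$. Thus $\alpha\notin\mathrm{ev}_{\mathrm{M}}(A_H)$, and combining with the previous step gives $\mathrm{ev}_{\mathrm{M}}(A_H)=\{\kappa\}$.

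There is really no obstacle here: the argument is a one-line spectral observation. The only point to be careful about is to keep the two directions distinct and to use the symmetry of $A_H$ (so that eigenspaces are genuinely orthogonal, not merely linearly independent), which is what powers the vanishing $E_\alpha\mathbf{1}=0$ for $\alpha\neq\kappa$.
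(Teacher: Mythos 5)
Your argument is correct and is precisely the computation that the paper compresses into ``The proof is obvious by definition'': since $A_H\mathbf{1}=\kappa\mathbf{1}$ one has $E_\kappa\mathbf{1}=\mathbf{1}\neq0$, while orthogonality of the spectral projections gives $E_\alpha\mathbf{1}=E_\alpha E_\kappa\mathbf{1}=0$ for every other eigenvalue $\alpha$. There is no gap and no difference in approach.
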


The proof is obvious by definition.
Moreover, it is noteworthy that
a graph has exactly one main eigenvalue if and only if it 
is regular, see e.g., \cite{Cvetkovic1978,Cvetkovic-Rowlinson-Simic2010}.

For a $\kappa$-regular graph $H$ on $n$ vertices
with $n\ge1$ and $\kappa\ge0$,
the $\omega$- and $\psi$-functions are given by
\begin{align}
\omega_H(\lambda)
&=1+\frac{n\lambda}{\kappa+2+\lambda}
=\frac{(n+1)\lambda+\kappa+2}{\kappa+2+\lambda}\,,
\quad
\lambda_*=-\frac{\kappa+2}{n+1}\,,
\label{04eqn:lambda* for regular graphs}
\\[6pt]
\psi_H(\lambda)
&=\frac{\lambda}{\omega(\lambda)}
=\frac{\lambda(\lambda+\kappa+2)}{(n+1)\lambda+\kappa+2}\,.
\label{04eqn:psi for regular graphs}
\end{align}
And the inverse function is given explicitly as follows:
\begin{align}
\psi_{H*}^{-1}(\lambda^\prime)
&=\frac{1}{2}\Big\{(n+1)\lambda^\prime-(\kappa+2)
\nonumber\\
&\qquad\qquad +\sqrt{
 ((n+1)\lambda^\prime-(\kappa+2))^2+4(\kappa+2)\lambda^\prime}\, \Big\}.
\label{03eqn:psi_H* inverse for regular graphs}
\end{align}

\begin{theorem}\label{04thm:main formula for regular graph}
Let $G=(V_1,E_1)$ be a connected graph with $|V_1|\ge2$
and $H=(V_2,E_2)$ a $\kappa$-regular graph on $n=|V_2|$
vertices, where $n\ge1$ and $\kappa\ge0$.
Then it holds that
\begin{equation}\label{03eqn:QEC(GoH) for a regular H}
\mathrm{QEC}(G\odot H)
=\psi_{H*}^{-1}(\mathrm{QEC}(G)),
\end{equation}
where $\psi_{H*}^{-1}(\lambda^\prime)$ is defined as in
\eqref{03eqn:psi_H* inverse for regular graphs},
if the following two conditions are satisfied:
\begin{enumerate}
\item[\upshape (i)] 
$-2-\psi_{H*}^{-1}(\mathrm{QEC}(G)) \not\in\mathrm{ev}(A_H)$;
\item[\upshape (ii)]
$\displaystyle -2-\min\mathrm{ev}(A_H)\le -\frac{\kappa+2}{n+1}$.
\end{enumerate}
\end{theorem}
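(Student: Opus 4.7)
The plan is to reduce the assertion to Proposition \ref{04prop:preliminary formula with gamma3} by showing that, under the regularity assumption on $H$ together with condition (ii), the quantity $\gamma_3$ cannot exceed $\psi_{H*}^{-1}(\mathrm{QEC}(G))$; consequently the $\max$ in \eqref{04eqn:preliminary formula for corona graph (0)} collapses to the desired $\psi_{H*}^{-1}(\mathrm{QEC}(G))$ and gives \eqref{03eqn:QEC(GoH) for a regular H}.

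First I would record what $\kappa$-regularity of $H$ forces on the auxiliary functions. By Lemma \ref{04lem:main ev for regular graphs} one has $\mathrm{ev}_{\mathrm{M}}(A_H)=\{\kappa\}$; in particular $k_H=1$ and $\|E_\kappa\mathbf{1}\|^2=n$. Substituting into \eqref{03eqn:omega(lambda) (21)} produces the rational form \eqref{04eqn:lambda* for regular graphs} of $\omega_H$, from which the unique zero $\lambda_*=-(\kappa+2)/(n+1)$ is read off directly. The expression for $\psi_H$ in \eqref{04eqn:psi for regular graphs} then follows from the defining relation \eqref{03eqn:def of psi(lambda) for H}, and solving the resulting quadratic $\psi_H(\lambda)=\lambda'$ while selecting the larger root (the right branch, as characterised in Proposition \ref{04prop:inverse of psi_H}) yields the explicit closed form \eqref{03eqn:psi_H* inverse for regular graphs}. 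This step is purely computational and carries no difficulty.

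The heart of the proof is a short three-term estimate. Condition (i) coincides verbatim with the hypothesis \eqref{03eqn:condition by not in ev(AH)} of Proposition \ref{04prop:preliminary formula with gamma3}, so that proposition applies and reduces the task to verifying $\gamma_3\le\psi_{H*}^{-1}(\mathrm{QEC}(G))$. The generic bound \eqref{04eqn:max Gamma3} gives $\gamma_3\le -\min\mathrm{ev}(A_H)-2$; condition (ii) rewrites this right-hand side as at most $-(\kappa+2)/(n+1)$, which by \eqref{04eqn:lambda* for regular graphs} is exactly $\lambda_*$; and Proposition \ref{04prop:inverse of psi_H} guarantees that $\psi_{H*}^{-1}$ takes values in $(\lambda_*,+\infty)$, so in particular $\psi_{H*}^{-1}(\mathrm{QEC}(G))>\lambda_*$. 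Chaining these we obtain $\gamma_3\le\lambda_*<\psi_{H*}^{-1}(\mathrm{QEC}(G))$, and Proposition \ref{04prop:preliminary formula with gamma3} closes the argument.

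I expect no substantive obstacle. The only point requiring attention is making sure that the numerical threshold appearing in condition (ii) is exactly matched to the algebraic value of $\lambda_*$ derived from \eqref{04eqn:lambda* for regular graphs}, so that the pivot inequality $\gamma_3\le\lambda_*$ is precisely what condition (ii) supplies; this is really what makes condition (ii) the natural hypothesis for the regular case.
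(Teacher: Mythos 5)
Your argument is correct and coincides essentially verbatim with the paper's proof: both invoke Proposition \ref{04prop:preliminary formula with gamma3} via condition (i), chain $\gamma_3\le-\min\mathrm{ev}(A_H)-2\le-(\kappa+2)/(n+1)=\lambda_*<\psi_{H*}^{-1}(\mathrm{QEC}(G))$ using \eqref{04eqn:max Gamma3}, condition (ii), \eqref{04eqn:lambda* for regular graphs} and the range of $\psi_{H*}^{-1}$, and conclude that the maximum collapses. No discrepancies to report.
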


\begin{proof}
Upon applying Proposition \ref{04prop:preliminary formula with gamma3},
we see from \eqref{04eqn:max Gamma3}, 
\eqref{04eqn:lambda* for regular graphs}
and assumption (ii) that
\[
\gamma_3
\le -\min\mathrm{ev}(A_H)-2
\le -\frac{\kappa+2}{n+1}
=\lambda_*.
\]
On the other hand, 
$\lambda_*<\psi_{H*}^{-1}(\mathrm{QEC}(G))$ is obvious
by definition of $\psi_{H*}^{-1}(\lambda^\prime)$.
Then, it follows from
Proposition \ref{04prop:preliminary formula with gamma3} that
\[
\mathrm{QEC}(G\odot H)
=\max \{\psi_{H*}^{-1}(\mathrm{QEC}(G)),\,\,\gamma_3\}
=\psi_{H*}^{-1}(\mathrm{QEC}(G)),
\]
as desired.
\end{proof}

\begin{lemma}\label{04lem:QEC(K1+H) for regular H}
Let $H$ be a $\kappa$-regular graph on $n$ vertices
with $n\ge1$ and $\kappa\ge0$.
Then 
\[
\mathrm{QEC}(K_1+H)
=-2+\max\bigg\{-\min\mathrm{ev}(A_H),\, \frac{2n-\kappa}{n+1}\bigg\}.
\]
\end{lemma}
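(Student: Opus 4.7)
The plan is to reduce the QEC computation to a short variational problem that decouples because $H$ is regular. From \eqref{02eqn:in proof Lemma 2.1 (5)} the distance matrix of $\Tilde{H}=K_1+H$ on $\{o\}\cup V_2$ takes the block form
\[
D_{\Tilde{H}}=\begin{bmatrix} 0 & J \\ J & 2J-2I-A_{H} \end{bmatrix}.
\]
I would write a generic test vector as $f=\begin{bmatrix} a \\ g\end{bmatrix}$ with $a\in\mathbb{R}$, $g\in C(V_2)$, and use $\langle\mathbf{1},f\rangle=0$ to substitute $a=-\langle\mathbf{1},g\rangle$. A direct expansion of $\langle f, D_{\Tilde{H}} f\rangle$ shows that the $2J$ block contributes $2\langle\mathbf{1},g\rangle^{2}$ which exactly cancels against $2a\langle\mathbf{1},g\rangle$, leaving
\[
\langle f,D_{\Tilde{H}} f\rangle=-\langle g,(A_H+2I)g\rangle,
\]
with the normalization rewritten as $\langle\mathbf{1},g\rangle^{2}+\|g\|^{2}=1$.

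The key step exploits $\kappa$-regularity: $A_H\mathbf{1}=\kappa\mathbf{1}$, so $A_H$ preserves both $\mathrm{span}(\mathbf{1})$ and $\mathbf{1}^{\perp}$. Splitting $g=t\mathbf{1}/\sqrt{n}+g^{\perp}$ with $g^{\perp}\perp\mathbf{1}$, the constraint becomes $(n+1)t^{2}+\|g^{\perp}\|^{2}=1$ and the objective decouples into
\[
-\langle g,(A_H+2I)g\rangle
=-(\kappa+2)t^{2}-\langle g^{\perp},(A_H+2I)g^{\perp}\rangle.
\]
For fixed $\|g^{\perp}\|^{2}=c$, the minimum of $\langle g^{\perp},(A_H+2I)g^{\perp}\rangle$ over $g^{\perp}\in\mathbf{1}^{\perp}$ equals $c(\min\mathrm{ev}(A_H)+2)$, attained on the $(\min\mathrm{ev}(A_H))$-eigenspace, which for regular $H$ is orthogonal to $\mathbf{1}$ (the trivial case $H=\overline{K_n}$ remains consistent since then $\min\mathrm{ev}(A_H)=\kappa=0$).

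Substituting $t^{2}=(1-c)/(n+1)$ reduces the problem to maximizing an affine function of $c\in[0,1]$, so the extremum is attained at an endpoint. The two candidate values $-(\kappa+2)/(n+1)$ (at $c=0$) and $-(\min\mathrm{ev}(A_H)+2)$ (at $c=1$) rewrite as $-2+(2n-\kappa)/(n+1)$ and $-2-\min\mathrm{ev}(A_H)$, and their maximum is exactly the stated formula. The only delicate point is the careful bookkeeping in reducing the two original constraints ($\langle\mathbf{1},f\rangle=0$ and $\|f\|^{2}=1$) to a single one on $g$; once that simplification is in place, the regularity of $H$ ensures the quadratic form decomposes into pieces whose extrema can be read off directly from $\mathrm{ev}(A_H)$.
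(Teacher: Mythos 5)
Your argument is correct, and it takes a genuinely different route from the paper: the paper proves this lemma in one line by citing the known formula for the QEC of a join of two regular graphs from Lou--Obata--Huang and observing that $K_1+H$ is the special case where one factor is the $0$-regular graph $K_1$. You instead give a self-contained variational computation on the block form of $D_{K_1+H}$: eliminating the coordinate at $o$ via $\langle\mathbf{1},f\rangle=0$ reduces the objective to $-\langle g,(A_H+2I)g\rangle$ subject to $\langle\mathbf{1},g\rangle^2+\|g\|^2=1$, and regularity makes $A_H$ preserve $\mathrm{span}(\mathbf{1})$ and $\mathbf{1}^{\perp}$, so the problem decouples and reduces to an affine function of $c=\|g^{\perp}\|^2$ maximized at an endpoint. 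Your cancellation of the $2J$ contribution against $2a\langle\mathbf{1},g\rangle$, the identification of the restricted minimum eigenvalue on $\mathbf{1}^{\perp}$ with $\min\mathrm{ev}(A_H)$ (valid since for regular $H$ every eigenvector for an eigenvalue below $\kappa$ is orthogonal to $\mathbf{1}$, with the degenerate case $H=\bar{K}_n$ handled separately), and the endpoint values $-2+(2n-\kappa)/(n+1)$ and $-2-\min\mathrm{ev}(A_H)$ all check out; the edge case $n=1$, where $c=1$ is not attainable, is harmless because there the $c=0$ value dominates. What each approach buys: the paper's proof is shorter but opaque, resting entirely on an external result, whereas yours is elementary, independent of that reference, and makes visible that the two terms in the maximum come respectively from the $\mathbf{1}$-direction and from $\mathbf{1}^{\perp}$ --- which is exactly the dichotomy that later drives Proposition \ref{04prop:QEC(K1+H)=0 case} and Theorem \ref{04thm:QEC for H with minev=-2}.
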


\begin{proof}
The formula for QEC of the join of two regular graphs is known
\cite{Lou-Obata-Huang2022}.
The assertion is just a special case.
\end{proof}

\begin{proposition}\label{04prop:QEC(K1+H)=0 case}
Let $H$ be a $\kappa$-regular graph on $n$ vertices
with $n\ge1$ and $\kappa\ge0$.
Then $\mathrm{QEC}(K_1+H)\ge0$
(resp. $\mathrm{QEC}(K_1+H)\le0$)
if and only if $\min \mathrm{ev}(A_H)\le-2$
(resp. $\min \mathrm{ev}(A_H)\ge-2$).
\end{proposition}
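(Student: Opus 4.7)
The plan is to use Lemma \ref{04lem:QEC(K1+H) for regular H} and observe that one of the two terms inside the maximum is always strictly less than $2$, so the sign of $\mathrm{QEC}(K_1+H)$ is controlled entirely by the other term, namely $-\min\mathrm{ev}(A_H)$.

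First I would record the key estimate: since $\kappa\ge0$ and $n\ge1$, one has
\[
\frac{2n-\kappa}{n+1}\le\frac{2n}{n+1}<2.
\]
Hence, in the maximum
\[
M:=\max\Bigl\{-\min\mathrm{ev}(A_H),\,\frac{2n-\kappa}{n+1}\Bigr\},
\]
the second argument is always strictly smaller than $2$.

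From this observation the two equivalences drop out immediately.
For the ``$\ge$'' case, $M\ge2$ forces the maximum to be attained at the first argument, so $M\ge2\iff -\min\mathrm{ev}(A_H)\ge2\iff \min\mathrm{ev}(A_H)\le-2$.
For the ``$\le$'' case, $M\le2$ is equivalent to requiring both arguments to be $\le2$, but the second one is automatic, so $M\le2\iff -\min\mathrm{ev}(A_H)\le2\iff \min\mathrm{ev}(A_H)\ge-2$.
Combining with Lemma \ref{04lem:QEC(K1+H) for regular H}, which asserts $\mathrm{QEC}(K_1+H)=-2+M$, we get the two stated equivalences.

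There is essentially no obstacle here; the proof is an elementary algebraic consequence of Lemma \ref{04lem:QEC(K1+H) for regular H} together with the trivial bound $\frac{2n-\kappa}{n+1}<2$ for $\kappa\ge0$. The only cosmetic care needed is to verify the strict inequality so that the boundary case $\mathrm{QEC}(K_1+H)=0$ correctly corresponds to $\min\mathrm{ev}(A_H)=-2$ (appearing on both sides of the iff).
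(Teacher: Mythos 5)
Your proof is correct and follows essentially the same route as the paper's: both rest on Lemma \ref{04lem:QEC(K1+H) for regular H} together with the strict bound $(2n-\kappa)/(n+1)<2$, which forces the sign of $\mathrm{QEC}(K_1+H)$ to be governed by $-\min\mathrm{ev}(A_H)$ alone. No gaps.
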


\begin{proof}
Since $(2n-\kappa)/(n+1)<2$, from
Lemma \ref{04lem:QEC(K1+H) for regular H} we see that
$\mathrm{QEC}(K_1+H)=0$ (resp. $\mathrm{QEC}(K_1+H)>0$)
if and only if $\min\mathrm{ev}(A_H)=-2$
(resp. $\min\mathrm{ev}(A_H)<-2$).
Therefore, $\mathrm{QEC}(K_1+H)<0$
if and only if $\min\mathrm{ev}(A_H)>-2$.
\end{proof}

The following assertion supplements Theorem \ref{04thm:min formula (01)}
in the case where $H$ is a regular graph.

\begin{theorem}\label{04thm:QEC for H with minev=-2}
Let $G=(V_1,E_1)$ be a connected graph with $|V_1|\ge2$
and $H=(V_2,E_2)$ a $\kappa$-regular graph on $n=|V_2|$
vertices, where $n\ge1$ and $\kappa\ge0$.
Assume that $\min \mathrm{ev}(A_H)= -2$.
Then we have
\[
\mathrm{QEC}(G\odot H)
=\begin{cases}
\psi_{H*}^{-1}(\mathrm{QEC}(G)), & \text{if $\mathrm{QEC}(G)>0$}, \\
0, & \text{otherwise},
\end{cases}
\]
where
\begin{align*}
\psi_{H*}^{-1}(\mathrm{QEC}(G))
&=\frac{1}{2}\Big\{(n+1)\mathrm{QEC}(G)-(\kappa+2)
\\
&\qquad\qquad +\sqrt{
 ((n+1)\mathrm{QEC}(G)-(\kappa+2))^2+4(\kappa+2)\mathrm{QEC}(G)}\, \Big\}.
\end{align*}
\end{theorem}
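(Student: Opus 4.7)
The plan is to split into the two cases $\mathrm{QEC}(G)>0$ and $\mathrm{QEC}(G)\le0$, and reduce each to a result already established: Proposition \ref{04prop:preliminary formula with gamma3} for the first case and Proposition \ref{01prop:QEC=0} for the second. The two cases meet cleanly at $\mathrm{QEC}(G)=0$ because the hypothesis $\min\mathrm{ev}(A_H)=-2$ forces the crucial threshold $\psi_{H*}^{-1}(0)=0$.

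First I would unpack the hypothesis $\min\mathrm{ev}(A_H)=-2$. Since $H$ is $\kappa$-regular, Lemma \ref{04lem:main ev for regular graphs} gives $\mathrm{ev}_M(A_H)=\{\kappa\}$, so $\mathrm{ev}_M(A_H)\cup\{-2\}=\{-2,\kappa\}$ (with $\kappa\ge0$), and therefore $\alpha_1=-2$. By Proposition \ref{04prop:inverse of psi_H} we then have $\psi_{H*}^{-1}(0)=-\alpha_1-2=0$. Also, from \eqref{04eqn:max Gamma3},
\[
\gamma_3\le -\min\mathrm{ev}(A_H)-2=0.
\]

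For the case $\mathrm{QEC}(G)>0$, I would apply Proposition \ref{04prop:preliminary formula with gamma3}. Since $\psi_{H*}^{-1}$ is strictly increasing and $\psi_{H*}^{-1}(0)=0$, we get $\psi_{H*}^{-1}(\mathrm{QEC}(G))>0$, hence
\[
-2-\psi_{H*}^{-1}(\mathrm{QEC}(G))<-2=\min\mathrm{ev}(A_H),
\]
so condition \eqref{03eqn:condition by not in ev(AH)} is satisfied. Proposition \ref{04prop:preliminary formula with gamma3} then yields $\mathrm{QEC}(G\odot H)=\max\{\psi_{H*}^{-1}(\mathrm{QEC}(G)),\gamma_3\}$, and combining $\gamma_3\le 0<\psi_{H*}^{-1}(\mathrm{QEC}(G))$ gives the asserted formula. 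The explicit formula for $\psi_{H*}^{-1}$ is just \eqref{03eqn:psi_H* inverse for regular graphs} specialized at $\lambda'=\mathrm{QEC}(G)$.

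For the case $\mathrm{QEC}(G)\le0$, I would invoke Proposition \ref{04prop:QEC(K1+H)=0 case}: since $\min\mathrm{ev}(A_H)=-2$, both inequalities there hold, giving $\mathrm{QEC}(K_1+H)=0$. Then Proposition \ref{01prop:QEC=0} (case (ii) if $\mathrm{QEC}(G)<0$, either case if $\mathrm{QEC}(G)=0$) yields $\mathrm{QEC}(G\odot H)=0$ directly.

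There is no serious obstacle: every piece needed has already been set up, and the role of the hypothesis $\min\mathrm{ev}(A_H)=-2$ is exactly to make the threshold $\psi_{H*}^{-1}(0)=0$ coincide with the sharp upper bound $\gamma_3\le 0$, so that the two regimes $\mathrm{QEC}(G)>0$ and $\mathrm{QEC}(G)\le0$ glue together without gap. The only mild subtlety is ensuring that condition (i) of Proposition \ref{04prop:preliminary formula with gamma3} holds in the first case without extra hypothesis, and this is handled by the strict inequality $-2-\psi_{H*}^{-1}(\mathrm{QEC}(G))<-2\le\min\mathrm{ev}(A_H)$.
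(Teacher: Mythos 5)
Your proposal is correct and follows essentially the same route as the paper: the paper handles the case $\mathrm{QEC}(G)>0$ by citing Theorem \ref{04thm:min formula (03)} (whose proof is exactly your inlined argument via Proposition \ref{04prop:preliminary formula with gamma3} and the bound $\gamma_3\le-\min\mathrm{ev}(A_H)-2$), and the case $\mathrm{QEC}(G)\le0$ via Propositions \ref{04prop:QEC(K1+H)=0 case} and \ref{01prop:QEC=0}, just as you do.
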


\begin{proof}
First assume that $\mathrm{QEC}(G)>0$.
Then we have $\psi_{H*}^{-1}(\mathrm{QEC}(G))>0$ and 
\[
-2-\psi_{H*}^{-1}(\mathrm{QEC}(G))<-2
=\min \mathrm{ev}(A_H).
\]
Applying Theorem \ref{04thm:min formula (03)}, we obtain
$\mathrm{QEC}(G\odot H)=\psi_{H*}^{-1}(\mathrm{QEC}(G))$,
of which the explicit expression is given by
\eqref{03eqn:psi_H* inverse for regular graphs}.
Next assume that $\mathrm{QEC}(G)\le0$.
We then have $\mathrm{QEC}(K_1+H)=0$
by Proposition \ref{04prop:QEC(K1+H)=0 case},
and hence $\mathrm{QEC}(G\odot H)=0$
by Proposition \ref{01prop:QEC=0}.
\end{proof}

\begin{remark}\normalfont
The graphs $H$, in particular regular ones, with $\min\mathrm{ev}(A_H)>-2$
or with $\min\mathrm{ev}(A_H)=-2$ have been extensively
studied, see \cite{Cvetkovic-Rowlinson-Simic2004} and references sited therein.
Moreover, strongly regular graphs $H$ with 
$\min\mathrm{ev}(A_H)\ge -2$ are completely classified,
see e.g., \cite{Brouwer-Cohen-Neumaier1989,Brouwer-Haemers2012}.
For those graphs $\mathrm{QEC}(G\odot H)$ is
explicitly written down with the help of
Theorem \ref{04thm:QEC for H with minev=-2}.
\end{remark}

%%%%%%%%%%%%%%%%%%%%%%%%%%%%%%%%%%%%%%%%%%
%%%%%%%%%%%%%%%%%%%%%%%%%%%%%%%%%%%%%%%%%%
\section{Examples}

\begin{example}[$G\odot \bar{K}_n$]
\label{05ex:H=empty graph}
\normalfont
Let $G$ be a connected graph on two or more vertices,
and let $H=\bar{K}_n$ an empty graph on $n\ge1$ vertices.
Note that $H$ is a $\kappa$-regular graph with $\kappa=0$,
and that $A_H=O$ and $\mathrm{ev}(A_H)=\mathrm{ev}_M(A_H)=\{0\}$.
By simple algebra we obtain
\begin{align}
\omega_H(\lambda)
&=1+\frac{n\lambda}{2+\lambda}
=\frac{(n+1)\lambda+2}{2+\lambda}\,,
\quad
\lambda_*=-\frac{2}{n+1}\,,
\nonumber \\[6pt]
\psi_H(\lambda)
&=\frac{\lambda}{\omega(\lambda)}
=\frac{\lambda(\lambda+2)}{(n+1)\lambda+2}\,.
\nonumber \\[6pt]
\psi_{H*}^{-1}(\lambda^\prime)
&=\frac{(n+1)\lambda^\prime-2
  +\sqrt{((n+1)\lambda^\prime-2)^2+8\lambda^\prime}}{2}\,.
\label{03eqn:psi_H* inverse for empty graph}
\end{align}
Since $\mathrm{QEC}(G)\ge-1$, we have
\[
\lambda_*=-\frac{2}{n+1}
<\psi_{H*}^{-1}(-1)\le \psi_{H*}^{-1}(\mathrm{QEC}(G))
\]
and hence
\[
-\psi_{H*}^{-1}(\mathrm{QEC}(G)))-2
<-\frac{2n}{n+1}.
\]
Since $\mathrm{ev}(A_H)=\{0\}$, we have
$-\psi_{H*}^{-1}(\mathrm{QEC}(G)))-2\notin\mathrm{ev}(A_H)$.
Moreover, by $\min\mathrm{ev}(A_H)=0$ we obtain
\[
-2-\min\mathrm{ev}(A_H)\le -\frac{\kappa+2}{n+1}\,.
\]
Thus, the assumptions 
in Theorem \ref{04thm:main formula for regular graph}
are fulfilled and we conclude that
\[
\mathrm{QEC}(G\odot \bar{K}_n)
=\psi_{H*}^{-1}(\mathrm{QEC}(G)),
\]
where $\psi_{H*}^{-1}(\lambda)$ is explicitly given by
\eqref{03eqn:psi_H* inverse for empty graph}.
Thus, the formula in \cite[Theorem 1.6]{Choudhury-Nandi2023}
is reproduced,
where we note that the cluster graph
(comb graph) of $G$ and the complete bipartite graph $K_{1,n}$
is nothing else but the corona graph $G\odot \bar{K}_n$.
\end{example}

\begin{example}[double star]
\label{05ex:H=double star}
\normalfont
The corona graph $K_2\odot \bar{K}_n$ is called a \textit{double star}.
As an immediate consequence of
Example \ref{05ex:H=empty graph} with $\mathrm{QEC}(K_2)=-1$,
we have
\[
\mathrm{QEC}(K_2\odot \bar{K}_n)
=\frac{-(n+3)+\sqrt{(n+3)^2-8}}{2}\,.
\]
This is a reproduction of \cite[Theorem 1.5]{Choudhury-Nandi2023}.
\end{example}

\begin{example}[bearded complete graph]
\label{05ex:bearded complete graph}
\normalfont
For $m\ge2$ the corona graph $K_m\odot K_1$ is a 
bearded complete graph $BK_{m,n}$ with $m=n$.
As an immediate consequence of
Example \ref{05ex:H=empty graph} with $\mathrm{QEC}(K_m)=-1$,
we have
\[
\mathrm{QEC}(K_m\odot K_1)
=-2+\sqrt2\,.
\]
This is a special case of \cite[Theorem 4.3]{Baskoro-Obata2021},
where $\mathrm{QEC}(BK_{n,m})=-2+\sqrt2=\mathrm{QEC}(P_4)$ is shown
for any $2\le m\le n$.
\end{example}

\begin{example}[$G\odot pK_q$]
\label{05ex:H=pKq}\normalfont
Let $G$ be a connected graph on two or more vertices.
For $p\ge1$ and $q\ge2$
we consider $H=pK_q$, that is, the disjoint union of 
$p$ copies of the complete graph $K_q$.
Note that the case of $q=1$ falls into Example
\ref{05ex:H=empty graph}.
Obviously, $H$ is a $\kappa$-regular graph on $n=pq$ vertices
with degree $\kappa=q-1$,
As is seen easily, we have
$\mathrm{ev}(A_H)=\{-1,q-1\}$ and
$\mathrm{ev}_{\mathrm{M}}(A_H)=\{q-1\}$.
Then, by simple algebra we have
\begin{align*}
\omega_H(\lambda)
&=1+\frac{n\lambda}{\kappa+2+\lambda}
=\frac{(n+1)\lambda+\kappa+2}{\kappa+2+\lambda}\,,
\qquad \lambda_*=-\frac{\kappa+2}{n+1}\,,
\\[6pt]
\psi_H(\lambda)
&=\frac{\lambda}{\omega(\lambda)}
=\frac{\lambda(\lambda+\kappa+2)}{(n+1)\lambda+\kappa+2}\,,
\\[6pt]
\psi_{H*}^{-1}(\lambda^\prime)
&=\frac{
(n+1)\lambda^\prime-(\kappa+2)
+\sqrt{((n+1)\lambda^\prime-(\kappa+2))^2
 +4(\kappa+2)\lambda^\prime}}{2}\,.
\end{align*}
We will examine two conditions 
in Theorem \ref{04thm:main formula for regular graph}.
Since $\mathrm{QEC}(G)\ge-1$,
\[
\lambda_*=-\frac{\kappa+2}{n+1}
<\psi_{H*}^{-1}(-1)\le \psi_{H*}^{-1}(\mathrm{QEC}(G)).
\]
Moreover, using $\kappa=q-1$ and $n=pq$, we have
\[
\lambda_*=-\frac{\kappa+2}{n+1}
=-\frac{q+1}{pq+1}\ge-1.
\]
Hence $\psi_{H*}^{-1}(\mathrm{QEC}(G))>-1$ and 
$-2-\psi_{H*}^{-1}(\mathrm{QEC}(G))<-1$,
which implies that
$-2-\psi_{H*}^{-1}(\mathrm{QEC}(G))\not\in\mathrm{ev}(A_H)$.
Moreover, we have 
\[
-2-\min\mathrm{ev}(A_H)=-1
\le -\frac{\kappa+2}{n+1}.
\]
Thus, two conditions in
Theorem \ref{04thm:main formula for regular graph}
are fulfilled and we conclude that
\[
\mathrm{QEC}(G\odot pK_q)
=\psi_{H*}^{-1}(\mathrm{QEC}(G)).
\]
In an explicit form we have
\begin{align*}
\mathrm{QEC}(G\odot pK_q)
&=\frac{1}{2} \Big\{(pq+1)\mathrm{QEC}(G)-(q+1)
\\
&\qquad 
+\sqrt{((pq+1)\mathrm{QEC}(G)-(q+1))^2+4(q+1)\mathrm{QEC}(G)}\,\Big\}.
\end{align*}
This formula is valid for all $p\ge1$ and $q\ge1$, 
see Example \ref{05ex:H=empty graph}.
\end{example}

\begin{example}[$G\odot K_n$]
\label{05ex:H=Kn}\normalfont
Specializing parameters in Example \ref{05ex:H=pKq} as $p=1$
and $q=n\ge1$, we obtain
\begin{align*}
\mathrm{QEC}(G\odot K_n)
&=\frac{1}{2} \Big\{(n+1)\mathrm{QEC}(G)-(n+1)
\\
&\qquad 
+\sqrt{((n+1)\mathrm{QEC}(G)-(n+1))^2+4(n+1)\mathrm{QEC}(G)}\,\Big\}.
\end{align*}
This is a reproduction of \cite[Theorem 1.6]{Choudhury-Nandi2023},
since the cluster graph (comb graph) of $G$ and $K_{n+1}$
is nothing else but the corona graph $G\odot K_n$.
\end{example}

\begin{example}[$G\odot pC_q$]
\label{05ex:H=pCq}\normalfont
Let $G$ be a connected graph on two or more vertices.
For $p\ge1$ and $q\ge3$
we consider $H=pC_q$, that is, the disjoint union of 
$p$ copies of a cycle $C_q$.
Note that $H$ is a $\kappa$-regular graph on $n$ vertices
with $\kappa=2$ and $n=pq$.
Obviously, $\mathrm{ev}_M(H)=\{2\}$ and after simple algebra we obtain
\begin{align}
\omega_H(\lambda)
&=1+\frac{n\lambda}{\kappa+2+\lambda}
 =\frac{(n+1)\lambda+4}{\lambda+4}\,,
\qquad
\lambda_*=-\frac{4}{n+1}\,,
\nonumber \\[6pt]
\psi_H(\lambda)
&=\frac{\lambda}{\omega_H(\lambda)}
=\frac{\lambda(\lambda+4)}{(n+1)\lambda+4}\,,
\nonumber \\[6pt]
\psi_{H*}^{-1}(\lambda^\prime)
&=\frac{(n+1)\lambda^\prime-4+\sqrt{
((n+1)\lambda^\prime-4)^2+16\lambda^\prime}}{2}\,.
\label{05eqn:in Ex 5.6}
\end{align}
Here we assume that $q\ge4$ is even.
Then, noting that $\min\mathrm{ev}(A_H)=-2$,
we apply Theorem \ref{04thm:QEC for H with minev=-2} to obtain
\[
\mathrm{QEC}(G\odot H)
=\begin{cases}
\psi_{H*}^{-1}(\mathrm{QEC}(G)), & \text{if $\mathrm{QEC}(G)>0$}, \\
0, & \text{otherwise}.
\end{cases}
\]
Finally, using \eqref{05eqn:in Ex 5.6}, we obtain
\begin{align*}
\mathrm{QEC}(G\odot pC_q)
&=\frac{1}{2} \Big\{(pq+1)\mathrm{QEC}(G)-4
\\
&\qquad 
+\sqrt{((pq+1)\mathrm{QEC}(G)-4)^2+16\mathrm{QEC}(G)}\,\Big\}
\end{align*}
for $\mathrm{QEC}(G)\ge0$, and
$\mathrm{QEC}(G\odot pC_q)=0$ otherwise.
For $q\ge3$ being odd, the situation become more complicated
and the discussion will appear elsewhere.
\end{example}

In all of the above examples, the corona graphs
$G\odot H$ are considered with $H$ being a regular graph.
The next example treats the case in which $H$ is not regular.

\begin{example}\normalfont
For $p_1\ge1$, $p_2\ge1$ and $1\le q_1<q_2$
we consider $H=p_1K_{q_1}\cup p_2K_{q_2}$,
which is the disjoint union of $p_1$ copies of $K_{q_1}$ and
$p_2$ copies of $K_{q_2}$.
We have
\[
\mathrm{ev}(A_H)=\{-1,\,q_1-1,\,q_2-1\},
\qquad
\mathrm{ev}_M(A_H)=\{q_1-1,\,q_2-1\},
\]
and hence $\min\mathrm{ev}(A_H)=-1$ and
\[
\mathrm{ev}_{\mathrm{M}}(A_H)\cup \{-2\}
=\{\alpha_1<\alpha_2<\alpha_3\}
=\{-2<q_1-1<q_2-1\}.
\]
Moreover, by simple algebra we obtain
\begin{align*}
\omega_H(\lambda)
&=1+\frac{p_1q_1\lambda}{\lambda+q_1+1}
 +\frac{p_2q_2\lambda}{\lambda+q_2+1}
\\[6pt]
&=\frac{(n+1)\lambda^2
  +(n+(p_1+p_2)q_1q_2+q_1+q_2+2)\lambda
  +(q_1+1)(q_2+1)}
  {(\lambda+q_1+1)(\lambda+q_2+1)},
\\[6pt]
\psi_H(\lambda)
&=\frac{\lambda}{\omega_H(\lambda)}
\\[6pt]
&=\frac{\lambda(\lambda+q_1+1)(\lambda+q_2+1)}
  {(n+1)\lambda^2
  +(n+(p_1+p_2)q_1q_2+q_1+q_2+2)\lambda
  +(q_1+1)(q_2+1)},
\end{align*}
where $n=p_1q_1+p_2q_2$.
By elementary examination of the quadratic polynomial
in the numerator of $\omega_H(\lambda)$ we see that
$-1<\lambda_*<0$,
where $\lambda_*$ is the largest zero of $\omega_H(\lambda)$.
Since $\psi_{H*}^{-1}(\lambda^\prime)>\lambda_*$
for any $\lambda^\prime\in\mathbb{R}$, 
for any connected graph $G$ on two or more vertices
we have
\[
\psi_{H*}^{-1}(\mathrm{QEC}(G))>\lambda_*>-1,
\]
and hence
\[
-2-\psi_{H*}^{-1}(\mathrm{QEC}(G))<-1
=\min\mathrm{ev}(A_H).
\]
Thus, condition \eqref{04eqn:condition for min ev(AH)} is
fulfilled in Theorem \ref{04thm:min formula (03)}
and we conclude that
\[
\mathrm{QEC}(G\odot (p_1K_{q_1}\cup p_2K_{q_2}))
=\psi_{H*}^{-1}(\mathrm{QEC}(G)),
\]
that is,
$\mathrm{QEC}(G\odot (p_1K_{q_1}\cup p_2K_{q_2}))$
coincides with the largest solution to
$\psi_H(\lambda)=\mathrm{QEC}(G)$,
which is reduced to a cubic equation.
There are cases where the QEC 
becomes an algebraic number of a quadratic field,
for example, the case where $\mathrm{QEC}(G)=1/(p_1+p_2-1)$.
\end{example}

\begin{remark}\normalfont
It follows from Example \ref{05ex:H=pKq} that
\begin{align*}
&\mathrm{QEC}(K_2\odot K_2)=-3+\sqrt6\approx -0.5505\cdots,
\\
&\mathrm{QEC}(K_2\odot 2K_1)=\frac{-5+\sqrt{17}}{2}
\approx -0.4384\cdots,
\end{align*}
both of which are greater than $\mathrm{QEC}(K_2\odot K_1)
=-2+\sqrt2\approx -0.5857\cdots$.
For any $G$ and $H$ with $|V_1|\ge2$ and $|V_2|\ge2$,
we see that $G\odot H$ contains $K_2\odot K_2$ or $K_2\odot 2K_1$ as
an isometric subgraph.
Therefore, $\mathrm{QEC}(G\odot H)>-2+\sqrt{2}$.
In other words, $\mathrm{QEC}(G\odot H)= -2+\sqrt{2}$ holds
if and only if $G\cong K_2$ and $H\cong K_1$,
that is, $G\odot H \cong P_4$.
In this connection, see also \cite{Baskoro-Obata2021},
where all connected graphs $G$ on two or more vertices
with $\mathrm{QEC}(G)<-2+\sqrt2$ are classified.
\end{remark}

%%%%%%%%%%%%%%%%%%%%%%%%%%%%%%%%%%%%%%%%%%%

\end{document}